\documentclass[12pt,reqno]{amsart}

\usepackage{graphics}
\usepackage{latexsym}
\usepackage[dvips]{epsfig}
\usepackage{color}
\usepackage{graphicx}
\usepackage{amsmath}
\usepackage{amssymb}
\usepackage{amsfonts}
\usepackage{eufrak}
\usepackage{amstext}
\usepackage{amsopn}
\usepackage{amsbsy}
\usepackage{amscd}
\usepackage{amsxtra}
\usepackage{amsthm}
\usepackage{bm}
\usepackage{CJK}

\newcommand{\R}{\mathbb{R}}

\newcommand{\vect}[2]{\ensuremath{\left( \begin{array}{c}
            #1 \\
            #2
            \end{array}
        \right)}}

\def\theequation{\thesection.\@arabic\c@equation}
\makeatother

\renewcommand{\theequation}{\thesection.\arabic{equation}}
\newtheorem{lemma}{Lemma}[section]

\newtheorem{proposition}{Proposition}[section]
\newtheorem{corollary}{Corollary}[section]

\newtheorem{theorem}{Theorem}[section]

\newcommand{\be}{\begin{equation}}
\newcommand{\ben}{\begin{equation*}}
\newcommand{\ee}{\end{equation}}
\newcommand{\een}{\end{equation*}}
\newcommand{\BL}{\begin{lemma}}
\newcommand{\EL}{\end{lemma}}
\newcommand{\BT}{\begin{theorem}}
\newcommand{\ET}{\end{theorem}}
\newcommand{\BP}{\begin{proposition}}
\newcommand{\EP}{\end{proposition}}
\newcommand{\BC}{\begin{corollary}}
\newcommand{\EC}{\end{corollary}}
\def\bs{\begin{split}}
\def\es{\end{split}}

\title[Infinitely many solutions]{Infinitely many positive solutions for nonlinear equations with non-symmetric potential}

\author{Weiwei Ao}
\address{Department of Mathematics, Chinese University of Hong Kong, Shatin, Hong Kong.  {\sl wwao@math.cuhk.edu.hk}}
\author{ Juncheng Wei}
\address{Department of Mathematics, Chinese University of Hong Kong, Shatin, Hong Kong,and
Department of Mathematics, University of British Columbia, Vancouver, B.C., Canada, V6T 1Z2.
{\sl wei@math.cuhk.edu.hk}}

\begin{document}

\maketitle
\begin{abstract}
We consider the following nonlinear Schrodinger equation
\[
\left\{\begin{array}{l}
\Delta u-(1+\delta V)u+f(u)=0 \ \ \mbox{ in }\R^N,\\
u>0 \ \mbox{in} \ \R^N, u\in H^1(\R^N)
\end{array}
\right.
\]
where $V$ is a potential satisfying some decay condition and $ f(u)$ is a superlinear nonlinearity satisfying some nondegeneracy condition. Using localized energy method, we prove that there exists some $\delta_0$ such that for $0<\delta<\delta_0$, the above problem  has infinitely many positive solutions. This  generalizes and gives a new proof of the results by Cerami-Passaseo-Solimini \cite{CPS}. The new techniques allow us to establish the existence of infinitely many positive bound states for elliptic systems.
\end{abstract}
\setcounter{equation}{0}

\section{Introduction}\label{1}
In this paper, we consider nonlinear Schrodinger equations and systems with non-symmetric potentials. We are interested in the multiplicity of positive solutions.
\subsection{Nonlinear Schrodinger equation with non-symmetric potential }\label{1.1}

We first consider the following equation:
\begin{equation}\label{p-1}
\left\{\begin{array}{l}
\Delta u-(1+\delta V(x))u+f(u)=0 \ \ \mbox{ in }\R^N\\
u>0 \ \mbox{in} \ \R^N,\ \ \ u\in H^1(\R^N)
\end{array}
\right.
\end{equation}
where $N\geq 2$ , $\delta$ is a positive constant and the potential $V$ is a continuous function satisfying suitable decay assumption, but without any symmetry. We are interested in the existence of {\em infinitely many} positive solutions of equation (\ref{p-1}).

Equation (\ref{p-1}) arises in the study of solitary waves in nonlinear equations of the Klein-Gordon or Schrodinger type and has been under extensive studies in recent years.

Consider the following problem first
\begin{equation}\label{p1}
\Delta u-V(x)u+f(u)=0, u>0 \ \ \mbox{ in }\R^N, \ \ u\in H^1(\R^N).
\end{equation}
If $f$ satisfies global Ambrosetti-Rabinowtiz condition and $V$ satisfies
\begin{equation}\label{p200}
\inf_{y \in \R^N} V(y) < \lim_{|x|\to\infty} V(x),
\end{equation}
then, using the concentration compactness principle \cite{L1,L2},
one can show that \eqref{p1}  has a
least energy solution. See for example \cite{DN,L1,L2,R}.

 But if \eqref{p200} does not hold, \eqref{p1} may not have least energy solution. So, one needs
to find solution with higher energy level. For results in this direction, the readers can refer to \cite{BL,BLi}.

On the other hand, if we consider the following semi-classical
problem:
\begin{equation}\label{2-18-4}
\varepsilon^2\Delta u - V(y) u + u^p=0, \quad u>0, \quad \lim_{|y|\to+\infty}
u(y)=0,
\end{equation}
where $\varepsilon>0$ is a small parameter and $p$ subcritical, then the number of the
critical points of $V(y)$ (see for example
\cite{ABC, O},\cite{DF1}--\cite{DF4},\cite{FW,O,W}), the type
of the critical points of $V(y)$ (see for example
\cite{BT1,KW,NY}, and the topology of the level set of $V(y)$
 \cite{AMN1, AMN2,BT2, FM}, can affect the number of the
solutions for \eqref{2-18-4}. The construction of single and multiple spikes in degenerate setting is done by Byeon-Tanaka \cite{BT1, BT2}.  In particular, we mention the following multiplicity result due to Kang-Wei \cite{KW} (see \cite{BT1} for general $f(u)$): If $V(x)$ has a local maximum point, then for any fixed integer $K$, there exists $ \epsilon_K>0$ such that for $\epsilon <\epsilon_K$ there are solutions with $K$ spikes. So for the
singularly perturbed problems \eqref{2-18-4}, the
parameter $\varepsilon$ will tend to zero as the number of the
solutions tends to infinity. Thus all these results do not give
multiplicity result for  \eqref{p1}.

About the existence of infinitely many positive solutions, Coti-Zelati and Rabinowitz  \cite{CR1, CR2} first proved the existence of arbitrarily bumps (hence infinitely many solutions) for (\ref{p1}) when $V$ is a periodic function in $\R^N$. (See Sere \cite{S} for related work on Hamiltonian systems.) As far as we know, without periodicity nor smallness of the parameters, the first result on the existence of infinitely many positive solutions was due to Wei-Yan \cite{WY}. (Another variational proof was given in \cite{DS}.) They proved the existence of infinitely many non radial positive bump solutions for (\ref{p1}) under the following assumption at infinity
$$ V(x)=V(|x|)= V_{\infty}+\frac{a}{|x|^m} + O(\frac{1}{|x|^{m+\sigma}}).$$

In a recent remarkable paper \cite{CPS},  Cerami-Passasseo-Solimini developed a localized Nehari's manifold argument  and  localized variational method to prove the existence of infinitely many positive solutions of the following equation
\begin{equation}\label{cps}
\left\{\begin{array}{l}
\Delta u-(1+\delta V)u+u^p=0 \ \ \mbox{ in }\R^N\\
u>0 \ \mbox{in} \ \R^N, \ u\in H^1(\R^N)
\end{array}
\right.
\end{equation}
where the potential $V$ satisfies suitable decay assumption (see below (H1)-(H2)).

The purpose of the first part of this paper is  two folds. Firstly, we want to generalize the results of \cite{CPS} for more general nonlinearity, i.e, we consider a more general equation (\ref{p-1}).
 Secondly, we will give another proof of the results of \cite{CPS}, in the spirit of Liapunov-Schmidt reduction.

In Section \ref{2}, we  assume that $f: \R\to \R$ satisfies the following two conditions:
\begin{itemize}
\item[$(f_1)$]
$f: \R\to \R$ is of class $C^{1+\sigma}$ for some $0<\sigma\leq 1$ and $f(u)=0$ for $u\leq 0$.

\item[$(f_2)$]
The equation
\begin{equation}\label{nondeg}
\left\{\begin{array}{l}
\Delta w-w+f(w)=0, \ w>0 \mbox{ in }\ \R^N\\
w(0)=\max_{y\in \R^N}w(y), \ w\to 0 \mbox{ as } |y|\to \infty
\end{array}
\right.
\end{equation}
has a  nondegenerate solution $w$, i.e.,
\begin{equation}
\operatorname{Ker}(\Delta -1+f'(w))=\operatorname{Span}\{\frac{\partial w}{\partial y_1},\cdots, \frac{\partial w}{\partial y_N}\}.
\end{equation}

\end{itemize}

We note that the function
\begin{equation}
f(t)=t^p-at^q, \ \mbox{ for } t\geq 0
\end{equation}
with a constant $a\geq 0$ satisfies the above assumptions $(f_1)-(f_2)$ if $1<q<p<(\frac{N+2}{N-2})_+$. Nondegeneracy is a generic condition.  We should remark that there do exist nonlinearities with degenerate ground states; the first example seems to be given by Dancer \cite{D}. See also Polacik \cite{P}.

Under the above assumptions, we know that there exists a unique positive eigenvalue of the operator $\Delta-1+f'(w)$. That is, there exists an unique eigenvalue $\lambda_1>0$ and its corresponding eigenfunction $\Phi_0$ (which can be made positive and radially symmetric) satisfying
\begin{equation}
\Delta \Phi_0-\Phi_0+f'(w)\Phi_0=\lambda_1\Phi_0, \ \Phi_0\in H^1(\R^N).
\end{equation}
This function will play important role in our secondary Liapunov-Schmidt reduction (see Section \ref{2.2} below).

The energy functional associated with (\ref{p-1}) is
\begin{equation}
J(u)=\frac{1}{2}\int_{\R^N}|\nabla u|^2+(1+\delta V)u^2 dx-\int_{\R^N}F(u)dx,
\end{equation}
where $F(u)=\int_0^uf(s)ds$.

Let us now introduce the assumptions on $V(x)$ (similiar to \cite{CPS})
\begin{equation}
\left\{\begin{array}{l}
(H1)\ \ V(x)\to 0 \ \ \mbox{ as } |x|\to \infty,\\
(H2)\ \ \ \exists \ \ 0<\bar{\eta}<1, \lim_{|x|\to \infty}V(x)\ e^{\bar{\eta}|x|}=+\infty.\\
\end{array}
\right.
\end{equation}

We now state the main theorem in this paper:

\begin{theorem}\label{theo1}
Let $f$ satisfies assumption $(f_1)-(f_2)$, the potential $V$ satisfies assumption $(H1)-(H2)$. Then there exists a positive constant $\delta_0$, such that for $0<\delta<\delta_0$, problem (\ref{p-1}) has infinitely many positive solutions.
\end{theorem}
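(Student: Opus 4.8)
The plan is to construct solutions by a Lyapunov--Schmidt reduction, concentrating the mass of the solution into a large number $k$ of well-separated bumps, each of which is a translate of the ground state $w$. Since $V$ is not symmetric, the locations of the bumps cannot be prescribed a priori by symmetry (as in \cite{WY}); instead they must be determined as an equilibrium of an effective finite-dimensional energy. The key mechanism, as in \cite{CPS}, is that the decay hypotheses $(H1)$--$(H2)$ force the potential term $\delta V$ to be negligible compared with the interaction energy between bumps as long as the bumps are placed far enough out and far enough from each other; this gives the necessary compactness to iterate and add one more bump.

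Concretely, I would proceed as follows. \textbf{Step 1 (Ansatz).} Fix a configuration $\mathbf{P}=(P_1,\dots,P_k)$ of points in $\R^N$ subject to a balancing/separation constraint (roughly $|P_i-P_j|\ge L$ and $|P_i|\ge R$ for large $L,R$ to be chosen, with $k$ arbitrary), and set $W_{\mathbf{P}}=\sum_{i=1}^k w(\cdot-P_i)$. Seek a solution of \eqref{p-1} of the form $u=W_{\mathbf{P}}+\phi$ with $\phi$ small and belonging to the orthogonal complement of the approximate kernel $\operatorname{Span}\{\partial_{y_\ell} w(\cdot-P_i): 1\le i\le k,\ 1\le \ell\le N\}$. \textbf{Step 2 (Linear theory).} Using the nondegeneracy condition $(f_2)$, establish invertibility of the linearized operator $L_{\mathbf{P}}:=\Delta-(1+\delta V)+f'(W_{\mathbf{P}})$ restricted to this orthogonal complement, with a bound on the inverse that is uniform in $k$ and in admissible $\mathbf{P}$; the positive eigenvalue $\lambda_1$ and eigenfunction $\Phi_0$ enter here to control the possible loss of coercivity coming from the ``positive'' direction, which is the reason a secondary reduction is needed (Section \ref{2.2}). \textbf{Step 3 (Nonlinear reduction).} By the contraction mapping principle / implicit function theorem, solve for $\phi=\phi(\mathbf{P})$, obtaining an error estimate of the form $\|\phi(\mathbf{P})\| = O\big(\delta\,\|V W_{\mathbf{P}}\| + \sum_{i\ne j}\Psi(|P_i-P_j|)\big)$, where $\Psi(r)\sim e^{-r}r^{-(N-1)/2}$ is the interaction rate of two ground states.

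\textbf{Step 4 (Reduced energy and matching).} Plug $u=W_{\mathbf{P}}+\phi(\mathbf{P})$ into $J$ to get a reduced functional $\mathcal{J}_k(\mathbf{P})=k\,J(w)+\sum_{i\ne j}c_0\,\Psi(|P_i-P_j|)+\delta\sum_i \big(\tfrac12\int V w(\cdot-P_i)^2\big)+(\text{h.o.t.})$. The crucial point provided by $(H2)$, with the exponent $\bar\eta<1$, is that $\delta|V(P_i)|$ decays \emph{slower} than the bump--bump interaction $\Psi$, so that along the ``outgoing'' directions the potential contribution, though of favorable sign for placing a bump far out, is dominated in the relevant range, guaranteeing the existence of a critical point of $\mathcal J_k$ in the interior of the admissible configuration space (a max--min or a degree/variational argument on the separation variables, in the spirit of the localized Nehari argument of \cite{CPS}). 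Each interior critical point of $\mathcal J_k$ yields, via the reduction, a genuine positive solution of \eqref{p-1} with exactly $k$ bumps; since $k$ is arbitrary and the number of bumps distinguishes the solutions, this produces infinitely many positive solutions, completing the proof.

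\textbf{Main obstacle.} The delicate part is Step 2 together with the matching in Step 4: one must control the linearized operator \emph{uniformly in the number of bumps $k$} — so the estimates cannot lose any constant per bump — and simultaneously show that the perturbation caused by $\delta V$ neither destroys the invertibility nor overwhelms the inter-bump interaction that pins down the configuration. Handling the sign-indefinite, non-symmetric potential in the reduced energy (ensuring a critical point survives in the interior of the constraint region rather than escaping to the boundary, i.e. bumps colliding or running to infinity) is where the hypotheses $(H1)$--$(H2)$, and in particular the precise exponential rate $\bar\eta<1$, must be used most carefully; this is also where the secondary Lyapunov--Schmidt reduction using $\Phi_0$ will be needed to absorb the a priori uncontrolled direction.
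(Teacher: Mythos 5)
Your proposal follows essentially the same route as the paper: a Liapunov--Schmidt reduction with estimates uniform in the number $k$ of bumps, a secondary reduction (which is where $\Phi_0$ actually enters, to restore coercivity of the quadratic form and control the accumulated error from step $k$ to step $k+1$), and a reduced finite-dimensional variational problem in which the slow decay $(H2)$ pins the configuration. The paper's implementation of your Step 4 is an induction on $k$ via pure maximization, proving $\mathcal{C}_{k+1}>\mathcal{C}_k+I(w)$ by placing the $(k+1)$-st bump far out where the potential gain $\delta e^{-\bar\eta|Q_{k+1}|}$ \emph{dominates} the interaction loss $e^{-\eta|Q_{k+1}-Q_i|}$ (your phrasing inverts this), while on the collision boundary the interaction loss $w(\rho)$ dominates the $O(\delta)$ potential term, forcing an interior maximizer.
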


 In the following we sketch the main steps in the proof of Theorem \ref{theo1}.

\subsection{Sketch of the proof of Theorem \ref{theo1}}\label{1.3}

 We introduce some notations first. Let $\rho>0$ be a real number such that $w(x)\leq ce^{-|x|} $ for $|x|>\rho$ and some constant $c$ independent of $\rho$ large. Now we define the configuration space,
\begin{equation}
\label{lambda}
\Lambda_1=\R^N,\ \ \Lambda_k:= \{(Q_1,\cdots,Q_k)\in \R^N|\min_{i\neq j}|Q_i-Q_j|\geq \rho\} , \forall k>1.
\end{equation}

Let $w$ be the nondegenerate solution of (\ref{nondeg}) and $k \geq 1$ be an integer.  Define the sum of $k$ spikes as
\begin{equation}
w_{Q_i}=w(x-Q_i), \mbox{ and }\ \ w_{Q_1, \cdots, Q_k}=\sum_{i=1}^kw_{Q_i}.
\end{equation}

Let the opertaor be
\begin{equation}
S(u)=\Delta u-(1+\delta V)u+f(u).
\end{equation}

Fixing $\mathbf{Q}_k=(Q_1,\cdots, Q_k)\in \Lambda_k$, we define the following functions as the approximate kernels:
\begin{equation}\label{zij}
Z_{ij}=\frac{\partial w_{Q_i}}{\partial x_j}\chi_i(x), \mbox{ for } i=1,\cdots,k, \ j=1,\cdots,N,
\end{equation}
where $w_{Q_i}(x)=w(x-Q_i)$, $\chi_i(x)=\chi(\frac{2| x-Q_i|}{(\rho-1)})$ and $\chi(t)$ is a cut off function , such that $\chi(t)=1$ for $|t|\leq 1$ and $\chi(t)=0$ for $|t|\geq \frac{\rho^2}{\rho^2-1}$. Note that the support of $Z_{ij}$ belongs to $B_{\frac{\rho^2}{2(\rho+1)}}(Q_i)$.

Using  $w_{Q_1,\cdots, Q_k}$ as the approximate solution and performing  the Liapunov-Schmidt reduction, we can show that there exists a constant $\rho_0$, such that for $\rho\geq\rho_0$, and $\delta<c_\rho$, for some constant $c_\rho$ depend on $\rho$ but independent of $k$ and $\mathbf{Q}_k$, we can find a $\phi_{\mathbf{Q}_k}$ such that
\begin{equation}
S(w_{Q_1,\cdots,Q_k}+\phi_{\mathbf{Q}_k})=\sum_{i=1,\cdots,k,j=1,\cdots,N}c_{ij}Z_{ij},
\end{equation}
and we can show that $\phi_{\mathbf{Q}_k}$ is $C^1$ in $\mathbf{Q}_k$. This is done is Section \ref{2.1}.

After that, for any $k$, we define a new function
\begin{equation}
\mathcal{M}(\mathbf{Q}_k)=J(w_{Q_1,\cdots,Q_k}+\phi_{\mathbf{Q}_k}),
\end{equation}
we maximize $\mathcal{M}(\mathbf{Q}_k)$ over $\bar{\Lambda}_k$.

There are two main difficulties in the maximization process. First, we need to show that the maximum points will not go to infinity. This is guaranteed by the slow decay assumption on the potential $V$. Second, we have to detect the difference in the energy when the spikes move to the boundary of the configuration space.  In the second step, we use the induction method and detect the difference of the $k$-spikes energy and the $k+1$-spikes energy. A crucial estimate is Lemma \ref{lemma501}, in which we prove that the accumulated error can be controlled from step $k$ to step $k+1$. To prove this, we perform a secondary Liapunov-Schmidt reduction. This is done is Section \ref{2.2} and \ref{2.3}. Finally in Section \ref{2.4}, we give the proof of Theorem \ref{theo1}.

Unlike  the variational method and Nehari's manifold arguments in \cite{CPS}, our main idea is to use the Liapunov-Schmidt reduction method. The only assumption we need is the nondegeneracy of the bump. We have no requirements on the structure of the nonlinearity. We note that the nondegeneracy is also needed in arguments of \cite{CPS}.  Our approach is different. It handles more general nonlinearities and can be readily applied to other similar problems such as elliptic systems and magnetic Ginzburg-Landau equations (\cite{PTW}).

In the following we present the applications of our techniques to elliptic systems in which the bump can have higher Morse index.

 \subsection{Nonlinear Schrodinger system with non-symmetric potential}\label{1.2}

As we mentioned above, our approach can be applied to other problems such as elliptic system. So in this section, we apply our method to the elliptic system. We consider the following nonlinear Schrodinger system in $\R^N$ ($N\leq 3$)
\begin{equation}\label{p-2}
\left\{\begin{array}{l}
-\Delta u+(1+\delta a(x))u=\mu_1|u|^2u+\beta v^2u\\
-\Delta v+(1+\delta b(x))v=\mu_2|v|^2v+\beta u^2v
\end{array}
\right.
\end{equation}
where  $\delta$ is a constant and the potential $a(x),b(x)$ are continuous functions satisfying suitable decay assumption, but without any symmetry property.

This type of system arise when one considers the standing wave solutions of the time dependent $n-$coupled Schrodinger systems of the form with $n=2$
\begin{equation}\label{b}
\left\{\begin{array}{l}
-i\frac{\partial}{\partial t}\Phi_j=\Delta \Phi_j-V_j(x)\Phi_j+\mu_j|\Phi_j|^2\Phi_j+\Phi_j\sum_{l=1,k\neq j}^N\beta_{jk}|\Phi_l|^2, \mbox{ in } \ \R^N,\\
\Phi_j=\Phi_j(x,t)\in C, \ t>0,\ j=1,\cdots,n
\end{array}
\right.
\end{equation}
where $\mu_j$ and $\beta_{jl}=\beta_{lj}$ are constants.  The system (\ref{b}) arises in applications of many physical problems, especially in the study of incoherent solitons in nonlinear optics. Physically, the solution $\Phi_j$ denotes the $j-$th component of the beam in Kerr-like photorefractive media. The positive constant $\mu_j$ is for self-focusing in the $j-$th component of the beam. The coupling constant $\beta$ is the interaction between the first and the second component of the beam. As $\beta>0$, the interaction is attractive, while the interaction is repulsive if $\beta<0$.

%Problem (\ref{b}) also arise in the Hartee-Fock theory for a double condensate i.e. a binary mixture of Bose-Einstein condensates in two different hyperfine states $|1\rangle$ and $|2\rangle$, see \cite{EGBB,T}. Physically , $\Phi_1$ , $\Phi_2$
%are the wave functions of the corresponding condensates, $\mu_i$ and $\beta$ are the intraspecies and interspecies scattering lengths respectively. The sign of the scattering length $\beta$ determines whether the interspecies of states are repulsive or attractive. In the attractive case the components of a vector solution tend to go along with each other. And in the repulsive case, the components of a vector solution tend to segregate with each other leading to phase separations. These phenomena have been documented in experiment in numeric simulations, (e.g. \cite{MS} and reference therein).

Mathematical work on systems of nonlinear Schrodinger equations have been studied extensively in recent years, see for example \cite{BDW, CTV,LW1,NTTV, TV, WW1, WW2} and references therein. Phase separation has been proved in several cases with constant potentials such as in the work \cite{BDW, CTV, DWW, NTTV, WW1, WW2} as the coupling constant $\beta$ tends to negative infinity. In symmetric case ($a=b=0, \mu_1=\mu_2$), \cite{WW2} gives infinitely many non-radial positive solutions for $\beta\leq -1$ which are potentially segregated type. In a recent paper of Peng and Wang \cite{PW}, the authors considered the multiplicity of solutions . They proved the existence of infinitely many solutions of synchronized type to (\ref{p-2}) for radial symmetric potentials $a(|x|),b(|x|)$ satisfying some algebra decay assumption. Their proof is in the spirit of the work \cite{WY}.

The second result of this paper concerns the existence of infinitely many synchronized solutions for potentials without any symmetry assumption.

We assume that $a(x), b(x)$ satisfy the following conditions:
\begin{equation}
\left\{\begin{array}{l}
(H'_1)\ \ a(x), \ b(x)\to 0 \ \ \mbox{ as } |x|\to \infty,\ \ a(x), b(x)\geq 0 \ \ as\ \  |x|\to \infty,\\
(H'_2) \ \ \exists\  0<\bar{\eta}<1,\ \ \lim_{|x|\to \infty}(\alpha^2a(x)+\gamma^2b(x))e^{\bar{\eta}|x|}=+\infty,
\end{array}
\right.
\end{equation}
where $\alpha,\gamma$ are constants defined in (\ref{alpha}).

The energy functional associated with problem (\ref{p-2}) is
\begin{eqnarray*}
J(u,v)&=&\frac{1}{2}\int_{\R^N}|\nabla u|^2+(1+\delta a)u^2+|\nabla v|^2+(1+\delta b)v^2dx\\
&-&\frac{1}{4}\int_{\R^N}\mu_1 u^4+\mu_2 v^4dx-\frac{\beta}{2}\int_{\R^n}u^2v^2dx, \ \ u, v\in H^1(\R^N).
\end{eqnarray*}

The second result of this paper is as follows:

\begin{theorem}\label{theo2}
Let the potential $a,b$ satisfies assumption $(H'_1), (H'_2)$. Then there exists $\beta^*>0$, and  $\delta_0>0$, such that for $\beta\in (-\beta^*,0)\cup (0,\min\{\mu_1,\mu_2\})\cup (\max\{\mu_1,\mu_2\},\infty)$, and $0<\delta<\delta_0$, problem (\ref{p-2}) has infinitely many positive synchronized solutions.
\end{theorem}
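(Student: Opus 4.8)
The plan is to deduce Theorem \ref{theo2} from the machinery built for Theorem \ref{theo1}, the only genuinely new ingredient being the construction of a \emph{nondegenerate synchronized building block} for the system. First I would look for solutions of the limit system (with $\delta=0$) of the form $(u,v)=(\alpha U,\gamma U)$. Using the cubic ground state $w$ of \eqref{nondeg} with $f(t)=t^3$ (which is $H^1$-subcritical and, by Kwong--Weinstein, nondegenerate for $N\le 3$), this forces $U=w$ provided $(\alpha^2,\gamma^2)$ solves the linear system $\mu_1\alpha^2+\beta\gamma^2=1$, $\beta\alpha^2+\mu_2\gamma^2=1$, i.e. $\alpha^2=\frac{\mu_2-\beta}{\mu_1\mu_2-\beta^2}$, $\gamma^2=\frac{\mu_1-\beta}{\mu_1\mu_2-\beta^2}$; these are positive precisely when $\beta<\min\{\mu_1,\mu_2\}$ together with $\beta^2<\mu_1\mu_2$, or when $\beta>\max\{\mu_1,\mu_2\}$. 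This fixes the constants $\alpha,\gamma$ in \eqref{alpha} and already explains the $\beta$-ranges in the statement (the deleted point $\beta=0$ being where the system decouples and $(\alpha w,\gamma w)$ acquires a spurious $2N$-dimensional kernel).

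The crucial step is to show that $\mathbf{w}:=(\alpha w,\gamma w)$ is nondegenerate modulo translations. The linearization of the limit system at $\mathbf{w}$ has the form $(-\Delta+1)I-w^2M$ acting on $(\phi,\psi)$, where $M$ is the symmetric matrix with entries $M_{11}=2\mu_1\alpha^2+1$, $M_{22}=2\mu_2\gamma^2+1$, $M_{12}=M_{21}=2\beta\alpha\gamma$. A direct computation, using $\mu_1\alpha^2+\beta\gamma^2=\beta\alpha^2+\mu_2\gamma^2=1$, shows that $M$ has eigenvector $(\alpha,\gamma)$ with eigenvalue $3$ and eigenvector $(\gamma,-\alpha)$ with eigenvalue $\nu:=1+\frac{2(\mu_1-\beta)(\mu_2-\beta)}{\mu_1\mu_2-\beta^2}$; since $(-\Delta+1)I-w^2M$ commutes with the constant matrix $M$, it preserves these two one-dimensional directions. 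Along the synchronized direction it reduces to the scalar operator $-\Delta+1-3w^2$, whose kernel, by nondegeneracy of $w$, is exactly $\operatorname{Span}\{\partial_{y_j}w\}$, giving the $N$ modes $(\alpha\,\partial_{y_j}w,\gamma\,\partial_{y_j}w)$; along the orthogonal direction it reduces to $-\Delta+1-\nu w^2$. One checks $\nu\in(1,3)$ for $\beta\in(0,\min\{\mu_1,\mu_2\})$, $\nu<1$ for $\beta>\max\{\mu_1,\mu_2\}$, and $\nu>3$ with $\nu\to 3^+$ as $\beta\to 0^-$ for $\beta<0$. Now $-\Delta+1-\nu w^2$ is positive definite for $\nu<1$, has zero as lowest eigenvalue only at $\nu=1$ (eigenfunction $w$), has first eigenvalue strictly negative and second eigenvalue strictly positive for $\nu\in(1,3)$ (the second vanishing only at $\nu=3$, the translation band), and for $\nu$ just above $3$ the only eigenvalues near zero are the negative translation-band ones; hence $-\Delta+1-\nu w^2$ is invertible throughout all three admissible $\beta$-ranges once $\beta^*>0$ is chosen small enough to keep $\nu(-\beta^*)$ below the first value $\nu>3$ at which a higher eigenvalue of $-\Delta+1-\nu w^2$ touches zero (and $\beta^*<\sqrt{\mu_1\mu_2}$). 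Thus $\operatorname{Ker}$ of the full linearization at $\mathbf{w}$ is exactly $\operatorname{Span}\{(\alpha\,\partial_{y_j}w,\gamma\,\partial_{y_j}w):j=1,\dots,N\}$. Note that the orthogonal block contributes $N+1$ extra negative eigenvalues when $\beta\in(-\beta^*,0)$, one when $\beta\in(0,\min\{\mu_1,\mu_2\})$, and none when $\beta>\max\{\mu_1,\mu_2\}$, so the bump has Morse index $N+2$, $2$, or $1$ respectively — the ``higher Morse index'' phenomenon mentioned above.

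With the nondegenerate vector bump in hand, the remaining argument is a transcription of Sections \ref{2.1}--\ref{2.4}. Using the approximate $k$-spike solution $\mathbf{w}_{\mathbf{Q}_k}=\big(\alpha\sum_i w_{Q_i},\,\gamma\sum_i w_{Q_i}\big)$ and the approximate kernels $\mathbf{Z}_{ij}=(\alpha,\gamma)\,\partial_{x_j}w_{Q_i}\,\chi_i$, the first Liapunov--Schmidt reduction produces $\boldsymbol\phi_{\mathbf{Q}_k}$, $C^1$ in $\mathbf{Q}_k$, with $\mathbf{S}(\mathbf{w}_{\mathbf{Q}_k}+\boldsymbol\phi_{\mathbf{Q}_k})\in\operatorname{Span}\{\mathbf{Z}_{ij}\}$ uniformly for $\rho\ge\rho_0$, $\delta<c_\rho$; since $\alpha,\gamma>0$ and $\boldsymbol\phi_{\mathbf{Q}_k}$ is exponentially small, both components stay positive. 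One sets $\mathcal{M}(\mathbf{Q}_k)=J(\mathbf{w}_{\mathbf{Q}_k}+\boldsymbol\phi_{\mathbf{Q}_k})$ and maximizes over $\bar\Lambda_k$: the potential contribution to the reduced energy is governed by $\delta\int(\alpha^2a+\gamma^2b)w_{Q_i}^2$, which is exactly why $(H'_2)$ is imposed on $\alpha^2a+\gamma^2b$ (it prevents the maximizing spikes from escaping to infinity) and $(H'_1)$ provides the correct sign. The inductive comparison between the $k$-spike and $(k+1)$-spike maxima — the analogue of Lemma \ref{lemma501} — requires a secondary Liapunov--Schmidt reduction, now carried out against the negative directions of the orthogonal block $-\Delta+1-\nu w^2$ together with the positive eigenfunction $\Phi_0$ of $\Delta-1+3w^2$ along the synchronized direction; as all of these are order-one spectral objects, the interaction terms are the same exponentially small quantities as in the scalar proof and the accumulated error is again controlled from step $k$ to step $k+1$. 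A critical point of $\mathcal{M}$ then forces all $c_{ij}=0$, yielding a genuine positive $k$-spike synchronized solution of \eqref{p-2}; since the $k$-spike energy is comparable to $k$ times the single-bump energy, distinct $k$ give distinct solutions, hence infinitely many.

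The main obstacle is the spectral analysis of the second paragraph: computing the eigenvalue $\nu$ of the off-synchronized block and proving that $-\Delta+1-\nu w^2$ has trivial kernel throughout the admissible $\beta$-ranges — this is what simultaneously pins down the thresholds $\min\{\mu_1,\mu_2\}$ and $\max\{\mu_1,\mu_2\}$ and forces the auxiliary cutoff $\beta^*$ — and then verifying that the extra negative eigenvalues present when $\beta\in(-\beta^*,0)$ neither destroy the uniform invertibility of the linearized operator at the $k$-spike configuration nor the interaction estimate in the induction. Everything else follows the scalar argument essentially verbatim.
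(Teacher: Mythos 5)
Your proposal is correct and follows essentially the same route as the paper: use the synchronized bump $(U,V)=(\alpha w,\gamma w)$ with $\alpha,\gamma$ as in \equ{alpha} as the building block, run the Liapunov--Schmidt reduction and the maximization of the reduced energy over $\bar\Lambda_k$, control the passage from $k$ to $k+1$ spikes by a secondary reduction against the finitely many unstable directions of the vector linearization, and conclude $c_{ij}=0$ at an interior maximizer. The one place where you genuinely diverge is the nondegeneracy of the bump (Lemma \ref{lemma201s}): the paper simply cites Proposition 2.3 of \cite{PW}, whereas you prove it by diagonalizing the constant matrix $M$ in the linearization, reducing to the two scalar operators $-\Delta+1-3w^2$ (synchronized direction) and $-\Delta+1-\nu w^2$ with $\nu=1+\tfrac{2(\mu_1-\beta)(\mu_2-\beta)}{\mu_1\mu_2-\beta^2}$ (orthogonal direction), and tracking where $\nu$ sits relative to the thresholds $1$ and $3$. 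Your computation is right and has the added value of making transparent why the three $\beta$-ranges and the small cutoff $\beta^*$ appear, and of giving the exact Morse index of the bump in each range ($1$, $2$, or $N+2$), which is precisely the count of unstable directions $K$ that the paper's secondary reduction in Section \ref{3.2} works against; the paper's citation-based route is shorter but leaves these thresholds unexplained. Everything else in your outline matches the paper's Sections \ref{3.1}--\ref{3.4}.
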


The main technical difference between the scalar problem (\ref{p-1}) and the system (\ref{p-2}) is that the system has higher Morse index for the bump profile. Since we only the nondegeneracy of the bump, we can still perform the secondary Liapunov-Schmidt reduction.

\medskip

The rest of the paper is organized as follows: Theorem \ref{theo1} is proved in  Section \ref{2}. In Section \ref{3}, we give the proof of Theorem \ref{theo2}.

\medskip

Throughout this paper, unless otherwise stated, the letters $c,
C$ will always denote various generic constants that are independent of $k$ for $\delta$ small enough.

\bigskip

\noindent
{\bf Acknowledgment.}  Juncheng Wei was supported by a GRF grant from RGC of Hong Kong.

\section{Infinitely many solutions and the proof of Theorem \ref{theo1}}\label{2}
\subsection{Liapunov-Schmidt Reduction}\label{2.1}
\setcounter{equation}{0}
In this section, we use the standard Liapunov-Schmidt reduction procedure to solve problem (\ref{p-1}). Since this has become a  rather routine procedure, we omit most of the proofs. (The only part we need to pay attention to is the independence of all the coefficients on the number of spikes $k$.) We refer \cite{AWZ}, \cite{MPW} and \cite{LNW}  for technical details.

Let $ \eta \in (0,1)$ and we define
\begin{equation}
\label{Wdef}
W : =  \sum_{\mathbf{Q} \in \Lambda_k} e^{- \eta |\cdot - Q_i |}.
\end{equation}
 Consider the norm
\begin{equation}\label{e302}
 \quad \| h \|_{*} =\sup_{x \in \R^N}|   W(x)^{-1}  h(x) |
\end{equation}
where $(Q_1,\cdots,Q_k) \in \Lambda_k$ and $\Lambda_k$ is defined in (\ref{lambda}).

We first estimate the error in the above norm.

\begin{lemma}\label{lemma201}
Given $0<\eta<1$. For $\rho$ large enough, and any $\mathbf{Q}_k\in\Lambda_k$, $\delta <e^{-2\rho}$, the following estimate holds:
\begin{equation}
\|S(w_{\mathbf{Q}_k})\|_*\leq ce^{-\xi \rho},
\end{equation}
for some constant $\xi>0$ and $c$ independent of $\rho$, $k$ and $\mathbf{Q}_k$.
\end{lemma}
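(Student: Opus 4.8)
The plan is to estimate $S(w_{\mathbf{Q}_k})$ pointwise and then convert that into a bound in the weighted norm $\|\cdot\|_*$ defined in \eqref{e302}. Since each $w_{Q_i}$ solves $\Delta w_{Q_i}-w_{Q_i}+f(w_{Q_i})=0$ exactly, we compute
\[
S(w_{\mathbf{Q}_k})=\Delta w_{\mathbf{Q}_k}-(1+\delta V)w_{\mathbf{Q}_k}+f(w_{\mathbf{Q}_k})
=\Big[f\big(\textstyle\sum_i w_{Q_i}\big)-\sum_i f(w_{Q_i})\Big]-\delta V\,w_{\mathbf{Q}_k},
\]
so the error splits into an \emph{interaction term} coming from the nonlinearity not being additive, and a \emph{potential term} $-\delta V w_{\mathbf{Q}_k}$.

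For the potential term, I would use $|V|\le C$ (bounded, from $(H1)$) and the exponential decay $w(x)\le c e^{-|x|}$ together with $\eta<1$ to get $|\delta V w_{\mathbf{Q}_k}(x)|\le C\delta \sum_i e^{-|x-Q_i|}\le C\delta\, W(x)$; combined with the hypothesis $\delta<e^{-2\rho}$ this is $\le C e^{-2\rho} W(x)$, which is of the required form $C e^{-\xi\rho}W(x)$. For the interaction term I would fix a point $x$ and, using that the $Q_i$ are $\rho$-separated, note that at most one $w_{Q_i}$ is "large" near $x$; writing $i_0$ for the index of the nearest spike and using the $C^{1+\sigma}$ (in particular locally Lipschitz, with $f'(0)=0$) regularity of $f$ from $(f_1)$, I would estimate
\[
\Big|f\big(\textstyle\sum_i w_{Q_i}\big)-\sum_i f(w_{Q_i})\Big|
\le C\Big(\sum_{i\neq i_0} w_{Q_i}\Big)\Big(w_{Q_{i_0}}+\sum_{i\neq i_0}w_{Q_i}\Big)^{\min(\sigma, p-1\text{-type})},
\]
i.e. the error is quadratic in the "tails". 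The key computational lemma is then the standard one: for $\rho$-separated points and $\eta<1$,
\[
\sum_{i\neq i_0} e^{-|x-Q_i|}\le C e^{-\gamma \rho}\, e^{-\eta|x-Q_{i_0}|}
\quad\text{and more generally}\quad
\prod\text{of two such tails}\ \le\ C e^{-\gamma\rho}\,W(x),
\]
for some $\gamma>0$ depending only on $\eta$. Crucially the constants here are uniform in $k$ and in $\mathbf{Q}_k$ because the separation condition is uniform; this is exactly the "independence of $k$" point flagged in the text.

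Putting the two pieces together gives $|S(w_{\mathbf{Q}_k})(x)|\le C e^{-\xi\rho} W(x)$ for $\xi=\min(\gamma,2)$ (or slightly smaller depending on the exponents from $f$), hence $\|S(w_{\mathbf{Q}_k})\|_*\le C e^{-\xi\rho}$. The main obstacle I anticipate is the bookkeeping in the interaction estimate: one must handle the superposition of possibly infinitely (well, $k$, but $k$ arbitrary) many exponentially small tails and show the resulting double sum $\sum_{i\ne i_0}\sum_{\ell}$ still collapses to a single $C e^{-\gamma\rho}W(x)$ with a $k$-independent constant. This is where the geometric-series structure coming from the $\rho$-separation of $\Lambda_k$, together with $\eta$ being strictly less than $1$ (so there is room to "absorb" a factor $e^{-(1-\eta)|x-Q_i|}$ into the decay of the tails), does the work; once that summation lemma is in hand, the rest is routine.
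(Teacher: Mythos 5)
Your proposal is correct and follows essentially the same route as the paper: split $S(w_{\mathbf{Q}_k})$ into the nonlinear interaction term $f(\sum_i w_{Q_i})-\sum_i f(w_{Q_i})$ and the potential term $-\delta V w_{\mathbf{Q}_k}$, bound the latter using $\delta<e^{-2\rho}$, and bound the former pointwise by isolating the dominant spike and summing the exponentially small tails via the $\rho$-separation (the paper phrases this as an explicit case split between the regions $|x-Q_j|\le\rho/2$ and their complement, which is just your "nearest spike $i_0$" argument in different clothing). The $k$-uniform geometric-series summation you flag as the key technical point is exactly what the paper relies on as well.
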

\begin{proof}
Observe that
\begin{equation}
S(w_{\mathbf{Q}_k})=-\delta Vw_{\mathbf{Q}_k}+f(w_{\mathbf{Q}_k})-\sum_{i=1}^kf(w_{Q_i}).
\end{equation}

Firstly, fix $j \in \{ 1 , \ldots , k\}$ and consider the region $|x-Q_j | \leq {\rho \over
2} $. In this region we have
\begin{eqnarray}
|f(w_{\mathbf{Q}_k})-\sum_{i=1}^kf(w_{Q_i})|&\leq & C \left[ f'(w_{Q_j}) \sum_{Q_i \not= Q_j}
w(x-Q_i ) + \sum_{Q_i \not= Q_j }f(w_{Q_i}) \right] \nonumber\\
&\leq & C( f'(w_{Q_j})e^{-{1\over 2} \rho}+e^{-\frac{(1+\sigma)\rho}{2}}) \label{EE1} \\
\nonumber \\
&\leq & C e^{-\xi \rho}e^{-\eta |x-Q_j|} \nonumber
\end{eqnarray}
for a proper choice of $\xi >0$.

Consider now the region $|x-Q_j | > {\rho \over 2}$, for
all $j $. We get in the region under consideration
\begin{eqnarray}
|f(w_{\mathbf{Q}_k})-\sum_{i=1}^kf(w_{Q_i}) | &\leq & C \left[ \sum_{j } f(w_{Q_j} ) \right]
\leq C \left[ \sum_{j } e^{-(1+\sigma) |x-Q_j |}  \right]\nonumber
 \\
&\leq &  \sum_{j} e^{-\eta |x-Q_j |}  e^{-{1+\sigma-\eta
\over 2} \rho  } \label{EE2}\\
&\leq & ce^{-\xi \rho}\sum_je^{-\eta |x-Q_j |}\nonumber
\end{eqnarray}
for some $\xi >0$.

Secondly, it is easy to see that under the assumption on $\delta$
\begin{equation}\label{EE3}
|\delta Vw_{\mathbf{Q}_k}|\leq c e^{-\xi \rho}\sum_je^{-\eta |x-Q_j |} ,
\end{equation}
for some $\xi>0$.

From the above estimates (\ref{EE1}), (\ref{EE2}) and (\ref{EE3}), we get that
\begin{equation}
\|S(w_{\mathbf{Q}_k})\|_*\leq ce^{-\xi \rho}
\end{equation}
for some $\xi>0$ independent of $\rho, k $ and $\mathbf{Q}_k$.
\end{proof}

The following proposition is standard. We refer to \cite{LNW} and further improvements of \cite{AWZ}.

\begin{proposition} \label{p401}
Given $0<\eta<1$. There exist positive numbers $\rho_0$, $C$ and $\xi >0$ such that for all $\rho\geq \rho_0$, and for any $\mathbf{Q}_k\in\Lambda_k$, $\delta <e^{-2\rho}$, there is a unique solution $(\phi_{\mathbf{Q}_k} , \{c_{ij}\}  )$  to the following problem:
 \begin{equation}
 \left\{\begin{array}{c}
 \Delta (w_{\mathbf{Q}_k}+\phi_{\mathbf{Q}_k})-(1+\delta V)(w_{\mathbf{Q}_k}+\phi_{\mathbf{Q}_k})+f(w_{\mathbf{Q}_k}+\phi_{\mathbf{Q}_k})=\sum_{i=1,\cdots,k, j=1,\cdots, N}c_{ij}Z_{ij},\\
 \int_{\R^N}\phi_{\mathbf{Q}_k}Z_{ij}dx=0 \mbox{ for } i=1,\cdots,k, j=1,\cdots,N.
 \end{array}
 \right.
 \end{equation}

 Furthermore $\phi_{\mathbf{Q}_k}$ is $C^1$ in $\mathbf{Q}_k$ and we have
\begin{equation}
 \|\phi_{\mathbf{Q}_k} \|_{*} \leq C\|S(w_{\mathbf{Q}_k})\|_*\leq C e^{-\xi \rho }, |c_{ij}|\leq ce^{-\xi\rho}.
\label{est2}\end{equation}

\end{proposition}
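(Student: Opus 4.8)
\textbf{Proof proposal for Proposition \ref{p401}.}

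The plan is to run the standard Liapunov--Schmidt reduction in the weighted $L^\infty$ norm $\|\cdot\|_*$ defined in \eqref{e302}, paying attention only to the point emphasized in the text: all constants must be uniform in the number of spikes $k$ and in $\mathbf{Q}_k\in\Lambda_k$. First I would set up the functional-analytic framework: write $u = w_{\mathbf{Q}_k}+\phi$ with $\phi$ in the space
\[
\mathcal{E}_{\mathbf{Q}_k}=\Big\{\phi\in H^1(\R^N)\cap L^\infty : \int_{\R^N}\phi\, Z_{ij}\,dx=0,\ i=1,\dots,k,\ j=1,\dots,N\Big\},
\]
and expand $S(w_{\mathbf{Q}_k}+\phi) = S(w_{\mathbf{Q}_k}) + L_{\mathbf{Q}_k}\phi + \mathcal{N}(\phi)$, where $L_{\mathbf{Q}_k}\phi = \Delta\phi - (1+\delta V)\phi + f'(w_{\mathbf{Q}_k})\phi$ is the linearized operator and $\mathcal{N}(\phi) = f(w_{\mathbf{Q}_k}+\phi) - f(w_{\mathbf{Q}_k}) - f'(w_{\mathbf{Q}_k})\phi$ collects the quadratic-and-higher terms. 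The projected problem becomes: find $\phi\in\mathcal{E}_{\mathbf{Q}_k}$ and scalars $c_{ij}$ with $S(w_{\mathbf{Q}_k}+\phi) = \sum_{ij}c_{ij}Z_{ij}$.

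The heart of the argument is the \emph{a priori} invertibility estimate for the projected linear operator: there exist $\rho_0$ and $C>0$, independent of $k$ and $\mathbf{Q}_k$, so that for $\rho\geq\rho_0$ and $\delta<e^{-2\rho}$, any solution of $L_{\mathbf{Q}_k}\phi = h + \sum_{ij}c_{ij}Z_{ij}$ with $\phi\in\mathcal{E}_{\mathbf{Q}_k}$ satisfies $\|\phi\|_*\leq C\|h\|_*$. I would prove this by contradiction: if it fails, one extracts sequences $k_n$, $\mathbf{Q}_{k_n}^{(n)}$, $\phi_n$ with $\|\phi_n\|_* = 1$ and $\|h_n\|_* \to 0$. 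Testing against $Z_{ij}$ and using $\langle Z_{ij}, Z_{i'j'}\rangle \approx \delta_{ii'}\delta_{jj'}\|\partial_{y_1}w\|_{L^2}^2$ (off-diagonal terms are exponentially small in $\rho$, hence negligible after fixing $\rho$ large), one shows $|c_{ij}^{(n)}|\to 0$, so effectively $L_{\mathbf{Q}_{k_n}^{(n)}}\phi_n\to 0$ in $\|\cdot\|_*$. The weight $W$ forces $\phi_n$ to concentrate its sup near some spike $Q_{i_n}^{(n)}$; translating that spike to the origin and using that neighboring spikes are at distance $\geq\rho$, the limit $\tilde\phi$ solves $\Delta\tilde\phi - \tilde\phi + f'(w)\tilde\phi = 0$ on $\R^N$ with $\int \tilde\phi\, \partial_{y_j}w = 0$; by the nondegeneracy hypothesis $(f_2)$, $\tilde\phi\equiv 0$. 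A standard elliptic barrier/comparison argument with the comparison function $W$ (using $\eta<1$ so that $\Delta e^{-\eta|x|} - e^{-\eta|x|} < 0$ away from the origin, which dominates the exponentially small potential term $\delta V$ and the exponentially decaying $f'(w_{\mathbf{Q}_k})$ in the far region) then upgrades this to $\|\phi_n\|_*\to 0$, contradicting $\|\phi_n\|_* = 1$. This is the step I expect to be the main obstacle, precisely because one must track the geometry carefully enough to keep all constants independent of $k$; the cut-off structure of $Z_{ij}$ (supported in $B_{\rho^2/(2(\rho+1))}(Q_i)$) and the exponential separation of spikes are what make this possible.

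Given the linear estimate, the nonlinear step is routine. One checks from $(f_1)$ (that $f\in C^{1+\sigma}$) that $\mathcal{N}$ satisfies $\|\mathcal{N}(\phi_1)-\mathcal{N}(\phi_2)\|_* \leq C\big(\|\phi_1\|_*^{\sigma}+\|\phi_2\|_*^{\sigma}\big)\|\phi_1-\phi_2\|_*$ on a small ball $\{\|\phi\|_*\leq 1\}$, using that $w_{\mathbf{Q}_k}$ is bounded and that the weight $W$ is an algebra-type weight ($W^{-2}\cdot(\text{sum of two bumps})\lesssim W^{-1}$ because bumps are $\rho$-separated, so products of decaying profiles are controlled). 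Writing the projected problem as the fixed point $\phi = \mathcal{A}_{\mathbf{Q}_k}\big(-S(w_{\mathbf{Q}_k}) - \mathcal{N}(\phi)\big)$, where $\mathcal{A}_{\mathbf{Q}_k}$ is the bounded inverse from the previous paragraph and Lemma \ref{lemma201} gives $\|S(w_{\mathbf{Q}_k})\|_* \leq ce^{-\xi\rho}$, the contraction mapping theorem on $\{\|\phi\|_*\leq 2C c e^{-\xi\rho}\}$ yields a unique $\phi_{\mathbf{Q}_k}$ with $\|\phi_{\mathbf{Q}_k}\|_*\leq C\|S(w_{\mathbf{Q}_k})\|_*\leq Ce^{-\xi\rho}$ for $\rho$ large. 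The coefficients are recovered by $c_{ij} = \langle S(w_{\mathbf{Q}_k}+\phi_{\mathbf{Q}_k}), Z_{ij}\rangle / \|Z_{ij}\|_{L^2}^2 + (\text{small corrections})$, and pairing the equation against $Z_{ij}$ gives $|c_{ij}|\leq c e^{-\xi\rho}$.

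Finally, the $C^1$-dependence of $\phi_{\mathbf{Q}_k}$ on $\mathbf{Q}_k$ follows by differentiating the fixed-point identity: the map $(\mathbf{Q}_k,\phi)\mapsto \phi + \mathcal{A}_{\mathbf{Q}_k}(S(w_{\mathbf{Q}_k}) + \mathcal{N}(\phi))$ is $C^1$ jointly (here one uses $f\in C^{1+\sigma}$ so that $f'$ is continuous, enough for $C^1$ in parameters after the reduction), and its derivative in $\phi$ at the solution is $\mathrm{Id} + O(e^{-\xi\rho})$, invertible; the implicit function theorem then gives $\phi_{\mathbf{Q}_k}\in C^1$ with derivative bounds again uniform in $k$. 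This completes the proof.
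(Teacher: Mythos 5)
Your proposal correctly reconstructs the standard Liapunov--Schmidt reduction argument (a priori invertibility of the projected linearized operator via a compactness/nondegeneracy contradiction with a barrier built from the weight $W$, followed by contraction mapping and the implicit function theorem), with the uniformity in $k$ coming from the exponential separation of spikes exactly as required. The paper itself gives no proof of Proposition \ref{p401} --- it declares it standard and refers to \cite{LNW} and \cite{AWZ} --- and your sketch is essentially the argument those references carry out, so there is nothing to contrast.
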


\subsection{A secondary Liapunov-Schmidt reduction}\label{2.2}
\setcounter{equation}{0}

In this section, we present a key estimate on the difference between the solutions in the $k-$th step and $ (k+1)-$th step. This second Liapunov-Schmidt reduction has been used in the paper \cite{AWZ}.

For $(Q_1,\cdots,Q_{k})\in \Lambda_{k}$,  we  denote $u_{ Q_1, \cdots, Q_k}$ as $ w_{ Q_1,..., Q_k}+ \phi_{Q_1, ..., Q_k}$, where $ \phi_{ Q_1, \cdots, Q_k}$ is the unique solution given by Proposition \ref{p401}. The main estimate below states that the difference between $ u_{Q_1, \cdots, Q_{k+1}}$ and $ u_{ Q_1, \cdots, Q_k}+ w_{ Q_{k+1}}$ is small globally in $H^1 (\R^N)$ norm.

To this end, we now write
 \begin{eqnarray}
 u_{Q_1,\cdots,Q_{k+1}}&=&u_{Q_1,\cdots,Q_k}+w_{Q_{k+1}}+\varphi_{k+1}\\
 &=&\bar{W}+\varphi_{k+1}\nonumber,
 \end{eqnarray}
where
$$\bar{W}= u_{Q_1,\cdots,Q_k}+w_{Q_{k+1}}.$$

By Proposition \ref{p401}, we can easily derive that
\begin{equation}
\label{vark100}
 \|\varphi_{k+1} \|_{*} \leq C e^{-\xi \, \rho }.
\end{equation}

However the estimate (\ref{vark100}) is not sufficient.   We need the following key estimate for $\varphi_{k+1}$. (In the following we will always assume that $\eta>\frac{1}{2}$.)

 \begin{lemma}\label{lemma501}
 Let $\rho$, $\delta$ be as in Proposition \ref{p401}. Then it holds
 \begin{eqnarray}
\label{keyvar}
 \int_{\R^N} (|\nabla \varphi_{k+1}|^2 + \varphi_{k+1}^2 ) &\leq& Ce^{-\xi  \rho }\sum_{i=1}^kw(|Q_{k+1}-Q_i|)\\
 &+& C \delta^2(\int_{\R^N}V^2w^2_{Q_{k+1}}dx+(\int_{\R^N}|V|w_{Q_{k+1}}dx)^2),\nonumber
 \end{eqnarray}
 for some constant $C>0,\xi>0$ independent of $\rho ,k, \eta$ and $\mathbf{Q}_{k+1}\in \Lambda_{k+1}$.
 \end{lemma}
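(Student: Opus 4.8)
The plan is to derive the equation satisfied by $\varphi_{k+1}$, test it against $\varphi_{k+1}$ itself, and control the right-hand side using the estimates from Proposition \ref{p401} together with the exponential decay of $w$. First I would write down what $\varphi_{k+1}$ solves. Since $u_{Q_1,\dots,Q_{k+1}}$ solves $S(u_{Q_1,\dots,Q_{k+1}}) = \sum_{i=1}^{k+1}\sum_{j=1}^N c_{ij}^{(k+1)} Z_{ij}$ and $u_{Q_1,\dots,Q_k}$ solves $S(u_{Q_1,\dots,Q_k}) = \sum_{i=1}^{k}\sum_{j=1}^N c_{ij}^{(k)} Z_{ij}$, subtracting and using $u_{Q_1,\dots,Q_{k+1}} = \bar W + \varphi_{k+1}$ gives a linear equation for $\varphi_{k+1}$ of the schematic form
\begin{equation*}
\Delta \varphi_{k+1} - (1+\delta V)\varphi_{k+1} + f'(\bar W)\varphi_{k+1} = E + N(\varphi_{k+1}) + \sum c_{ij}Z_{ij},
\end{equation*}
where $E = -S(\bar W) = -\big[\,f(\bar W) - f(u_{Q_1,\dots,Q_k}) - f(w_{Q_{k+1}}) - \delta V w_{Q_{k+1}} + (\text{terms from }S(u_{Q_1,\dots,Q_k}))\,\big]$ collects the ``interaction'' error between the $k$-spike configuration and the new spike at $Q_{k+1}$, and $N(\varphi_{k+1}) = f(\bar W + \varphi_{k+1}) - f(\bar W) - f'(\bar W)\varphi_{k+1}$ is the quadratic remainder. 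The point is that $E$ is small precisely because $w$ decays exponentially: $|E|$ is controlled, in suitable pointwise terms, by the overlap of $w_{Q_{k+1}}$ with $\sum_{i=1}^k w_{Q_i}$, which integrates to something of size $\sum_{i=1}^k w(|Q_{k+1}-Q_i|)$, plus the potential term $\delta V w_{Q_{k+1}}$.

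Next I would multiply the equation by $\varphi_{k+1}$ and integrate over $\R^N$. The left-hand side produces $-\int(|\nabla\varphi_{k+1}|^2 + \varphi_{k+1}^2) + \int f'(\bar W)\varphi_{k+1}^2 - \delta\int V\varphi_{k+1}^2$; the term $\int f'(\bar W)\varphi_{k+1}^2$ is handled by the nondegeneracy/coercivity of the linearized operator restricted to the orthogonal complement of the $Z_{ij}$'s (this is exactly the invertibility statement underlying Proposition \ref{p401}), which gives a bound $\int(|\nabla\varphi_{k+1}|^2+\varphi_{k+1}^2) \le C\big(|\langle E,\varphi_{k+1}\rangle| + |\langle N(\varphi_{k+1}),\varphi_{k+1}\rangle|\big)$ after the $\sum c_{ij}Z_{ij}$ term drops out by the orthogonality condition $\int \varphi_{k+1} Z_{ij} = 0$ — which I would need to check (it follows from $\int\phi_{\mathbf Q_k}Z_{ij}=0$ for both $k$ and $k+1$ configurations, noting the supports of the $Z_{ij}$ at $Q_1,\dots,Q_k$ are disjoint from that at $Q_{k+1}$ for $\rho$ large). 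For the linear term, Cauchy–Schwarz gives $|\langle E,\varphi_{k+1}\rangle| \le \|E\|_{L^2}\|\varphi_{k+1}\|_{L^2}$, and splitting $E$ into the interaction part and the $\delta V w_{Q_{k+1}}$ part yields, after absorbing $\|\varphi_{k+1}\|_{L^2}$ into the left side via Young's inequality,
\begin{equation*}
\|E\|_{L^2}^2 \le C e^{-\xi\rho}\sum_{i=1}^k w(|Q_{k+1}-Q_i|) + C\delta^2\Big(\int V^2 w_{Q_{k+1}}^2 + \big(\int |V|w_{Q_{k+1}}\big)^2\Big),
\end{equation*}
where the appearance of the $(\int|V|w_{Q_{k+1}})^2$ term comes from handling the low-decay part of $V w_{Q_{k+1}}$ in the $*$-norm/$L^1$ fashion rather than $L^2$, which is why both a squared-$L^2$ and a squared-$L^1$ quantity show up. The nonlinear term is estimated by $|N(\varphi_{k+1})| \le C(|\varphi_{k+1}|^{1+\sigma} + |\varphi_{k+1}|^2)$ pointwise and then, using the already-known smallness $\|\varphi_{k+1}\|_* \le Ce^{-\xi\rho}$ from \eqref{vark100} together with the exponential weight $W$, one shows $|\langle N(\varphi_{k+1}),\varphi_{k+1}\rangle| \le Ce^{-\xi\rho}\|\varphi_{k+1}\|_{L^2}^2$, so for $\rho$ large this is absorbed into the left-hand side.

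The estimate of the interaction error $E$ is the crux and requires care in \emph{where} the overlap is measured. In the region near a spike $Q_i$ ($i\le k$), $f(\bar W) - f(u_{Q_1,\dots,Q_k}) - f(w_{Q_{k+1}})$ behaves like $f'(w_{Q_i}) w_{Q_{k+1}}$ plus higher order, and $\int f'(w_{Q_i}) w_{Q_{k+1}} \cdot (\text{weight})$ is bounded by a constant times $w(|Q_{k+1}-Q_i|)$ up to an $e^{-\xi\rho}$ gain coming from the cut in the exponents (exactly as in the proof of Lemma \ref{lemma201}); near $Q_{k+1}$ the roles are symmetric; and in the far region everything is exponentially small. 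Summing over $i$ gives the stated $\sum_{i=1}^k w(|Q_{k+1}-Q_i|)$. The main obstacle I anticipate is bookkeeping the exponents so that: (i) the prefactor is genuinely $e^{-\xi\rho}$ with $\xi>0$ independent of $k$ and $\eta$ (one must use $\eta>\tfrac12$ here so that $e^{-\eta|x-Q_i|}e^{-\eta|x-Q_{k+1}|}$, when integrated, produces $w(|Q_{k+1}-Q_i|) = O(e^{-|Q_i-Q_{k+1}|})$ rather than something weaker), and (ii) the $L^2$ norm $\|\varphi_{k+1}\|_{L^2}$ that gets divided off does not reintroduce $k$-dependence — this is where one leans on the fact that all constants in Proposition \ref{p401} and Lemma \ref{lemma201} were arranged to be $k$-independent. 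Once the weighted pointwise bounds on $E$ and $N(\varphi_{k+1})$ are in place, the inequality \eqref{keyvar} follows by combining the energy identity with Young's inequality and choosing $\rho$ large enough to absorb all error terms carrying a factor $e^{-\xi\rho}$ into the left-hand side.
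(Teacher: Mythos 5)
There is a genuine gap at the heart of your energy argument: the claim that $\int f'(\bar W)\varphi_{k+1}^2$ ``is handled by the nondegeneracy/coercivity of the linearized operator restricted to the orthogonal complement of the $Z_{ij}$'s.'' The quadratic form of $\Delta-1+f'(w)$ is \emph{not} negative definite on $\{Z_{ij}\}^\perp$: under $(f_1)$--$(f_2)$ the ground state $w$ has Morse index at least one, i.e.\ there is a unique positive eigenvalue $\lambda_1>0$ with eigenfunction $\Phi_0$, and $\Phi_0$ is orthogonal to the translation modes $\partial_j w$. So near each of the $k+1$ spikes the form $-\langle\bar L\varphi,\varphi\rangle$ has a direction in $\{Z_{ij}\}^\perp$ along which it is negative, and testing the equation against $\varphi_{k+1}$ gives an indefinite left-hand side that cannot be bounded below by $c_0\|\varphi_{k+1}\|_{H^1}^2$. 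Invertibility of $\bar L$ on $\{Z_{ij}\}^\perp$ (which is what Proposition \ref{p401} actually uses) does not imply sign-definiteness of the associated quadratic form. This is precisely why the paper performs a \emph{secondary} Liapunov--Schmidt reduction: it decomposes $\varphi_{k+1}=\psi+\sum_i c_i\phi_i+\sum_{ij}d_{ij}Z_{ij}$, where $\phi_i$ are localized copies of $\Phi_0$, proves coercivity of $-\langle\bar L\cdot,\cdot\rangle$ only on the smaller space orthogonal to both the $Z_{ij}$ and the $\phi_i$ (via a compactness argument that uses the orthogonality to $\phi_0$ in an essential way, see (\ref{phi2})), and estimates the unstable coefficients $c_i$ separately by pairing the equation with $\phi_i$ and exploiting $\langle\bar L\phi_i,\phi_i\rangle=-\lambda_1\int\phi_0^2+O(e^{-(1+\xi)\rho/2})\neq 0$. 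Without this extra projection your inequality $\int(|\nabla\varphi_{k+1}|^2+\varphi_{k+1}^2)\le C(|\langle E,\varphi_{k+1}\rangle|+\dots)$ simply does not follow.

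A secondary inaccuracy: the orthogonality $\int\varphi_{k+1}Z_{k+1,j}=0$ that you invoke to kill the Lagrange-multiplier terms is false. Since $\varphi_{k+1}=\phi_{Q_1,\dots,Q_{k+1}}-\phi_{Q_1,\dots,Q_k}$ and $\phi_{Q_1,\dots,Q_k}$ is a globally defined function (not supported near $Q_1,\dots,Q_k$), one only gets $\int\varphi_{k+1}Z_{ij}=0$ for $i\le k$, while $\int\varphi_{k+1}Z_{k+1,j}=-\int\phi_{Q_1,\dots,Q_k}Z_{k+1,j}$ is merely small, of size $e^{-\xi\rho}\sum_i e^{-\eta|Q_i-Q_{k+1}|}$; this is the coefficient $d_{k+1,j}$ in the paper's decomposition and it must be tracked, not discarded. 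Your treatment of the error term $\bar S$ and of the nonlinear remainder is consistent with the paper's (and the $L^2$ bound on $\bar S$ you state matches (\ref{s})), but the proof cannot close without the eigenfunction projection.
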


Before we proceed with the proof, we need the following lemma which can be found in Lemma 2.3 in \cite{BL}.

\begin{lemma}
 For $|Q_i-Q_j|\geq \rho$ large, it holds that
 \begin{equation}
 \int_{\R^N}f(w(x-Q_i))w(x-Q_j)dx=(\gamma_1+e^{-\xi\rho})w(|Q_i-Q_j|)
 \end{equation}
for some $\xi>0$ independent of large $\rho$ and
 \begin{equation}\label{gamma1}
 \gamma_1=\int_{\R^N}f(w)e^{-y_1}dy>0.
 \end{equation}
 \end{lemma}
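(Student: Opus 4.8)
The plan is to compute $\int_{\R^N}f(w(x-Q_i))\,w(x-Q_j)\,dx$ by centering at $Q_i$ and exploiting the exponential decay of $w$. After the translation $y=x-Q_i$, the integral becomes $\int_{\R^N}f(w(y))\,w(y-(Q_j-Q_i))\,dy$; write $P=Q_j-Q_i$, so $|P|\ge\rho$ is large, and set $e=P/|P|$. The key point is that $f(w(y))$ decays like $e^{-(1+\sigma)|y|}$ (faster than $w$), so the integral is concentrated near $y=0$, where $w(y-P)\approx w(|P|-y\cdot e)\cdot(1+o(1))$. More precisely, from the known far-field asymptotics of the ground state, $w(z)=(c_N+o(1))|z|^{-(N-1)/2}e^{-|z|}$ as $|z|\to\infty$, and for $|y|$ bounded while $|P|\to\infty$ one has $|y-P| = |P| - y\cdot e + O(|y|^2/|P|)$, hence $w(y-P) = w(|P|)\,e^{y\cdot e}\,(1+O(1/|P|))$ uniformly on bounded sets of $y$. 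Substituting and using $w(|P|)=(1+o(1))w(|Q_i-Q_j|)$ (indeed $w(|P|)=w(|Q_i-Q_j|)$ exactly since $w$ is radial and $|P|=|Q_i-Q_j|$), we get
\begin{equation*}
\int_{\R^N}f(w(y))\,w(y-P)\,dy = w(|Q_i-Q_j|)\Big(\int_{\R^N}f(w(y))\,e^{y\cdot e}\,dy + \text{error}\Big).
\end{equation*}

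Next I would identify the leading constant. Because $w$ and $f(w)$ are radial, $\int_{\R^N}f(w(y))e^{y\cdot e}\,dy$ is independent of the unit vector $e$; choosing $e=e_1$ gives exactly $\gamma_1=\int_{\R^N}f(w)e^{y_1}\,dy$. One checks $\gamma_1$ is finite because $f(w(y))\le Ce^{-(1+\sigma)|y|}$ and $e^{y_1}\le e^{|y|}$, so the integrand is dominated by $Ce^{-\sigma|y|}$, integrable; positivity of $\gamma_1$ follows from $f(w)>0$. (Note: the paper writes $\gamma_1=\int f(w)e^{-y_1}dy$, which by the reflection $y_1\mapsto -y_1$ equals $\int f(w)e^{y_1}dy$, so the two forms agree.)

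The remaining task — and the main technical obstacle — is to show the error term is $O(e^{-\xi\rho})$ relative to $w(|Q_i-Q_j|)$, i.e. to control the region of large $|y|$ and the curvature correction $O(|y|^2/|P|)$ in the exponent. For the tail $|y|\ge |P|/2$ (say): there $w(y-P)\le C$ is bounded and $f(w(y))\le Ce^{-(1+\sigma)|y|}\le Ce^{-(1+\sigma)|P|/2}$, and integrating the remaining decay gives a contribution $\lesssim e^{-(1+\sigma)|P|/2}$, which is $e^{-\xi\rho}w(|Q_i-Q_j|)$ for suitable $\xi>0$ since $w(|P|)\gtrsim |P|^{-(N-1)/2}e^{-|P|}$. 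For the region $|y|\le |P|/2$ one needs the quantitative expansion $|y-P|=|P|-y\cdot e + O(|y|^2/|P|)$ together with the decay estimate on $w$, turning the error into $w(|P|)\int_{\R^N}f(w(y))e^{y\cdot e}(e^{O(|y|^2/|P|)}-1)\,dy$; since $f(w)e^{|y|}$ is integrable and $|y|^2/|P|\le C$ on this range, dominated convergence plus the explicit $1/|P|\le 1/\rho$ gains a factor controllable by $e^{-\xi\rho}$ after absorbing — more honestly, one splits further at $|y|\le |P|^{1/2}$ where $|y|^2/|P|\le 1$ is $o(1)$, and $|P|^{1/2}\le|y|\le|P|/2$ where the extra decay of $f(w)$ again beats everything. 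Since this is precisely Lemma~2.3 of \cite{BL}, I would invoke that reference for the detailed estimates rather than reproduce them, noting only that the uniformity of $\xi$ in $\rho$ (for $\rho$ large) is what the cited proof provides.
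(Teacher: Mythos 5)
Your strategy is the standard one and is, in effect, the same as the paper's: the paper gives no proof of this lemma at all, referring instead to Lemma 2.3 of \cite{BL}, and your sketch reconstructs the argument behind that citation (centre the faster-decaying factor $f(w)$ at the origin, expand $|y-P|=|P|-y\cdot e+O(|y|^2/|P|)$ on the region where $f(w(y))$ is concentrated, and identify the constant $\int_{\R^N}f(w)e^{y\cdot e}\,dy=\gamma_1$ by radial symmetry; your remark that the sign of $y_1$ in (\ref{gamma1}) is immaterial under reflection is correct, as are your finiteness and positivity checks for $\gamma_1$).

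Two quantitative points in your error analysis do not hold as written. First, the tail estimate for $|y|\ge|P|/2$: you discard $w(y-P)$ as ``bounded'' and keep only $f(w(y))\le Ce^{-(1+\sigma)|y|}\le Ce^{-(1+\sigma)|P|/2}$, but $e^{-(1+\sigma)|P|/2}$ is \emph{not} $O(e^{-\xi\rho})\,w(|P|)$ when $\sigma\le 1$, since $w(|P|)\sim |P|^{-(N-1)/2}e^{-|P|}$ and $e^{-(1+\sigma)|P|/2}/e^{-|P|}=e^{(1-\sigma)|P|/2}$. You must retain both exponential factors and use the triangle inequality, $e^{-(1+\sigma)|y|}e^{-|y-P|}\le e^{-\sigma|y|}e^{-|P|}$, after which the tail really is exponentially small relative to $w(|P|)$. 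Second, on the main region the relative corrections coming from the $(1+O(1/r))$ factor in the far-field asymptotics of $w$ and from the curvature term $O(|y|^2/|P|)$ are of size $O(1/|P|)=O(1/\rho)$; your attempt to ``absorb'' $1/\rho$ into $e^{-\xi\rho}$ is invalid, and no further splitting of the $y$-integral removes a genuinely polynomial correction. What the argument (and Lemma 2.3 of \cite{BL}) actually yields is $\int f(w(x-Q_i))w(x-Q_j)\,dx=(\gamma_1+O(\rho^{-1}))\,w(|Q_i-Q_j|)$. That weaker error suffices for every application in the paper (e.g.\ the lower bound $\tfrac14\gamma_1\sum_i w(|Q_{k+1}-Q_i|)$ in (\ref{eq604})), so the exponential error in the displayed statement is best read as an overstatement of the lemma rather than something your proof must deliver --- but your proof should not claim it either.
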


\medskip
\noindent {\it Proof of Lemma \ref{lemma501}.} To prove (\ref{keyvar}), we need to perform a further decomposition.

 As we mentioned before, under the assumptions $(f_1)-(f_2)$, there exists a unique positive eigenvalue with eigenfunction $\phi_0$ of the following linearized operator:
\begin{equation}
\Delta \phi-\phi+ f'(w)\phi=\lambda_1\phi
\end{equation}
which is even and has exponential decay. We fix $ \phi_0$ such that $ \max_{ y \in \R^N} \phi_0 =1$. Denote by $\phi_i=\chi_i\phi_0 (x-Q_i)$, where $ \chi_i$ is the cut-off function introduced in Section 1.2.

By the equations satisfied by $\varphi_{k+1}$, we have
\begin{equation}\label{varphi}
\bar{L}\varphi_{k+1}=\bar{S}+\sum_{i=1,\cdots,k+1, j=1,\cdots,N}c_{ij}Z_{ij}
\end{equation}
for some constants $\{c_{ij}\}$, where
\begin{equation*}
\bar{L}=\Delta-(1+\delta V)+f'(\tilde{W}),
\end{equation*}
\begin{equation*}
f'(\tilde{W})=\left\{\begin{array}{l}
\frac{ f(\bar{W} + \varphi_{k+1})- f(\bar{W})}{ \varphi_{k+1}}, \ \mbox{if} \ \varphi_{k+1} \not =0\\
f'(\bar{W}), \ \mbox{if} \ \varphi_{k+1}=0,
\end{array}
\right.
\end{equation*}
and
\begin{equation*}
\bar{S}=f(u_{Q_1,\cdots,Q_{k}}+w_{Q_{k+1}})-f(u_{Q_1,\cdots,Q_k})-f(w_{Q_{k+1}})-\delta V w_{Q_{k+1}}.
\end{equation*}

We proceed the proof in a few steps.

The $L^2$-norm of $\bar{S}$ is estimated first:

By the estimate in Proposition \ref{p401}, we have the following estimate
\begin{eqnarray*}
&&\int_{\R^N}|f(u_{Q_1,\cdots,Q_{k}}+w_{Q_{k+1}})-f(u_{Q_1,\cdots,Q_k})-f(w_{Q_{k+1}})|^2dx\\
&&\leq ce^{-\xi\rho}\sum_{i=1}^kw(|Q_{k+1}-Q_i|),
\end{eqnarray*}
and the last term can be estimated as
\begin{eqnarray*}
&&\int_{\R^N}(\delta V w_{Q_{k+1}})^2dx \leq C\delta^2\int_{\R^N}V^2w^2_{Q_{k+1}}dx.
\end{eqnarray*}
 So by the above two estimates, we have
\begin{equation}\label{s}
\|\bar{S}\|^2_{L^2(\R^N)}\leq C(e^{-\xi\rho}\sum_{i=1}^kw( |Q_{k+1}-Q_i|)+\delta^2\int_{\R^N}V^2w^2_{Q_{k+1}}dx).
\end{equation}

By the estimate (\ref{vark100}), we have  the following estimate
\begin{equation}
\label{W123}
 \tilde{W}= \sum_{i=1}^{k+1} w(x-Q_i) + O(e^{- (1+\xi)\frac{\rho}{2}}).
\end{equation}

Decompose $\varphi_{k+1}$ as
\begin{equation}\label{decom}
\varphi_{k+1}=\psi+\sum_{i=1}^{k+1}c_i\phi_i
+\sum_{i=1,\cdots,k+1,j=1,\cdots,N}d_{ij}Z_{ij}
\end{equation}
for some $c_i,d_{ij}$ such that
\begin{equation}
\label{345n}
\int_{\R^N} \psi\phi_idx=\int_{\R^N}\psi Z_{ij}dx=0,\ i=1,..., k, \ j=1,..., N.
\end{equation}

Since
\begin{equation}
\varphi_{k+1}=\phi_{Q_1,\cdots,Q_{k+1}}-\phi_{Q_1,\cdots,Q_k},
\end{equation}

we have for $i=1,\cdots,k$,
\begin{eqnarray*}
d_{ij}&=&\int_{\R^N} \varphi_{k+1}Z_{ij}\\
&=&\int_{\R^N}(\phi_{Q_1,\cdots,Q_{k+1}}
-\phi_{Q_1,\cdots,Q_k})Z_{ij}\\
&=&0
\end{eqnarray*}
and
\begin{eqnarray*}
d_{k+1,j}&=&\int_{\R^N} \varphi_{k+1}Z_{k+1,j}\\
&=&\int_{\R^N}(\phi_{Q_1,\cdots,Q_{k+1}}-\phi_{Q_1,\cdots,Q_k})Z_{k+1,j}\\
&=&-\int_{\R^N} \phi_{Q_1,\cdots,Q_k}Z_{k+1,j},
\end{eqnarray*}
where we use the orthogonality conditions satisfied by $\phi_{Q_1,\cdots,Q_k}$ and $\phi_{Q_1,\cdots,Q_{k+1}}$.
So by Proposition \ref{p401}, we have
\begin{equation}\label{d}
\left\{\begin{array}{l}
|d_{ij}|=0 \mbox{ for }i=1,\cdots,k,\\
\\
|d_{k+1,j}|\leq ce^{-\xi\rho }\sum_{i=1}^ke^{-\eta|Q_i-Q_{k+1}|}.
\end{array}
\right.
\end{equation}

By (\ref{decom}), we can rewrite (\ref{varphi}) as
\begin{equation}\label{decom1}
\bar{L}\psi+\sum_{i=1}^{k+1} c_i\bar{L}(\phi_i)+\sum_{i=1,\cdots,k+1,j=1,\cdots,N}d_{ij}\bar{L}Z_{ij}
=\bar{S}+\sum_{i=1,\cdots,k+1,j=1,\cdots,N}c_{ij}Z_{ij}.
\end{equation}

To obtain the estimates for the coefficients $c_i$ , we use the equation (\ref{decom1}).

First, multiplying (\ref{decom1}) by $\phi_i$ and integrating over $\R^N$, we have
\begin{eqnarray}
\label{W345}
c_i\int_{\R^N}(\bar{L}(\phi_i))\phi_i&=&-\sum_{j=1}^N d_{ij}\int_{\R^N} \bar{L}(Z_{ij})\phi_i\\
&+&\int_{\R^N}\bar{S}\phi_i-\int_{\R^N}(\bar{L}\psi)\phi_i\nonumber
\end{eqnarray}
where
\begin{equation}
\left\{\begin{array}{ll}
\label{W346}
|\int_{\R^N}\bar{S}\phi_i|\leq ce^{-\xi \rho} e^{-\eta|Q_i-Q_{k+1}|}+\delta |\int_{\R^N}Vw_{Q_{k+1}}\phi_i dx| \mbox{ for }i=1,\cdots,k\\
\\
|\int_{\R^N}\bar{S}\phi_{k+1}|\leq ce^{-\xi\rho}\sum_{i=1}^k e^{-\eta|Q_i-Q_{k+1}|}+\delta |\int_{\R^N}Vw_{Q_{k+1}}\phi_{k+1} dx|.
\end{array}
\right.
\end{equation}

 From (\ref{W123}) we see that
\begin{equation}
\label{W234}
\int_{\R^N}(\bar{L}\phi_i)\phi_i =  - \lambda_1 \int_{\R^n} \phi_0^2 + O(e^{-(1+\xi)\frac{\rho}{2}}).
\end{equation}

Combining (\ref{d}) and (\ref{W345})-(\ref{W234}), and the orthogonal conditions satisfied by $\psi$, we have
\begin{equation}\label{c}
\left\{\begin{array}{ll}
|c_i|\leq  ce^{-\xi \rho} e^{-\eta|Q_i-Q_{k+1}|}+\delta |\int_{\R^N}Vw_{Q_{k+1}}\phi_i dx|+e^{-\xi \rho}\|\psi\|_{H^1(B_{\frac{\rho}{2}}(Q_i))}, \ i=1,..., k\\
\\
|c_{k+1}|\leq  ce^{-\xi \rho}\sum_{i=1}^k e^{-\eta|Q_i-Q_{k+1}|}+\delta |\int_{\R^N}Vw_{Q_{k+1}}\phi_{k+1} dx|+e^{-\xi \rho}\|\psi\|_{H^1(B_{\frac{\rho}{2}}(Q_{k+1}))}.
\end{array}
\right.
\end{equation}

Next let us estimate $\psi$. Multiplying (\ref{decom1}) by $\psi$ and integrating over $\R^N$, we find
\begin{eqnarray}\label{psi}
\int_{\R^N} \bar{L}(\psi)\psi&=&\int_{\R^N}\bar{S} \psi-\sum_{i=1,\cdots,k+1,j=1,\cdots,N}d_{ij}\int_{\R^N} \bar{L}(Z_{ij})\psi\\
&-&\sum_{i=1}^{k+1}c_i\int_{\R^N}(\bar{L}\phi_i)\psi.\nonumber
\end{eqnarray}
We claim that
\begin{equation}
\int_{\R^N} [-\bar{L}(\psi)\psi] \geq c_0 \|\psi\|^2_{H^1(\R^N)}
\end{equation}
for some constant $c_0>0$.

Since the approximate solution is exponentially decay away from the points $Q_i$, we have
\begin{equation}
\int_{\R^N \backslash  \cup_i B_{\frac{\rho-1}{2}}(Q_i)}\bar{L}(\psi)\psi\geq \frac{1}{2}
\int_{\R^N \backslash \cup_i B_{\frac{\rho-1}{2}}(Q_i)}|\nabla \psi|^2+|\psi|^2.
\end{equation}
Now we only need to prove the above estimates in the domain $\cup_i B_{\frac{\rho-1}{2}}(Q_i)$. We prove it by contradiction. Otherwise, there exists a sequence $\rho_n\to +\infty$, and $Q_i^{(n)}$ such that
\begin{eqnarray*}
\int_{B_{\frac{\rho_n-1}{2}}(Q_i^{(n)})}|\nabla \psi_n|^2+|\psi_n|^2=1,\ \int_{B_{\frac{\rho_n-1}{2}}(Q_i^{(n)})}\bar{L}(\psi_n)\psi_n\to 0,\mbox{ as } n\to \infty.
\end{eqnarray*}
Then we can extract from the sequence $\psi_n(\cdot-Q_i^{(n)})$ a subsequence which will converge weakly in $H^1(\R^N)$ to $\psi_\infty$, such that
\begin{equation}\label{phi1}
\int_{\R^N}|\nabla \psi_\infty|^2+|\psi_\infty|^2-f'(w)\psi_\infty^2=0,
\end{equation}
and
\begin{equation}\label{phi2}
\int_{\R^N} \psi_\infty \phi_0=\int_{\R^n} \psi_\infty \frac{\partial w}{\partial x_i}=0, \mbox{ for }i=1,\cdots,N.
\end{equation}
From (\ref{phi1}) and (\ref{phi2}), we deduce that $\psi_\infty=0$.

Hence
\begin{equation}
\psi_n\rightharpoonup 0 \mbox{ weakly } \mbox{ in } H^1(\R^N).
\end{equation}
So
\begin{equation}
\int_{B_{\frac{\rho_n-1}{2}}(Q_i^{(n)})} f'(\tilde{W})\psi_n^2\to 0 \mbox{ as }n\to \infty.
\end{equation}
We have
\begin{equation}
\|\psi_n\|_{H^1(B_{\frac{\rho_n-1}{2}})}\to 0 \mbox{ as }n\to \infty.
\end{equation}
This contradicts the assumption
\begin{equation}
\|\psi_n\|_{H^1}=1.
\end{equation}

So we get that
\begin{equation}\label{psi1}
\int_{\R^N} [- \bar{L}(\psi)\psi ] \geq c_0 \|\psi\|^2_{H^1(\R^N)}.
\end{equation}

From (\ref{psi}) and (\ref{psi1}), we get
\begin{eqnarray}
\|\psi\|^2_{H^1 (\R^N) }&\leq& c(\sum_{ij}|d_{ij} | |\int_{\R^N} \bar{L}(Z_{ij}) \psi | +\sum_{i}|c_i||\int_{\R^N}(\bar{L}\phi_i)\psi|+|\int_{\R^N} \bar{S} \psi | ) \nonumber\\
&\leq &c(\sum_{ij}|d_{ij}|\|\psi\|_{H^1 (\R^N) }+\sum_i |c_i|\|\psi\|_{H^1 (B_{\frac{\rho}{2}}(Q_i)) }\nonumber\\
&+&\|\bar{S}\|_{L^2 (\R^N) }\|\psi\|_{H^1 (\R^N)}).
\end{eqnarray}
So by estimate (\ref{c}) and the above,
\begin{eqnarray}\label{psi2}
\|\psi\|_{H^1 (\R^N) }
&\leq& c(\sum_{ij}|d_{ij}|+e^{-\xi\rho}\sum_{i=1}^ke^{-\eta |Q_{k+1}-Q_i|}\\
&+&\delta \int_{\R^N}|V|w_{Q_{k+1}}dx +\|\bar{S}\|_{L^2 (\R^N)}).\nonumber
\end{eqnarray}
From (\ref{d}) (\ref{s}) and (\ref{psi2}), and recall that $\eta>\frac{1}{2}$,  we get that
\begin{eqnarray}\label{varphi1}
\|\varphi_{k+1}\|_{H^1 (\R^N) }&\leq& C(e^{-\xi\rho}\sum_{i=1}^ke^{-\eta |Q_{k+1}-Q_i|}+\delta \int_{\R^N}|V|w_{Q_{k+1}}dx+\|\bar{S}\|_{L^2})\nonumber\\
&\leq& C(e^{-\xi\rho}\sum_{i=1}^ke^{-\eta|Q_{k+1}-Q_i|}+e^{-\xi  \rho }(\sum_{i=1}^kw(|Q_{k+1}-Q_i|))^{\frac{1}{2}}\nonumber\\
&+&\delta\int_{\R^N}|V|w_{Q_{k+1}}dx+\delta(\int_{\R^N}V^2w^2_{Q_{k+1}}dx)^{\frac{1}{2}}).
\end{eqnarray}

Since we choose $\eta>\frac{1}{2}$, by the definition of the configuration space,  we have
\begin{equation}\label{ineq}
(\sum_{i=1}^ke^{-\eta|Q_i-Q_{k+1}|})^2\leq c\sum_{i=1}^kw(|Q_i-Q_{k+1}|).
\end{equation}

By (\ref{varphi1}) and (\ref{ineq}), we thus obtain that
\begin{eqnarray}
\|\varphi_{k+1}\|_{H^1 (\R^N) }
&\leq& C(e^{-\xi  \rho }(\sum_{i=1}^kw(|Q_{k+1}-Q_i|))^{\frac{1}{2}}
+\delta\int_{\R^N}|V|w_{Q_{k+1}}dx\nonumber\\
&+&\delta(\int_{\R^N}V^2w^2_{Q_{k+1}}dx)^{\frac{1}{2}}).
\end{eqnarray}
The estimate (\ref{keyvar}) then follows.

Moreover, from the estimate (\ref{c}) and (\ref{d}), and taking into consideration that $\chi_i$ is supported in $B_{\frac{\rho}{2}}(Q_i)$, using holder inequality, we can get a more accurate estimate on $\varphi_{k+1}$,

\begin{eqnarray}\label{varphi2}
\|\varphi_{k+1}\|_{H^1 (\R^N) }
&\leq& C(e^{-\xi  \rho }(\sum_{i=1}^kw(|Q_{k+1}-Q_i|))^{\frac{1}{2}}
+\delta\sum_{i=1,\cdots,k+1}(\int_{B_{\frac{\rho}{2}}(Q_i)}V^2w^2_{Q_{k+1}}dx)^{\frac{1}{2}}\nonumber\\
&+&\delta(\int_{\R^N}V^2w^2_{Q_{k+1}}dx)^{\frac{1}{2}}).
\end{eqnarray}

\qed

\subsection{The Reduced Problem: A Maximization Procedure}\label{2.3}
\setcounter{equation}{0}
In this section, we study a maximization problem. Fix $\mathbf{Q}_k\in \Lambda_k$, we define a new functional
\begin{equation}
\mathcal{M}(\mathbf{Q}_k)=J(u_{\mathbf{Q}_k})=J[w_{\mathbf{Q}_k}+\phi_{\mathbf{Q}_k}]: \Lambda_k \rightarrow \R.
\end{equation}
Define
\begin{equation}
\mathcal{C}_k=\mbox{sup}_{\mathbf{Q}_k\in\Lambda_k}\{\mathcal{M}(\mathbf{Q}_k)\}.
\end{equation}

Note that $\mathcal{M}(\mathbf{Q}_k)$ is continuous in $\mathbf{Q}_k$. We will show below that  the maximization problem has a solution. Let $\mathcal{M}(\bar{\mathbf{Q}}_k)$ be the maximum where $\bar{\mathbf{Q}}_k=(\bar{Q}_1,\cdots,\bar{Q}_k)\in \bar{\Lambda}_k$, that is
 \begin{equation}
 \mathcal{M}(\bar{Q}_1,\cdots,\bar{Q}_k)=\max_{\mathbf{Q}_k\in \Lambda_k}\mathcal{M}(\mathbf{Q}_k),
 \end{equation}
 and we denote the solution by $u_{\bar{Q}_1,\cdots,\bar{Q}_k}$.

 We first prove that the maximum can be attained at finite points for each $\mathcal{C}_k$.

\begin{lemma}\label{lemma601}
Let assumptions $(H1)-(H2)$ and the assumptions in Proposition \ref{p401} be satisfied. Then, for all $k$:
\begin{itemize}
\item
There exists $\mathbf{Q}_k=(Q_1,Q_2,\cdots,Q_k)\in \Lambda_k$ such that
\begin{equation}
\mathcal{C}_k=\mathcal{M}(\mathbf{Q}_k);
\end{equation}
\item
There holds
\begin{equation}\label{e501}
\mathcal{C}_{k+1}>\mathcal{C}_k+I(w),
\end{equation}
where $I(w)$ is the energy of $w$,
\begin{equation}
I(w)=\frac{1}{2}\int_{\R^N}(|\nabla w|^2+w^2)-\int_{\R^N}F(w)dx.
\end{equation}
\end{itemize}
\end{lemma}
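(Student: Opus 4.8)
The plan is to prove the two assertions of Lemma \ref{lemma601} essentially by a joint induction on $k$, with the energy expansion of $\mathcal{M}(\mathbf{Q}_k)$ as the main analytic input. First I would establish the energy expansion
\[
\mathcal{M}(\mathbf{Q}_k)=kI(w)+\frac{\delta}{2}\sum_{i=1}^k\int_{\R^N}V\,w_{Q_i}^2\,dx-\frac{\gamma_1}{2}\sum_{i\neq j}w(|Q_i-Q_j|)+o\Big(\sum_{i\neq j}w(|Q_i-Q_j|)\Big)+o(\delta),
\]
uniformly for $\mathbf{Q}_k\in\Lambda_k$. This comes from plugging $u_{\mathbf{Q}_k}=w_{\mathbf{Q}_k}+\phi_{\mathbf{Q}_k}$ into $J$, using $\|\phi_{\mathbf{Q}_k}\|_*\leq Ce^{-\xi\rho}$ from Proposition \ref{p401} together with the orthogonality $\int\phi_{\mathbf{Q}_k}Z_{ij}=0$ and the equation $S(u_{\mathbf{Q}_k})=\sum c_{ij}Z_{ij}$ to show the $\phi$-dependent terms are of higher order, expanding $F(\sum w_{Q_i})$ around the single bumps, and evaluating the interaction integrals $\int f(w_{Q_i})w_{Q_j}$ with the Lemma of \cite{BL} that gives $\int f(w(x-Q_i))w(x-Q_j)=(\gamma_1+e^{-\xi\rho})w(|Q_i-Q_j|)$.

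For the first bullet (attainment), I would argue by contradiction: take a maximizing sequence $\mathbf{Q}_k^{(n)}$. If the configuration does not degenerate — i.e. all points stay in a bounded region and no two points collide — then continuity of $\mathcal{M}$ on the (closed, bounded) region gives a maximizer. Two things can go wrong. (a) Some point $|Q_i^{(n)}|\to\infty$: then $\int V w_{Q_i}^2\to 0$ by (H1), and since $w(|Q_i-Q_j|)\to 0$ as that point separates, the energy contribution of that bump drops to $I(w)$, so $\mathcal{M}(\mathbf{Q}_k^{(n)})$ approaches at most $\mathcal{C}_{k-1}+I(w)$; but by the inductive hypothesis (second bullet at level $k-1$) we have $\mathcal{C}_k>\mathcal{C}_{k-1}+I(w)$, a contradiction — provided we can show $\mathcal{C}_k$ is strictly larger than this limiting value, which is exactly where (H2) enters: placing the $k$-th bump far away but at a point where $V(x)e^{\bar\eta|x|}$ is huge gives, for fixed large $\rho$ and small $\delta$, a positive gain $\frac{\delta}{2}\int V w_{Q_k}^2 dx$ that dominates the (exponentially small in the separation, hence $\ll$) interaction loss. (b) Two points collide, $|Q_i^{(n)}-Q_j^{(n)}|\to\rho$ (the boundary of $\Lambda_k$): here the limit configuration is still in $\bar\Lambda_k$ and $\mathcal{M}$ extends continuously, so the sup is attained at a boundary configuration — this is allowed since we maximize over $\bar\Lambda_k$. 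Thus a maximizer $\mathbf{Q}_k$ exists.

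For the second bullet, given the maximizer $\bar{\mathbf{Q}}_k$ at level $k$, I would construct a competitor at level $k+1$ by keeping $\bar Q_1,\dots,\bar Q_k$ and adding a new point $Q_{k+1}$ chosen with $|Q_{k+1}|$ very large — large enough that all interactions $w(|Q_{k+1}-\bar Q_i|)$ are negligible compared to $\delta$ — and located where $V(Q_{k+1})e^{\bar\eta|Q_{k+1}|}$ is as large as (H2) permits. The key estimate here is Lemma \ref{lemma501}: it controls $\|\varphi_{k+1}\|_{H^1}$, hence $\mathcal{M}(\bar{\mathbf{Q}}_k,Q_{k+1})=\mathcal{M}(\bar{\mathbf{Q}}_k)+I(w)+\frac{\delta}{2}\int V w_{Q_{k+1}}^2\,dx + (\text{error})$, where the error is bounded by the right-hand side of \eqref{keyvar} — a term $e^{-\xi\rho}\sum w(|Q_{k+1}-\bar Q_i|)$ plus $\delta^2(\int V^2 w_{Q_{k+1}}^2 + (\int |V|w_{Q_{k+1}})^2)$. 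Choosing $|Q_{k+1}|$ large kills the first, and the $\delta^2$ terms are $o(\delta\int V w_{Q_{k+1}}^2)$ since $\delta$ is small and $\int Vw_{Q_{k+1}}^2$, while going to $0$, does so polynomially/exponentially slower than its square when weighted against $(H2)$'s lower bound $\int Vw_{Q_{k+1}}^2\gtrsim e^{-(2-\bar\eta)|Q_{k+1}|}$ versus $w_{Q_{k+1}}$-interactions $\sim e^{-2|Q_{k+1}|}$. Hence for $\rho$ large and $\delta$ small the net change is strictly positive: $\mathcal{M}(\bar{\mathbf{Q}}_k,Q_{k+1})>\mathcal{M}(\bar{\mathbf{Q}}_k)+I(w)=\mathcal{C}_k+I(w)$, so $\mathcal{C}_{k+1}\geq \mathcal{M}(\bar{\mathbf{Q}}_k,Q_{k+1})>\mathcal{C}_k+I(w)$.

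The main obstacle is making the "net change is strictly positive" inequality genuinely uniform in $k$: the gain $\frac{\delta}{2}\int V w_{Q_{k+1}}^2$ depends only on the new point, but one must verify that all error terms — those from Lemma \ref{lemma501}, and the higher-order terms in the energy expansion — are bounded by constants independent of $k$ (which the excerpt's running convention and the "independent of $k$" clauses in Proposition \ref{p401} and Lemma \ref{lemma501} are designed to guarantee) and that the competitor point $Q_{k+1}$ can always be found far from all the existing $\bar Q_i$; the latter is automatic since $\R^N$ is unbounded while the previous $k$ points, being a maximizer, stay in a bounded set by the argument in the first bullet. Balancing the (H2) lower bound on the linear gain against the quadratic-in-$\delta$ and exponentially-small-in-separation losses is the delicate quantitative point and should be carried out carefully.
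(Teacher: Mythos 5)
Your strategy is the same as the paper's: a joint induction on $k$, attainment by ruling out escape of a maximizing sequence to infinity, and the strict gap (\ref{e501}) obtained from a competitor configuration $(\bar Q_1,\dots,\bar Q_k,Q_{k+1})$ with $Q_{k+1}$ placed far away where $(H2)$ makes the gain $\tfrac{\delta}{2}\int V w_{Q_{k+1}}^2$ dominate the interaction loss, with Lemma \ref{lemma501} supplying the error control. There are, however, two concrete gaps. The first is the advertised global expansion $\mathcal{M}(\mathbf{Q}_k)=kI(w)+\cdots+o(\delta)$ ``uniformly in $\mathbf{Q}_k$'': each bump contributes its own $O(e^{-\xi\rho})$ and $O(\delta^2)$ remainder, so the naive error is $O(k\,e^{-\xi\rho}+k\,\delta^2)$, which is useless for $k$ large. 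Avoiding exactly this accumulation is the whole point of the secondary Liapunov--Schmidt reduction: the paper never expands $\mathcal{M}(\mathbf{Q}_k)$ globally, only the increment from step $k$ to step $k+1$, where Lemma \ref{lemma501} bounds the new error by the interaction of the \emph{new} bump with the old ones plus $\delta^2$-terms attached to the new bump only. Your actual arguments for both bullets are incremental, so this is repairable, but the expansion as stated should be dropped.

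The second gap is in the competitor step. You invoke (\ref{keyvar}) and assert that the term $\delta^2\bigl(\int_{\R^N}|V|w_{Q_{k+1}}\,dx\bigr)^2$ is $o\bigl(\delta\int Vw_{Q_{k+1}}^2\bigr)$, but the rates you quote are wrong: $(H2)$ gives $\int Vw_{Q_{k+1}}^2\gtrsim e^{-\bar\eta|Q_{k+1}|}$ (from the contribution near $Q_{k+1}$), not $e^{-(2-\bar\eta)|Q_{k+1}|}$, and the interaction loss is $\sum_i w(|Q_{k+1}-\bar Q_i|)\sim e^{-|Q_{k+1}|}$, not $e^{-2|Q_{k+1}|}$; the inequality you actually need is $\delta e^{-\bar\eta|Q_{k+1}|}\gg e^{-|Q_{k+1}|}$, which holds because $\bar\eta<1$ once $|Q_{k+1}|\gtrsim(1-\bar\eta)^{-1}\ln(1/\delta)$ (this is the paper's explicit choice of $Q_{k+1}$). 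More seriously, the global quantity $\bigl(\int_{\R^N}|V|w_{Q_{k+1}}\bigr)^2$ is not obviously dominated by $\delta^{-1}\int Vw_{Q_{k+1}}^2$ for a general $V$ satisfying only $(H1)$--$(H2)$; this is precisely why the paper derives the localized refinement (\ref{varphi2}) and uses it ``instead of (\ref{keyvar})'' in the lower-bound step: there every $\delta^2$-error lives either on $B_{\rho/2}(Q_i)$ with $i\le k$, where $w_{Q_{k+1}}$ is exponentially small, or on $B_{\rho/2}(Q_{k+1})$, where $V$ is small, and is therefore visibly $o\bigl(\delta\int Vw_{Q_{k+1}}^2\bigr)$. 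You should substitute (\ref{varphi2}) for (\ref{keyvar}) at this point of your argument.
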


\begin{proof} In this part, we follow the proofs  in \cite{CPS} but we use the estimates we derived in Section 3. We divide the proof into several steps.

\medskip

\noindent
{\bf Step 1:}
$\mathcal{C}_1>I(w)$, and $\mathcal{C}_1$ can be attained at finite point. First using standard Liapunov-Schmidt reduction, we have
\begin{equation}
\|\phi_Q\|_{H^1}\leq c\|\delta V w_Q\|_{L^2}.
\end{equation}

Assuming that $|Q|\to \infty$, then we have
\begin{eqnarray*}
&&J(u_Q)=\frac{1}{2}\int_{\R^N}|\nabla u_Q|^2+u_Q^2-\int_{\R^N}F(u_Q)dx\\
&&+\frac{1}{2}\int_{\R^N}\delta Vu_Q^2dx\\
&&\geq I(w)+\frac{1}{2}\int_{\R^N}\delta V w_Q^2dx+\|\phi_Q\|^2_{H^1}\\
&&\geq I(w)+\frac{1}{2}\int_{\R^N}\delta V w_Q^2dx+\int_{\R^N}\delta^2 V^2 w_Q^2dx\\
&&\geq I(w)+\frac{1}{4}[\int_{B_{\frac{\rho}{2}}(Q)}\delta V w_Q^2 dx -\sup_{B_{\frac{|Q|}{4}}(0)}|w_Q|^2\int_{supp V^-}\delta|V|dx]\\
&&\geq I(w)+\frac{1}{4}\int_{B_{\frac{\rho}{2}}(Q)}\delta V w_Q^2 dx-O(e^{-\frac{3}{2} |Q|}).
\end{eqnarray*}

By the slow decay assumption on the potential $V$, we get that
\begin{equation*}
\frac{1}{4}\int_{B_{\frac{\rho}{2}}(Q)}\delta V w_Q^2 dx-O(e^{-\frac{3}{2} |Q|})>0, \ \mbox{for} \ \ |Q| \ \mbox{large},
\end{equation*}
so
\begin{equation*}
\mathcal{C}_1\geq J(u_Q)>I(w).
\end{equation*}

Let us prove now that $\mathcal{C}_1$ can be attained at finite point. Let $\{Q_i\}$ be a sequence such that
$\lim_{i\to \infty}\mathcal{M}(Q_i)=\mathcal{C}_1$, and assume that $|Q_i|\to \infty$,
\begin{eqnarray*}
&&J(u_{Q_i})\\
&&=\frac{1}{2}\int_{\R^N}|w_{Q_i}+\phi_{Q_i}|^2+|w_{Q_i}+\phi_{Q_i}|^2dx-\int_{\R^N}F(w_{Q_i}+\phi_{Q_i})dx
\\
&&+\frac{1}{2}
\int_{\R^N}\delta V(w_{Q_i}+\phi_{Q_i})^2 dx\\
&&=\frac{1}{2}\int_{\R^N}|\nabla w_{Q_i}|^2+|w_{Q_i}|^2dx-\int_{\R^N}F(w_{Q_i})dx\\
&&+\frac{1}{2}\int_{\R^N}|\nabla \phi_{Q_i}|^2+|\phi_{Q_i}|^2dx+\int_{\R^N}\nabla w_{Q_i}\nabla \phi_{Q_i}+w_{Q_i}\phi_{Q_i}-f(w_{Q_i})\phi_{Q_i} dx\\
&&-\int_{\R^N}F(w_{Q_i}+\phi_{Q_i})-F(w_{Q_i})-f(w_{Q_i})\phi_{Q_i}dx+\frac{1}{2}\int_{\R^N}\delta V(w_{Q_i}+\phi_{Q_i})^2dx\\
&&\leq I(w)+c\|S(w_{Q_i})\|^2_{L^2(\R^N)}+\frac{1}{2}\int_{\R^N}\delta V(w_{Q_i}+\phi_{Q_i})^2dx\\
&&\leq I(w)+O(\int_{\R^N}\delta^2 V^2w_{Q_i}^2dx)+\frac{1}{2}\int_{\R^N}\delta V(w_{Q_i}+\phi_{Q_i})^2dx.
\end{eqnarray*}
Since $V(x)\to 0$ as $|x|\to \infty$, we have
\begin{eqnarray*}
O(\int_{\R^N}\delta^2 V^2w_{Q_i}^2dx)+\frac{1}{2}\int_{\R^N}\delta V(w_{Q_i}+\phi_{Q_i})^2dx\to 0\mbox{ as }i\to \infty.
\end{eqnarray*}
So we have
\begin{eqnarray*}
\mathcal{C}_1=\lim_{i\to \infty }J(u_{Q_i})\leq I(w).
\end{eqnarray*}
A contradiction. Thus $\mathcal{C}_1$ can be attained at a finite point.

\medskip

\noindent
{\bf Step 2:}
Assume that there exists $\mathbf{Q}_k=(\bar{Q}_1,\cdots,\bar{Q}_k)\in \Lambda_k$ such that $\mathcal{C}_k=\mathcal{M}(\mathbf{Q}_k)$, and we denote the solution by $u_{\bar{Q}_1,\cdots, \bar{Q}_k}$,

Next, we prove that there exists $(Q_1,\cdots,Q_{k+1})\in \Lambda_{k+1}$ such that $\mathcal{C}_{k+1}$ can be attained.

Let $((Q_1^{(n)},\cdots,Q_{k+1}^{(n)}))_n$ be a sequence such that
\begin{equation}\label{ck1}
\mathcal{C}_{k+1}=\lim_{n\to \infty }\mathcal{M}(Q_1^{(n)},\cdots,Q_{k+1}^{(n)}).
\end{equation}

We claim that $(Q_1^{(n)},\cdots,Q_{k+1}^{(n)})$ is bounded. We prove it by contradiction. Without loss of generality, we assume that $|Q_{k+1}^{(n)}|\to \infty$ as $n\to \infty$. In the following we omit the index $n$ for simplicity.

\begin{eqnarray}\label{eq601}
&&J(u_{Q_1,\cdots,Q_{k+1}})\\
&&=J(u_{Q_1,\cdots,Q_k}+w_{Q_{k+1}}+\varphi_{k+1})\nonumber\\
&&=J(u_{Q_1,\cdots,Q_k}+w_{Q_{k+1}})\nonumber\\
&&+\frac{1}{2}\int_{\R^N}|\nabla \varphi_{k+1}|^2+|\varphi_{k+1}|^2+\delta V\varphi_{k+1}^2dx\nonumber\\
&&+\int_{\R^N}\nabla(u_{Q_1,\cdots,Q_k}+w_{Q_{k+1}})\nabla \varphi_{k+1}+(1+\delta V)(u_{Q_1,\cdots,Q_k}+w_{Q_{k+1}})\varphi_{k+1}\nonumber\\
&&-f(u_{Q_1,\cdots,Q_k}+w_{Q_{k+1}})\varphi_{k+1} dx\nonumber\\
&&-\int_{\R^N}F(u_{Q_1,\cdots,Q_k}+w_{Q_{k+1}}+\varphi_{k+1})-F(u_{Q_1,\cdots,Q_k}+w_{Q_{k+1}})\nonumber\\
&&-f(u_{Q_1,\cdots,Q_k}+w_{Q_{k+1}})\varphi_{k+1}
dx\nonumber\\
&&=J(u_{Q_1,\cdots,Q_k}+w_{Q_{k+1}})+O(\|\varphi_{k+1}\|^2_{H^1(\R^N)}\nonumber\\
&&+\|\bar{S}(u_{Q_1,\cdots,Q_k}+w_{Q_{k+1}})\|_{H^1(\R^N)}
\|\varphi_{k+1}\|_{H^1(\R^N)})+\sum_{i=1,\cdots,k,j=1,\cdots,N}\int_{\R^N}c_{ij}Z_{ij}\varphi_{k+1}dx\nonumber\\
&&=J(u_{Q_1,\cdots,Q_k}+w_{Q_{k+1}})\nonumber\\
&&+O(e^{-\xi\rho}\sum_{i=1}^kw(|Q_{k+1}- Q_i|)+\delta^2(\int_{\R^N}V^2w_{Q_{k+1}}^2dx+(\int_{\R^N}Vw_{Q_{k+1}}dx)^2),\nonumber
\end{eqnarray}

and
\begin{eqnarray}\label{eq602}
&&J(u_{Q_1,\cdots,Q_k}+w_{Q_{k+1}})\\
&&=J(u_{Q_1,\cdots,Q_k})+I(w_{Q_{k+1}})+\frac{1}{2}\int_{\R^N}\delta V w^2_{Q_{k+1}}dx\nonumber\\
&&+\int_{\R^N}\nabla u_{Q_1,\cdots,Q_k}\nabla w_{Q_{k+1}}+(1+\delta V)u_{Q_1,\cdots,Q_k}w_{Q_{k+1}}dx\nonumber\\
&&-\int_{\R^N}F(u_{Q_1,\cdots,Q_k}+w_{Q_{k+1}})-F(u_{Q_1,\cdots,Q_k})-F(w_{Q_{k+1}})dx\nonumber\\
&&\leq \mathcal{C}_k+I(w)+\frac{1}{2}\int_{\R^N}\delta V w^2_{Q_{k+1}}dx\nonumber\\
&&+\int_{\R^N}(f(u_{Q_1,\cdots,Q_k})-\sum_{i=1}^kc_{ij}Z_{ij})w_{Q_{k+1}}dx\nonumber\\
&&-\int_{\R^N}f(u_{Q_1,\cdots,Q_k})w_{Q_{k+1}}+f(w_{Q_{k+1}})u_{Q_1,\cdots,Q_k}dx+O(e^{-\xi\rho}
\sum_{i=1}^kw(|Q_{k+1}-Q_i|))\nonumber\\
&&\leq  \mathcal{C}_k+I(w)+\frac{1}{2}\int_{\R^N}\delta V w^2_{Q_{k+1}}dx\nonumber\\
&&-\int_{\R^N}\sum_{i=1}^k c_{ij}Z_{ij}w_{Q_{k+1}}dx-\int_{\R^N}f(w_{Q_{k+1}})(w_{Q_1,\cdots,Q_k}
+\phi_{\mathbf{Q}_k})\nonumber\\
&&+O(e^{-\xi\rho}
\sum_{i=1}^kw(|Q_{k+1}-Q_i|)).\nonumber
\end{eqnarray}

By estimate (\ref{est2}), and that the definition of $Z_{ij}$,  we have
\begin{eqnarray}\label{eq603}
|\sum_{i=1}^kc_{ij}\int_{\R^N}Z_{ij}w_{Q_{k+1}}dx|\leq ce^{-\xi \rho}\sum_{i=1}^kw(|Q_{k+1}-Q_i|).
\end{eqnarray}

By the equation satisfied by $\phi_k$
\begin{eqnarray*}
\Delta \phi_{\mathbf{Q}_k}-\phi_{\mathbf{Q}_k}+f'(w_{\mathbf{Q}_k})\phi_{\mathbf{Q}_k}
=S(w_{\mathbf{Q}_k})+N(\phi_{\mathbf{Q}_k})+\delta V\phi_{\mathbf{Q}_k}+\sum_{i=1,\cdots,k,j=1,\cdots,N} c_{ij}Z_{ij},
\end{eqnarray*}
where
\begin{equation}
N(\phi_{\mathbf{Q}_k})=f(w_{\mathbf{Q}_k}+\phi_{\mathbf{Q}_k})
-f(w_{\mathbf{Q}_k})-f'(w_{\mathbf{Q}_k})\phi_{\mathbf{Q}_k},
\end{equation}
we derive that
\begin{eqnarray*}
&&\int_{\R^N}f(w_{Q_{k+1}})\phi_{\mathbf{Q}_k} dx\\
&&=\int_{\R^N}(\Delta -1)w_{Q_{k+1}}\phi_{\mathbf{Q}_k} dx\\
&&=\int_{\R^N}(\Delta -1)\phi_{\mathbf{Q}_k} w_{Q_{k+1}}dx\\
&&=\int_{\R^N}(S(w_{\mathbf{Q}_k})+N(\phi_{\mathbf{Q}_k})+\delta V\phi_{\mathbf{Q}_k}\\
&&+\sum_{i=1,\cdots,k,j=1,\cdots,N} c_{ij}Z_{ij}-f'(w_{\mathbf{Q}_k})\phi_{\mathbf{Q}_k} )w_{Q_{k+1}}dx.\\
\end{eqnarray*}
 We can further choose $\eta $ such that $\eta+\sigma>1$, $(1+\sigma)\eta>1$ , we can easily get that
\begin{eqnarray*}
\int_{\R^N}(N(\phi_{\mathbf{Q}_k})-f'(w_{\mathbf{Q}_k})\phi_{\mathbf{Q}_k} )w_{Q_{k+1}}dx\leq Ce^{-\xi \rho }\sum_{i=1}^kw( |Q_{k+1}-Q_i|)),
\end{eqnarray*}
and
\begin{eqnarray*}
\int_{\R^N}\sum c_{ij}Z_{ij}w_{Q_{k+1}}dx\leq ce^{-\xi \rho }\sum_{i=1}^kw(|Q_{k+1}-Q_i|),
\end{eqnarray*}

\begin{eqnarray*}
&&\int_{\R^N}(S(w_{\mathbf{Q}_k})+\delta V\phi_{\mathbf{Q}_k})w_{Q_{k+1}}dx\\
&&\leq c(\delta \int_{\R^N}Vw_{\mathbf{Q}_k}w_{Q_{k+1}}dx+\delta e^{-\xi \rho} \int_{\R^N}\sum_{i=1}^ke^{-\eta|x-Q_i|}Vw_{Q_{k+1}}dx\\
&&+e^{-\xi \rho }\sum_{i=1}^kw( |Q_{k+1}-Q_i|)).
\end{eqnarray*}

By the above four estimates, we have
\begin{eqnarray}\label{eq6031}
\int_{\R^N}f(w_{Q_{k+1}})\phi_{\mathbf{Q}_k} dx&\leq& c(\delta e^{-\xi \rho}\int_{\R^N}\sum_{i=1}^ke^{-\eta|x-Q_i|}Vw_{Q_{k+1}}dx+\delta \int_{\R^N}Vw_{\mathbf{Q}_k}w_{Q_{k+1}}dx\nonumber\\
&+&ce^{-\xi \rho }\sum_{i=1}^kw( |Q_{k+1}-Q_i|)).
\end{eqnarray}

So we have
\begin{eqnarray}\label{eq604}
&&\int_{\R^N}f(w_{Q_{k+1}})(w_{Q_1,\cdots,Q_k}+\phi_{\mathbf{Q}_k})dx\\
&&=\int_{\R^N}f(w_{Q_{k+1}})w_{Q_1,\cdots,Q_k}+O(e^{-\xi \rho}\sum_{i=1}^kw(|Q_{k+1}-Q_i|))\nonumber\\
&&+\delta e^{-\xi \rho}\int_{\R^N}\sum_{i=1}^ke^{-\eta|x-Q_i|}Vw_{Q_{k+1}}dx+\delta \int_{\R^N}Vw_{\mathbf{Q}_k}w_{Q_{k+1}}dx\nonumber\\
&&\geq \frac{1}{4}\gamma_1 \sum_{i=1}^kw(|Q_{k+1}-Q_i|)+O(e^{-\xi \rho}\sum_{i=1}^kw( |Q_{k+1}-Q_i|))\nonumber\\
&&+\delta e^{-\xi \rho}\int_{\R^N}\sum_{i=1}^ke^{-\eta|x-Q_i|}Vw_{Q_{k+1}}dx+\delta \int_{\R^N}Vw_{\mathbf{Q}_k}w_{Q_{k+1}}dx.\nonumber
\end{eqnarray}
Thus combining (\ref{eq601}), (\ref{eq602}), (\ref{eq603}) and (\ref{eq604}), we obtain
\begin{eqnarray}\label{eq605}
&&J(u_{Q_1,\cdots,Q_{k+1}})\\
&&\leq \mathcal{C}_k+I(w)+\frac{1}{2}\int_{\R^N}\delta V w^2_{Q_{k+1}}dx-\frac{1}{4}\gamma \sum_{i=1}^kw(|Q_{k+1}-Q_i|)\nonumber\\
&&+O(e^{-\xi \rho}\sum_{i=1}^kw( |Q_{k+1}-Q_i|)+\delta e^{-\xi \rho}\int_{\R^N}\sum_{i=1}^ke^{-\eta|x-Q_i|}Vw_{Q_{k+1}}dx\nonumber\\
&&+\delta \int_{\R^N}Vw_{\mathbf{Q}_k}w_{Q_{k+1}}dx+\delta^2\int_{\R^N}V^2w_{Q_{k+1}}^2
+\delta^2(\int_{\R^N}Vw_{Q_{k+1}})^2).\nonumber
\end{eqnarray}

By the assumption that $|Q_{k+1}^{(n)}|\to \infty$,
\begin{eqnarray}\label{e606}
&&\int_{\R^N}\delta V w^2_{Q^{(n)}_{k+1}}dx+\delta e^{-\xi \rho}\int_{\R^N}\sum_{i=1}^ke^{-\eta|x-Q_i|}Vw_{Q^{(n)}_{k+1}}dx+\delta \int_{\R^N}Vw_{\mathbf{Q}_k}w_{Q^{(n)}_{k+1}}dx\nonumber\\
&&+\delta^2\int_{\R^N}V^2w_{Q^{(n)}_{k+1}}^2+\delta^2(\int_{\R^N}Vw_{Q^{(n)}_{k+1}})^2\to 0 \mbox{ as }n \to \infty ,
\end{eqnarray}
and
 \begin{equation}\label{e607}
 -\frac{1}{4}\gamma \sum_{i=1}^kw(|Q_{k+1}-Q_i|)+O(e^{-\xi \rho}\sum_{i=1}^kw( |Q_{k+1}-Q_i|))<0.
  \end{equation}

Combining (\ref{ck1}), (\ref{eq605}), (\ref{e606})and (\ref{e606}), we have
\begin{equation}\label{leq}
\mathcal{C}_{k+1}\leq \mathcal{C}_k+I(w).
\end{equation}

On the other hand, since by the assumption, $\mathcal{C}_k$ can be attained at $(\bar{Q}_1,\cdots,\bar{Q}_k)$, so there exists other point $Q_{k+1}$ which is far away from the $k$ points which will be determined later. Next let's consider the solution concentrated at the points $(\bar{Q}_1,\cdots, \bar{Q_{k}}, Q_{k+1})$, and we denote the solution by $u_{\bar{Q}_1,\cdots,\bar{Q}_k,Q_{k+1}}$, then similar with the above argument, using the estimate (\ref{varphi2}) of $\varphi_{k+1}$ instead of (\ref{keyvar}), we have the following estimates:
\begin{eqnarray}
J(u_{\bar{Q}_1,\cdots,\bar{Q}_k, Q_{k+1}})&=&J(u_{\bar{Q}_1,\cdots,\bar{Q}_k})+I(w)+\frac{1}{2}\int_{\R^N}\delta V w^2_{Q_{k+1}}dx\\
&+&O(\sum_{i=1,\cdots,k+1}(\int_{B_{\frac{\rho}{2}}(Q_i)}\delta^2 V^2 w^2_{Q_{k+1}}dx)^{\frac{1}{2}})^2+O(\int_{\R^N}\delta^2 V^2 w^2_{Q_{k+1}}dx)\nonumber\\
&-&O(\sum_{i=1}^kw( |Q_{k+1}- \bar{Q}_i|))\nonumber\\
&+&O(\delta e^{-\xi \rho}\int_{\R^N}\sum_{i=1}^ke^{-\eta|x-\bar{Q}_i|}Vw_{Q_{k+1}}dx+\delta\int_{\R^N} V w_{\bar{\mathbf{Q}}_k}w_{Q_{k+1}}dx).\nonumber
\end{eqnarray}

By the asymptotic behavior of $V$ at infinity, i.e. $\lim_{|x|\to \infty}V(x)e^{\bar{\eta}|x|}=+\infty$ as $|x|\to \infty$, for some $\bar{\eta}<1$, we further choose $\eta>\bar{\eta}$, then we can choose $Q_{k+1}$ such that
\begin{equation}
|Q_{k+1}|\geq \frac{\max_{i=1}^k|\bar{Q}_i|+\ln \delta}{\eta-\bar{\eta}},
\end{equation}
then we can get that
\begin{eqnarray}
&&\frac{1}{2}\int_{\R^N}\delta V w^2_{Q_{k+1}}dx
+O(\sum_{i=1,\cdots,k+1}(\int_{B_{\frac{\rho}{2}}(Q_i)}\delta^2 V^2 w^2_{Q_{k+1}}dx)^{\frac{1}{2}})^2\\
&&+O(\int_{\R^N}\delta^2 V^2 w^2_{Q_{k+1}}dx)-O(\sum_{i=1}^kw( |Q_{k+1}- \bar{Q}_i|))\nonumber\\
&&+O(\delta e^{-\xi \rho}\int_{\R^N}\sum_{i=1}^ke^{-\eta|x-\bar{Q}_i|}Vw_{Q_{k+1}}dx+\delta\int_{\R^N} V w_{\bar{\mathbf{Q}}_k}w_{Q_{k+1}}dx)\nonumber\\
&&\geq C\delta e^{-\bar{\eta}|Q_{k+1}|}-O(\sum_{i=1,\cdots,k}e^{-{\eta|\bar{Q}_i-Q_{k+1}|}})>0.\nonumber
\end{eqnarray}
So
\begin{equation}\label{geq}
\mathcal{C}_{k+1}\geq J(u_{\bar{Q}_1,\cdots,\bar{Q}_k, Q_{k+1}})>\mathcal{C}_k+I(w).
\end{equation}
Combining (\ref{geq}) and (\ref{leq}), one get that
\begin{equation}
\mathcal{C}_k+I(w)<\mathcal{C}_{k+1}\leq \mathcal{C}_k+I(w).
\end{equation}
A contradiction. So we get that $\mathcal{C}_{k+1}$ can be attained at finite points in $\Lambda_{k+1}$.

Moreover, from the proof above, we can get a relation between $\mathcal{C}_{k+1}$ and $\mathcal{C}_k$:
\begin{equation}
\mathcal{C}_{k+1}\geq \mathcal{C}_k+I(w).
\end{equation}
\end{proof}

Next we have the following Proposition:
\begin{proposition}\label{p602}
The maximization problem
\begin{equation}
\max_{\mathbf{Q}\in \bar{\Lambda}_k} \mathcal{M}(\mathbf{Q})
\end{equation}
has a solution $\mathbf{Q} \in \Lambda_k^\circ$, i.e., the interior of $\Lambda_k$.
\end{proposition}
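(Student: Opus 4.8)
The statement asserts that the maximizer of $\mathcal M$ over $\bar\Lambda_k$ lies in the interior $\Lambda_k^\circ$. The plan is to combine the existence statement already proved in Lemma \ref{lemma601} (first bullet) with a boundary-exclusion argument. First I would invoke Lemma \ref{lemma601}: for each $k$ there exists $\mathbf Q_k\in\Lambda_k$ with $\mathcal C_k=\mathcal M(\mathbf Q_k)$, and moreover $\mathcal C_{k+1}>\mathcal C_k+I(w)$ (this strict inequality is the engine of the whole induction). By continuity of $\mathcal M$ on the closed set $\bar\Lambda_k$ and the fact that, by the slow-decay hypothesis $(H2)$, maximizing sequences cannot escape to infinity (this is exactly the content of the ``boundedness'' steps in the proof of Lemma \ref{lemma601}, Step 1 and Step 2, where $|Q_i^{(n)}|\to\infty$ was shown to force $\mathcal C_k\le\mathcal C_{k-1}+I(w)$, contradicting the strict inequality), the supremum is attained at some $\bar{\mathbf Q}_k\in\bar\Lambda_k$.

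The remaining point is to rule out $\bar{\mathbf Q}_k\in\partial\Lambda_k$, i.e. the situation where $\min_{i\ne j}|\bar Q_i-\bar Q_j|=\rho$ for some pair. I would argue by contradiction: suppose the maximizer $\bar{\mathbf Q}_k$ has two points, say $\bar Q_1$ and $\bar Q_2$, at exactly distance $\rho$. The idea is that such a ``glued'' configuration is energetically worse than splitting it into a genuine $(k-1)$-configuration plus one far-away spike. Concretely, the interaction energy between two spikes at distance $\rho$ contributes a term of order $-\gamma_1 w(\rho)$ (negative, by the estimate borrowed from \cite{BL} and the positivity of $\gamma_1$ in \eqref{gamma1}), which is not present when the spikes are separated. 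Quantitatively, using the energy expansion developed in the proof of Lemma \ref{lemma601} (formulas \eqref{eq601}--\eqref{eq605}) applied with the roles of ``$k$ points'' and ``the extra point'' played by the near-touching pair, one gets
\[
\mathcal M(\bar{\mathbf Q}_k)\le \mathcal C_{k-1}+I(w)-\tfrac14\gamma_1 w(\rho)+O(e^{-\xi\rho}w(\rho))+o_\delta(1),
\]
whereas on the other hand the strict inequality in Lemma \ref{lemma601} gives $\mathcal M(\bar{\mathbf Q}_k)=\mathcal C_k>\mathcal C_{k-1}+I(w)$. For $\rho\ge\rho_0$ large enough (so the error $O(e^{-\xi\rho}w(\rho))$ is dominated by $\tfrac14\gamma_1 w(\rho)$) and $\delta$ small, these two are incompatible, so $\bar{\mathbf Q}_k$ cannot lie on $\partial\Lambda_k$.

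\textbf{Main obstacle.} The delicate part is the boundary estimate: I need the energy expansion to be sharp enough to see the \emph{attractive} interaction term $-\tfrac14\gamma_1 w(\rho)$ and to control the $\varphi_k$-correction and the Lagrange-multiplier terms $c_{ij}Z_{ij}$ to strictly higher order (this is precisely why the refined estimate \eqref{varphi2} on $\varphi_{k+1}$ was established, and why the lemma from \cite{BL} with the explicit positive constant $\gamma_1$ is needed). One must also be careful that all constants $c,C,\xi$ remain independent of $k$ — which has been arranged throughout Section \ref{2} — so that a single choice of $\rho_0$ and $\delta_0$ works for every $k$. Once the interaction sign and the error bookkeeping are in place, the contradiction with the strict monotonicity $\mathcal C_k>\mathcal C_{k-1}+I(w)$ closes the argument and places the maximizer in $\Lambda_k^\circ$.
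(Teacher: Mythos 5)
Your proposal is correct and follows essentially the same route as the paper: attainment comes from Lemma \ref{lemma601}, and a boundary maximizer with $|\bar Q_i-\bar Q_j|=\rho$ is excluded by expanding the energy as in \eqref{eq601}--\eqref{eq605} to extract the attractive term $-\tfrac14\gamma_1 w(\rho)$, contradicting the strict inequality $\mathcal C_k>\mathcal C_{k-1}+I(w)$. The only detail you leave implicit — and which the paper spells out — is the ball-packing count (at most $6^N$ neighbours at each dyadic distance scale) that bounds $\sum_{j\neq i}w(|Q_j-Q_k|)$ by $Ce^{-\rho}$ uniformly in $k$, so that the error terms are genuinely dominated by $\tfrac14\gamma_1 w(\rho)$.
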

\begin{proof}
We prove it by contradiction again.
If $\mathbf{Q}=(\bar{Q}_1,\cdots,\bar{Q}_k) \in \partial \Lambda_k$, then there exists $(i,j)$ such that $|\bar{Q}_i-\bar{Q}_j|=\rho$.  Without loss of generality, we assume $(i,j)=(i,k)$. Then follow the estimates in (\ref{eq601}), (\ref{eq602}), (\ref{eq603}) and (\ref{eq604}), we have

\begin{eqnarray}
\mathcal{C}_{k+1}&&=J(u_{\bar{Q}_1,\cdots,\bar{Q}_k})\\
&&\leq \mathcal{C}_{k-1}+I(w)+\frac{1}{2}\int_{\R^N}\delta V w^2_{Q_k}dx\nonumber\\
&&-\frac{1}{4}\gamma \sum_{i=1}^{k-1}e^{-|Q_k-Q_i|}+O(e^{-\xi \rho}\sum_{i=1}^{k-1}e^{- |Q_k-Q_i|})+O(\delta)\nonumber\\
&&\leq \mathcal{C}_{k-1}+I(w)\nonumber\\
&&+O(\delta)-\frac{1}{4}\gamma \sum_{i=1}^{k-1}w(|Q_k-Q_i|)+O(e^{-\xi \rho}\sum_{i=1}^{k-1}w( |Q_k-Q_i|)).\nonumber
\end{eqnarray}

By the definition of the configuration set, we observe that given a a ball of size $\rho$,
 there are at most $c_N:=6^N$ number of non-overlapping balls of size $\rho$ surrounding this ball. Since $|\bar{Q}_i-\bar{Q}_k|=\rho$, we have

\begin{eqnarray*}
\sum_{i=1}^{k-1}w(|Q_k-Q_i|)=w(|Q_i-Q_k|)+\sum_{j\neq i}w(|Q_j-Q_k|)\\
\end{eqnarray*}
and
\begin{eqnarray*}
\sum_{j\neq i}w(|Q_j-Q_k|)&\leq& Ce^{-\rho}+C_Ne^{-\rho-\frac{\rho}{2}}+\cdots+C_N^je^{-\rho-\frac{j\rho}{2}}+\cdots\\
&\leq& Ce^{-\rho}\sum_{j=0}^\infty e^{j\ln C_N-\frac{\rho}{2}}\\
&\leq & Ce^{-\rho},
\end{eqnarray*}
if $c_N<e^{\frac{\rho}{2}}$, which is true for $\rho$ large enough.

So

\begin{eqnarray}
\mathcal{C}_{k+1}
&&\leq \mathcal{C}_{k-1}+I(w)+c\delta-\frac{1}{4}\gamma w(\rho)+O(e^{-(1+\xi) \rho})\\
&&< \mathcal{C}_{k-1}+I(w).\nonumber
\end{eqnarray}

This reaches a contradiction with Lemma \ref{lemma601}.
\end{proof}

\subsection{Proof of Theorem \ref{theo1}}\label{2.4}
\setcounter{equation}{0}
In this section, we apply the results in Section \ref{2.1}, \ref{2.2} and Section \ref{2.3} to prove Theorem \ref{theo1}.

\medskip

\noindent
{\it Proof of Theorem \ref{theo1}:} By Proposition \ref{p401} in Section 2, there exists $\rho_0$ such that for $\rho>\rho_0$, we have $C^1$ map which, to any $\mathbf{Q}^\circ \in \Lambda_k$, associates $\phi_{\mathbf{Q}^\circ}$ such that
\begin{equation}
S(w_{\mathbf{Q}^\circ}+\phi_{\mathbf{Q}^\circ})=\sum_{i=1,\cdots,k,j=1,\cdots,n}c_{ij}Z_{ij},\ \
\int_{\R^N}\phi_{\mathbf{Q}^\circ}Z_{ij}dx=0,
\end{equation}
for some constants $\{c_{ij}\}\in \R^{kN}$.

From Proposition \ref{p602} in Section \ref{2.3}, there is a $\mathbf{Q}\in \Lambda_k^\circ$ that achieves the maximum for the maximization problem in Proposition \ref{p602}. Let $u_{\mathbf{Q}^\circ}=w_{\mathbf{Q}^\circ}+\phi_{\mathbf{Q}^\circ}$. Then we have
\begin{equation}
D_{Q_{ij}}|_{Q_i=Q_i^\circ}\mathcal{M}(\mathbf{Q}^\circ)=0,\  \ i=1,\cdots,k, \ \ j=1,\cdots,N.
\end{equation}
Hence we have
\begin{eqnarray*}
&&\int_{\R^N}\nabla u_{\mathbf{Q}} \nabla \frac{\partial (w_{\mathbf{Q}}+\phi_{\mathbf{Q}})}{\partial Q_{ij}}|_{Q_i=Q_i^\circ}+(1+\delta V)u_{\mathbf{Q}}\frac{\partial (w_{\mathbf{Q}}+\phi_{\mathbf{Q}})}{\partial Q_{ij}}|_{Q_i=Q_i^\circ}\\
&&-f(u_{\mathbf{Q}})\frac{\partial (w_{\mathbf{Q}}+\phi_{\mathbf{Q}})}{\partial Q_{ij}}|_{Q_i=Q_i^\circ}=0,
\end{eqnarray*}
which gives
\begin{equation}\label{e601}
\sum_{i=1,\cdots,k, \ j=1,\cdots,N}c_{ij}\int_{\R^N} Z_{ij}\frac{\partial (w_{\mathbf{Q}}+\phi_{\mathbf{Q}})}{\partial Q_{sl}}|_{Q_s=Q_s^\circ}=0,
\end{equation}
for $s=1,\cdots,k, l=1,\cdots,N$.
We claim that (\ref{e601}) is a diagonally dominant system.  In fact, since $\int_{\R^N} \phi_{\mathbf{Q}}Z_{sl}dx=0$, we have that
\begin{equation*}
\int_{\R^N}Z_{sl}\frac{\partial \phi_{\mathbf{Q}}}{\partial Q_{ij}}|_{Q_i=Q_i^\circ}
=-\int_{\R^N}\phi_{\mathbf{Q}}\frac{\partial Z_{sl}}{\partial Q_{ij}}=0, \mbox{ if }s\neq i.
\end{equation*}
If $s=i$, we have
\begin{eqnarray*}
|\int_{\R^N}Z_{il}\frac{\partial \phi_{\mathbf{Q}}}{\partial Q_{ij}}|_{Q_i=Q_i^\circ}|
=|-\int_{\R^N}\phi_{\mathbf{Q}}\frac{\partial Z_{il}}{\partial Q_{ij}}|\\
\leq C\|\phi_{\mathbf{Q}}\|_*=O(e^{-\frac{\rho}{2}(1+\xi)}).
\end{eqnarray*}
For $s\neq i$, we have
\begin{equation*}
\int_{\R^N}Z_{sl}\frac{\partial w_{\mathbf{Q}}}{\partial Q_{ij}}=O(e^{-\frac{|Q_i-Q_s|}{2}}).
\end{equation*}
For $s=i$, recall the definition of $Z_{ij}$, we have
\begin{equation}\label{diagonal}
\int_{\R^N}Z_{sl}\frac{\partial w_{\mathbf{Q}}}{\partial Q_{sj}}=
-\delta_{lj}\int_{\R^N}(\frac{\partial w}{\partial y_j})^2+O(e^{-\rho}).
\end{equation}
For each $(s,l)$, the off-diagonal term gives
\begin{eqnarray}\label{offdiagonal}
&&\sum_{s\neq i}\int_{\R^N}Z_{sl}\frac{\partial (w_{\mathbf{Q}}+\phi_{\mathbf{Q}}) }{\partial Q_{ij}}|_{Q_i=Q_i^\circ}+\sum_{s=i,l\neq j}\int_{\R^N}Z_{sl}\frac{\partial (w_{\mathbf{Q}}+\phi_{\mathbf{Q}})}{\partial Q_{sj} } |_{Q_i=Q_i^\circ}\nonumber\\
&&=(O(e^{-\frac{\rho}{2}})+O(e^{-\rho}))\\
&&=O(e^{-\frac{1}{2}\rho}).\nonumber
\end{eqnarray}

So from (\ref{diagonal}) and (\ref{offdiagonal}), we can see that equation (\ref{e601}) becomes a system of homogeneous equations for $c_{sl}$, and the matrix of the system is nonsingular. So $c_{sl}=0$ for $s=1,\cdots,k, l=1,\cdots,N$. Hence $u_{\mathbf{Q}^\circ}=w_{\mathbf{Q}^\circ}+\phi_{\mathbf{Q}^\circ}$ is a solution of (\ref{p-1}).

Similar to the argument in Section 6 of \cite{LNW}, one can get that $u_{\mathbf{Q}^\circ}>0$ and it has exactly $k$ local maximum points for $\rho $ large enough.

\section{Synchronized vector solutions and the proof of Theorem \ref{theo2}}\label{3}

In this section, we consider the elliptic system (\ref{p-2}) and prove Theorem \ref{theo2}.

\subsection{Notations and Liapunov-Schmidt reduction}\label{3.1}

Let $N \leq 3$ and $w$ be the unique solution of
\begin{equation}
\left\{\begin{array}{l}
\Delta w-w+w^3=0,\\
w(0)=\max_{x\in \R^N} w(x), \ w\to 0 \mbox{ as }|x|\to \infty.
\end{array}
\right.
\end{equation}
It is known that the following asymptotic behavior holds
\begin{equation}
w(r)=A_Nr^{-\frac{N-1}{2}}e^{-r}(1+O(\frac{1}{r})),\
w'(r)=-A_Nr^{-\frac{N-1}{2}}e^{-r}(1+O(\frac{1}{r})),
\end{equation}
for $r$ large, where $A_N>0$ is a constant.

Note that the limit system as $\delta\to 0$ for (\ref{p-2}) is
\begin{equation}\label{limit}
\left\{\begin{array}{l}
-\Delta u+u=\mu_1u^3+\beta v^2u,\\
-\Delta v+v=\mu_2v^3+\beta u^2v,
\end{array}
\right.
\end{equation}
and that
\begin{equation}
\label{UV}
(U,V)=(\alpha w,\gamma w)
\end{equation}
solves (\ref{limit}) provided that $\beta>\max \{\mu_1,\mu_2\}$ or $-\sqrt{\mu_1\mu_2}<\beta<\min \{\mu_1,\mu_2\}$, where
\begin{equation}\label{alpha}
\alpha=\sqrt{\frac{\mu_2-\beta}{\mu_1\mu_2-\beta^2}}, \ \gamma=\sqrt{\frac{\mu_1-\beta}{\mu_1\mu_2-\beta^2}}.
\end{equation}

(It has been proved in \cite{WYO} that for $\beta> \max \{ \mu_1, \mu_2\}$, all solutions to (\ref{limit}) are given by (\ref{UV}).)

We will use $(U,V)$ as the building blocks for the solution of (\ref{p-2}).

Let $\rho>0$  and the configuration space $\Lambda_k$ be defined as in Section \ref{1}. For $\mathbf{Q}_k=(Q_1,\cdots,Q_k)\in \Lambda_k$, we define
\begin{equation}
(U_{Q_i},V_{Q_i})=(U(x-Q_i),V(x-Q_i)),
\end{equation}
and the approximate solution to be
\begin{equation}
U_{\mathbf{Q}_k}=\sum_{i=1}^kU_{Q_i},\
V_{\mathbf{Q}_k}=\sum_{i=1}^kV_{Q_i}.
\end{equation}

Denote by
\begin{equation}
S\vect{u}{v}=\left(\begin{array}{l}
\Delta u-(1+\delta a(x))u+\mu_1u^3+\beta v^2u,\\
\Delta v-(1+\delta b(x))v+\mu_2v^3+\beta u^2v
\end{array}
\right).
\end{equation}

For $f=\vect{f_1}{f_2}, g=\vect{g_1}{g_2}$, we denote by
\begin{equation}
\langle f,g\rangle=\int_{\R^N}f_1g_1+f_2g_2dx.
\end{equation}

The proof of Theorem \ref{theo2} is similar to the proof of  Theorem \ref{theo1}. Fixing $\mathbf{Q}_k=(Q_1,\cdots, Q_k)\in \Lambda_k$, our main idea is to use $(U_{\mathbf{Q}_k},V_{\mathbf{Q}_k})$ as the approximate solution. First using the Liapunov-Schmidt
reduction, we can show that there exists a constant $\rho_0$, such that for $\rho\geq\rho_0$, and $\delta<c_\rho$, for some constant $c_\rho$ depend on $\rho$ but independent of $k$ and $\mathbf{Q}_k$, we can find a $(\phi_{\mathbf{Q}_k},\psi_{\mathbf{Q}_k})$ such that
\begin{equation}
S(\vect{U_{\mathbf{Q}_k}}{V_{\mathbf{Q}_k}}
+\vect{\phi_{\mathbf{Q}_k}}{\psi_{\mathbf{Q}_k}})=\sum_{i=1,\cdots,k,j=1,\cdots,N}c_{ij}\bar{Z}_{ij},
\end{equation}
where $\bar{Z}_{ij}$ is defined as
\begin{equation}\label{zijs}
\bar{Z}_{ij}=\vect{\bar{Z}_{ij,1}}{\bar{Z}_{ij,2}}=\vect{\frac{\partial U_{Q_i}}{\partial x_j}\chi_i(x)}{\frac{\partial V_{Q_i}}{\partial x_j}\chi_i(x)}, \mbox{ for } i=1,\cdots,k, \ j=1,\cdots,N,
\end{equation}
where $\chi_i(x)=\chi(\frac{2|x-Q_i|}{(\rho-1)})$ and $\chi(t)$ is a cut off function , such that $\chi(t)=1$ for $|t|\leq 1$ and $\chi(t)=0$ for $|t|\geq \frac{\rho^2}{\rho^2-1}$. We can show that $(\phi_{\mathbf{Q}_k},\psi_{\mathbf{Q}_k})$ is $C^1$ in $\mathbf{Q}_k$. After that , for any $k$, we define a new function
\begin{equation}
\mathcal{M}(\mathbf{Q}_k)=J(\vect{U_{\mathbf{Q}_k}}{V_{\mathbf{Q}_k}}
+\vect{\phi_{\mathbf{Q}_k}}{\psi_{\mathbf{Q}_k}}),
\end{equation}
we maximize $\mathcal{M}(\mathbf{Q}_k)$ over $\bar{\Lambda}_k$.

\medskip 

For large $\rho$, and fixed points $\mathbf{Q}_k\in \Lambda_k$, we first show solvability in $\{\vect{\phi}{\psi}$, $\{c_{ij}\}\}$  of
the non linear projected problem
\begin{equation}\label{e401s}
\left\{\begin{array}{l}
\Delta (U_{\mathbf{Q}_k}+\phi_{\mathbf{Q}_k})-(1+\delta a(x))(U_{\mathbf{Q}_k}+\phi_{\mathbf{Q}_k})
+\mu_1(U_{\mathbf{Q}_k}+\phi_{\mathbf{Q}_k})^3\\
+\beta (V_{\mathbf{Q}_k}+\psi_{\mathbf{Q}_k})^2(U_{\mathbf{Q}_k}+\phi_{\mathbf{Q}_k})=\sum_{i=1,\cdots,k,j=1,\cdots,N}c_{ij}
\bar{Z}_{ij,1},\\
\Delta (V_{\mathbf{Q}_k}+\psi_{\mathbf{Q}_k})-(1+\delta b(x))(V_{\mathbf{Q}_k}+\psi_{\mathbf{Q}_k})
+\mu_2(V_{\mathbf{Q}_k}+\psi_{\mathbf{Q}_k})^3\\
+\beta (U_{\mathbf{Q}_k}+\phi_{\mathbf{Q}_k})^2(V_{\mathbf{Q}_k}+\psi_{\mathbf{Q}_k})=\sum_{i=1,\cdots,k,j=1,\cdots,N}c_{ij}
\bar{Z}_{ij,2},\\
\langle \vect{\phi_{\mathbf{Q}_k}}{\psi_{\mathbf{Q}_k}},\vect{\bar{Z}_{ij,1}}{\bar{Z}_{ij,2}}\rangle=0\ \mbox{ for }i=1,\cdots,k,j=1,\cdots,N.
\end{array}
\right.
\end{equation}

 \medskip

Define
\begin{equation}\label{W}
W(x)=\sum_{\mathbf{Q}_k\in \Lambda_k}e^{-\eta|x-Q_i|},
\end{equation}
and the norm to be
\begin{equation}
 \quad \| h \|_{**} =\sup_{x \in \R^N} |   W(x)^{-1}  h_1(x) |+\sup_{x \in \R^N} |   W(x)^{-1}  h_2(x) |.
\end{equation}

First we need the following non-degeneracy result:
\begin{lemma}\label{lemma201s}
There exists $\beta^*>0$, such that for $\beta\in (-\beta^*,0)\cup (0, \min\{\mu_1,\mu_2\})\cup (\max\{\mu_1,\mu_2\},\infty)$, $(U,V) $ is non-degenerate for the system (\ref{limit}) in $H^1(\R^N)$ in the sense that the kernel is given by
\begin{equation}
\operatorname{Span}\{(\frac{\partial U}{\partial x_j},\frac{\partial V}{\partial x_j})|j=1,\cdots,N\}.
\end{equation}
\end{lemma}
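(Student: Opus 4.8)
\medskip
\noindent\textbf{Plan of proof.}
The strategy is to diagonalize the linearized system and thereby reduce the non-degeneracy of $(U,V)=(\alpha w,\gamma w)$ to the scalar non-degeneracy of $w$. Linearizing (\ref{limit}) at $(U,V)$, a pair $(\phi,\psi)\in H^1(\R^N)\times H^1(\R^N)$ lies in the kernel iff
\[
-\Delta\phi+\phi=(3\mu_1\alpha^2+\beta\gamma^2)\,w^2\phi+2\beta\alpha\gamma\,w^2\psi,\qquad
-\Delta\psi+\psi=(3\mu_2\gamma^2+\beta\alpha^2)\,w^2\psi+2\beta\alpha\gamma\,w^2\phi .
\]
Substituting $(U,V)=(\alpha w,\gamma w)$ into (\ref{limit}) and using $-\Delta w+w=w^3$ yields the identities $\mu_1\alpha^2+\beta\gamma^2=1=\mu_2\gamma^2+\beta\alpha^2$. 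With these I would introduce $\tau:=\alpha\phi+\gamma\psi$ and $\sigma:=\gamma\phi-\alpha\psi$; writing $L_0:=-\Delta+1-3w^2$, a short computation gives $L_0\phi=-2\beta\gamma\,w^2\sigma$ and $L_0\psi=2\beta\alpha\,w^2\sigma$, hence the decoupled equations
\[
L_0\tau=0,\qquad -\Delta\sigma+\sigma=\mu\,w^2\sigma,\qquad \mu:=3-2\beta(\alpha^2+\gamma^2).
\]
By the classical non-degeneracy of $w$ for $\Delta w-w+w^3=0$, the first equation forces $\tau=\sum_{j=1}^{N}b_j\,\frac{\partial w}{\partial x_j}$ for some constants $b_j$.

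It remains to prove $\sigma\equiv0$, which is the crux. I would use the following spectral picture for the family $-\Delta+1-\mu w^2$ on $H^1(\R^N)$: for $\mu\le 1$ it is positive definite (compare with $-\Delta+1-w^2\ge0$, whose kernel is $\operatorname{Span}\{w\}$); its kernel is $\operatorname{Span}\{w\}$ at $\mu=1$ and $\operatorname{Span}\{\frac{\partial w}{\partial x_j}\}$ at $\mu=3$, and $w$ has Morse index one, so the number of negative eigenvalues stays equal to one throughout $(1,3]$; since every eigenvalue of this family is strictly decreasing in $\mu$, no eigenvalue can vanish in the open interval $(1,3)$, so the operator is invertible there; finally $0$ is an isolated point of the spectrum at $\mu=3$, so it stays off the spectrum for $\mu$ in a right-neighbourhood $(3,\Lambda^*)$ of $3$, for some $\Lambda^*>3$. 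Consequently $-\Delta+1-\mu w^2$ is invertible — and so $\sigma=0$ — whenever $\mu\in(-\infty,3)\setminus\{1\}$ or $\mu\in(3,\Lambda^*)$.

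To conclude I would run the algebra on the three $\beta$-intervals in the statement. One computes $\alpha^2+\gamma^2=\frac{\mu_1+\mu_2-2\beta}{\mu_1\mu_2-\beta^2}$, which is positive on each of them; moreover $\mu=1\iff(\beta-\mu_1)(\beta-\mu_2)=0$ and $\mu=3\iff\beta=0$, so since $\beta\notin\{0,\mu_1,\mu_2\}$ the values $\mu=1,3$ are automatically excluded. For $\beta>0$ one has $\mu<3$, hence $\sigma=0$; for $\beta<0$ one has $\mu>3$ with $\mu\to3$ as $\beta\to0^-$, so choosing $\beta^*\in(0,\sqrt{\mu_1\mu_2})$ small enough forces $\mu\in(3,\Lambda^*)$ and again $\sigma=0$. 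With $\sigma=0$ we recover $(\phi,\psi)=\frac{1}{\alpha^2+\gamma^2}(\alpha\tau,\gamma\tau)=\sum_{j=1}^{N}\frac{b_j}{\alpha^2+\gamma^2}\big(\frac{\partial U}{\partial x_j},\frac{\partial V}{\partial x_j}\big)$, which is exactly the asserted kernel (the reverse inclusion being immediate by translation invariance).

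The main obstacle I anticipate is the spectral analysis of the scalar weighted problem for $\sigma$ — locating the spectrum of $-\Delta+1-\mu w^2$ near $\mu\in[1,3]$ and, above all, producing the gap $(3,\Lambda^*)$ above $\mu=3$; this gap is precisely what the smallness of $\beta^*$ is there to exploit in the range $\beta<0$, and it is the only place where smallness of $\beta^*$ enters. The remaining ingredients — the algebraic identities, the reduction to $\tau$ and $\sigma$, and the reconstruction of $(\phi,\psi)$ — are routine.
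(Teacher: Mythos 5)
Your proposal is correct. Note first that the paper itself gives no argument here: its ``proof'' of Lemma \ref{lemma201s} is a one-line citation to Proposition 2.3 of \cite{PW}, so you are supplying a self-contained proof where the paper defers to a reference. Your route --- verifying $\mu_1\alpha^2+\beta\gamma^2=\mu_2\gamma^2+\beta\alpha^2=1$, diagonalizing the linearization via $\tau=\alpha\phi+\gamma\psi$ and $\sigma=\gamma\phi-\alpha\psi$ so that $L_0\tau=0$ and $(-\Delta+1)\sigma=\mu w^2\sigma$ with $\mu=3-2\beta(\alpha^2+\gamma^2)$, and then reducing everything to the scalar weighted eigenvalue problem --- is in fact the standard argument for synchronized states and is essentially what \cite{PW} does, so in substance you have reconstructed the proof the paper points to. I checked the algebra: $L_0\phi=-2\beta\gamma w^2\sigma$ and $L_0\psi=2\beta\alpha w^2\sigma$ are right, as are the computations showing $\mu=1$ iff $(\beta-\mu_1)(\beta-\mu_2)=0$ and $\mu=3$ iff $\beta=0$, and the positivity of $\alpha^2+\gamma^2$ on each of the three $\beta$-intervals. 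The only place where your write-up is compressed is the spectral picture for $-\Delta+1-\mu w^2$: the clean way to phrase it is that the weighted eigenvalues $\{\mu_j\}$ of $(-\Delta+1)\sigma=\mu w^2\sigma$ form a discrete sequence (compactness of $(-\Delta+1)^{-1}(w^2\,\cdot)$, since $w\to0$), the principal one is $\mu_1=1$ (simple, eigenfunction $w>0$), the Morse-index-one property of $w$ says exactly one $\mu_j$ lies below $3$, so there are no weighted eigenvalues in $(1,3)$, and nondegeneracy of $w$ says the eigenvalue $3$ has multiplicity exactly $N$, which is precisely the spectral gap $(3,\Lambda^*)$ you need for the regime $\beta<0$ with $\beta^*$ small. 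Both classical inputs (nondegeneracy and Morse index one of the cubic ground state for $N\le3$) are standard, so the argument is complete.
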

\begin{proof}
For the proof, see the proof of Proposition 2.3 in \cite{PW}.
\end{proof}

From now on we will always assume that
\begin{equation}
\beta\in (-\beta^*,0)\cup (0, \min\{\mu_1,\mu_2\})\cup (\max\{\mu_1,\mu_2\},\infty).
\end{equation}

Similar as in Section \ref{2}, the following proposition is standard.
\begin{proposition} \label{p401s}
Given $0<\eta<1$. There exist positive numbers $\rho_0$, $C$ and $\xi >0$ such that for all $\rho\geq \rho_0$, and for any $\mathbf{Q}_k\in\Lambda_k$, $\delta <e^{-2\rho}$, there is a unique solution $(\vect{\phi_{\mathbf{Q}_k}}{\psi_{\mathbf{Q}_k}} , \{c_{ij}\} ) $  to problem (\ref{e401s}). Furthermore $(\phi_{\mathbf{Q}_k},\psi_{\mathbf{Q}_k})$ is $C^1$ in $\mathbf{Q}_k$ and we have
\begin{equation}
 \|(\phi_{\mathbf{Q}_k},\psi_{\mathbf{Q}_k}) \|_{**} \leq C\|S\vect{U_{\mathbf{Q}_k}}{V_{\mathbf{Q}_k}}\|_{**}\leq C e^{-\xi  \rho },
\label{est2s}\end{equation}
\begin{equation}
|c_{ij}|\leq Ce^{-\xi  \rho}.
\end{equation}
\end{proposition}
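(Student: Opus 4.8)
The plan is to establish Proposition \ref{p401s} by the standard Liapunov--Schmidt machinery, exactly parallel to Lemma \ref{lemma201} and Proposition \ref{p401} in the scalar case, with the only genuine novelty being that the linearized operator around $(U,V)=(\alpha w,\gamma w)$ is a $2\times 2$ system and has a bigger kernel (Morse index is larger), so one must invoke Lemma \ref{lemma201s} rather than the simple scalar nondegeneracy. First I would reformulate \eqref{e401s} in fixed-point form: writing $\vect{\phi}{\psi}$ and using the orthogonality constraints $\langle\vect{\phi}{\psi},\bar Z_{ij}\rangle=0$, the problem becomes $\mathbb{L}\vect{\phi}{\psi}=-S\vect{U_{\mathbf Q_k}}{V_{\mathbf Q_k}}-\mathbb{N}\vect{\phi}{\psi}+\sum c_{ij}\bar Z_{ij}$, where $\mathbb{L}$ is the linearization of the system at the approximate solution (a diagonal-dominant perturbation of $k$ decoupled copies of the limit linearization) and $\mathbb{N}$ collects the quadratic/cubic remainders $\mu_1[(U+\phi)^3-U^3-3U^2\phi]$, $\beta[(V+\psi)^2(U+\phi)-V^2U-\cdots]$, etc.

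Next I would prove the a priori estimate for the linear projected operator: there is $C$ independent of $k$, $\mathbf Q_k$, $\rho$ (large) and $\delta<e^{-2\rho}$ such that $\|\vect{\phi}{\psi}\|_{**}\le C\|\mathbb{L}\vect{\phi}{\psi}\|_{**}$ for $(\phi,\psi)$ satisfying the orthogonality. This is the heart of the matter. It is proved by contradiction/blow-up: if it fails, rescale around a point $Q_i^{(n)}$ where the weighted sup-norm is nearly attained, pass to a weak $H^1_{loc}$ limit $(\phi_\infty,\psi_\infty)$, which solves the homogeneous limit system $\mathbb{L}_\infty\vect{\phi_\infty}{\psi_\infty}=0$ with $\int(\phi_\infty\del_{x_j}U+\psi_\infty\del_{x_j}V)=0$; by the nondegeneracy Lemma \ref{lemma201s} the limit is zero, and then a barrier/comparison argument in the $W$-weighted norm (using that $W$ itself is a supersolution of $-\Delta+\tfrac12$ away from the $Q_i$, and that $\delta<e^{-2\rho}$ makes the potential terms negligible) upgrades the local vanishing to global vanishing, contradicting that the norm was $\approx 1$. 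The point $N\le 3$ enters to control the cubic/quartic terms in the weighted norm; the smallness of $e^{-\eta\rho}$ (uniform in $k$ by the packing/summability estimate $\sum_j e^{-\eta|Q_i-Q_j|}\le C$) handles the interaction terms in $\mathbb{L}$.

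Then I would estimate the error $\|S\vect{U_{\mathbf Q_k}}{V_{\mathbf Q_k}}\|_{**}\le Ce^{-\xi\rho}$: as in Lemma \ref{lemma201}, write $S\vect{U_{\mathbf Q_k}}{V_{\mathbf Q_k}}$ as $(-\delta a\,U_{\mathbf Q_k}+\mu_1[U_{\mathbf Q_k}^3-\sum U_{Q_i}^3]+\beta[V_{\mathbf Q_k}^2U_{\mathbf Q_k}-\sum V_{Q_i}^2U_{Q_i}], \text{analogous})$; each cross term between distinct bumps is $O(e^{-\xi\rho})$ in the $W$-weighted norm split over the regions $|x-Q_j|\le\rho/2$ and its complement, and $\delta<e^{-2\rho}$ kills the potential term. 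Finally I would run the contraction mapping argument: $\mathbb{N}$ is a contraction on a ball of radius $Ce^{-\xi\rho}$ in $\|\cdot\|_{**}$ because it is superlinear (quadratic or cubic) in $(\phi,\psi)$, solve for $(\phi_{\mathbf Q_k},\psi_{\mathbf Q_k})$, recover $c_{ij}$ by testing against $\bar Z_{ij}$ and inverting the resulting nearly-diagonal $kN\times kN$ Gram-type matrix to get $|c_{ij}|\le Ce^{-\xi\rho}$, and obtain $C^1$-dependence on $\mathbf Q_k$ by differentiating the fixed-point identity and applying the same linear theory to the derivative. The main obstacle is the uniform-in-$k$ invertibility of $\mathbb{L}$ in the weighted norm for a system with higher-dimensional kernel; everything else follows the template of Section \ref{2.1} and the cited references \cite{AWZ,LNW,PW}.
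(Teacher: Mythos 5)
Your proposal is correct and follows exactly the route the paper intends: the paper omits the proof of Proposition \ref{p401s}, declaring it ``standard'' by analogy with the scalar Proposition \ref{p401} (citing \cite{LNW}, \cite{AWZ}), with the only system-specific ingredient being the nondegeneracy of $(U,V)$ from Lemma \ref{lemma201s} (via \cite{PW}), which is precisely the point your blow-up argument for the uniform invertibility of the projected linearized operator hinges on. The error estimate, contraction mapping, recovery of the $c_{ij}$ from a nearly diagonal system, and $C^1$-dependence are all as in Section \ref{2.1}, so no further comment is needed.
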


\subsection{A Secondary Liapunov Schmidt Reduction}\label{3.2}
 Similar to the estimate in Section \ref{2.2}, we have the key estimate on the difference between the solutions in the $k-$th step and $ (k+1)-$th step. From now on, we choose $\eta>\frac{1}{2}$.

For $(Q_1,\cdots,Q_{k})\in \Lambda_{k}$,  we  denote $\vect{u_{ Q_1, \cdots, Q_k}}{v_{Q_1,\cdots,Q_k}}$ as $ \vect{U_{ Q_1,..., Q_k}+ \phi_{Q_1, ..., Q_k}}{V_{Q_1,\cdots,Q_k}+\psi_{Q_1,\cdots,Q_k}}$, where $ \vect{\phi_{ Q_1, \cdots, Q_k}}{\psi_{Q_1,\cdots,Q_k}}$ is the unique solution given by Proposition \ref{p401s}.

We now write

\begin{eqnarray}
\vect{u_{ Q_1, \cdots, Q_{k+1}}}{v_{Q_1,\cdots,Q_{k+1}}}&=&\vect{u_{ Q_1, \cdots, Q_k}}{v_{Q_1,\cdots,Q_k}}
+\vect{U_{Q_{k+1}}}{V_{Q_{k+1}}}+\varphi_{k+1}\nonumber\\
&=&\vect{\bar{U}}{\bar{V}}+\vect{\varphi_{k+1,1}}{\varphi_{k+1,2}}
\end{eqnarray}
where
$$\vect{\bar{U}}{\bar{V}}= \vect{u_{ Q_1, \cdots, Q_k}}{v_{Q_1,\cdots,Q_k}}
+\vect{U_{Q_{k+1}}}{V_{Q_{k+1}}}.$$

We have the following estimate for $\varphi_{k+1}$:

 \begin{lemma}\label{lemma501s}
 Let $\rho$, $\delta$ be as in Proposition \ref{p401s}. Then it holds
 \begin{eqnarray}
\label{keyvars}
&&\int_{\R^N} (|\nabla \varphi_{k+1,1}|^2 + \varphi_{k+1,1}^2 ) +(|\nabla \varphi_{k+1,2}|^2 + \varphi_{k+1,2}^2 )dx\\
&&\leq C(e^{-\xi  \rho }\sum_{i=1}^kw(|Q_{k+1}-Q_i|)\nonumber\\
&&+\delta^2(\int_{\R^N}a^2U^2_{Q_{k+1}}+b^2V^2_{Q_{k+1}}dx
+(\int_{\R^N}|a|U_{Q_{k+1}}+|b|V_{Q_{k+1}}dx)^2),\nonumber
 \end{eqnarray}
 for some constant $C>0,\xi>0$ independent of $\rho ,k, \eta$ and $\mathbf{Q}\in \Lambda_{k+1}$.
 \end{lemma}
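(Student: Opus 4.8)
\textbf{Proof plan for Lemma \ref{lemma501s}.}
The strategy is to mirror exactly the proof of Lemma \ref{lemma501}, carrying the extra component through the argument. Write the equation satisfied by $\varphi_{k+1}=\vect{\varphi_{k+1,1}}{\varphi_{k+1,2}}$ in the form $\bar L\varphi_{k+1}=\bar S+\sum_{i,j}c_{ij}\bar Z_{ij}$, where $\bar L$ is the linearized operator of the system around $\vect{\bar U}{\bar V}$ (with the usual difference-quotient definition of the nonlinear coefficients so that $\bar L$ is well-defined even where a component of $\varphi_{k+1}$ vanishes), and
\[
\bar S=\vect{\mu_1(\bar U^3-u_{\mathbf Q_k}^3-U_{Q_{k+1}}^3)+\beta(\bar V^2\bar U-v_{\mathbf Q_k}^2 u_{\mathbf Q_k}-V_{Q_{k+1}}^2U_{Q_{k+1}})-\delta a U_{Q_{k+1}}}{\mu_2(\cdots)+\beta(\cdots)-\delta b V_{Q_{k+1}}}.
\]
As in \eqref{s}, using Proposition \ref{p401s} one gets $\|\bar S\|_{L^2(\R^N)}^2\le C(e^{-\xi\rho}\sum_{i=1}^k w(|Q_{k+1}-Q_i|)+\delta^2\int_{\R^N}a^2U_{Q_{k+1}}^2+b^2V_{Q_{k+1}}^2\,dx)$, since the cross terms between the $k$-cluster and the new bump are controlled by $\int f(w_{Q_i})w_{Q_{k+1}}\sim w(|Q_i-Q_{k+1}|)$ and by the exponential smallness of $\phi_{\mathbf Q_k},\psi_{\mathbf Q_k}$.

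Next, decompose $\varphi_{k+1}=\Psi+\sum_{i=1}^{k+1}c_i\Phi_i+\sum_{i,j}d_{ij}\bar Z_{ij}$, where now $\Phi_i$ is the cutoff of the (vector-valued) positive eigenfunction of the linearized system operator at $(U,V)$ — here is where Lemma \ref{lemma201s} (nondegeneracy) plus the existence of a simple positive principal eigenvalue $\lambda_1$ of the system linearization is invoked; the orthogonality $\langle \Psi,\Phi_i\rangle=\langle\Psi,\bar Z_{ij}\rangle=0$ fixes $c_i,d_{ij}$. As in \eqref{d}, because $\varphi_{k+1}=(\phi_{\mathbf Q_{k+1}},\psi_{\mathbf Q_{k+1}})-(\phi_{\mathbf Q_k},\psi_{\mathbf Q_k})$ and both solve their orthogonality constraints, $d_{ij}=0$ for $i\le k$ and $|d_{k+1,j}|\le Ce^{-\xi\rho}\sum_{i=1}^k e^{-\eta|Q_i-Q_{k+1}|}$. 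Testing the decomposed equation against $\Phi_i$ gives, via $\langle\bar L\Phi_i,\Phi_i\rangle=-\lambda_1\|(\text{eigenfunction})\|^2_{L^2}+O(e^{-(1+\xi)\rho/2})$, the bound $|c_i|\le Ce^{-\xi\rho}e^{-\eta|Q_i-Q_{k+1}|}+\delta|\langle \delta^{-1}\bar S_{\rm pot},\Phi_i\rangle|+e^{-\xi\rho}\|\Psi\|_{H^1(B_{\rho/2}(Q_i))}$ (analogue of \eqref{c}), where $\bar S_{\rm pot}$ is the $-\delta(aU_{Q_{k+1}},bV_{Q_{k+1}})$ part.

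Then test against $\Psi$ and prove the coercivity estimate $\langle -\bar L\Psi,\Psi\rangle\ge c_0\|\Psi\|_{H^1(\R^N)}^2$. Away from $\cup_i B_{(\rho-1)/2}(Q_i)$ this is immediate from the exponential decay of the approximate solution; inside the balls one argues by contradiction, extracting a weak $H^1$ limit $\Psi_\infty$ that satisfies the linearized system and is orthogonal to both the eigenfunction and all $(\partial_{x_j}U,\partial_{x_j}V)$, hence $\Psi_\infty=0$ by Lemma \ref{lemma201s}, contradicting the normalization — this is the one place the vector structure genuinely matters, and it is the main obstacle, though it is handled exactly as in the scalar case once Lemma \ref{lemma201s} is in hand. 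Combining the coercivity with the tested equation yields $\|\Psi\|_{H^1}\le C(\sum|d_{ij}|+e^{-\xi\rho}\sum_i e^{-\eta|Q_i-Q_{k+1}|}+\delta\int_{\R^N}|a|U_{Q_{k+1}}+|b|V_{Q_{k+1}}\,dx+\|\bar S\|_{L^2})$, and then assembling $\|\varphi_{k+1}\|_{H^1}^2\le C(\|\Psi\|_{H^1}^2+\sum c_i^2+\sum d_{ij}^2)$ together with the inequality $(\sum_{i=1}^k e^{-\eta|Q_i-Q_{k+1}|})^2\le c\sum_{i=1}^k w(|Q_i-Q_{k+1}|)$ (valid since $\eta>\tfrac12$ and by the spacing in $\Lambda_{k+1}$) and the bound on $\|\bar S\|_{L^2}$ gives \eqref{keyvars}. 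All constants are tracked to be independent of $k,\eta,\rho$ and $\mathbf Q_{k+1}$, exactly as in Lemma \ref{lemma501}.
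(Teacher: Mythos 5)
Your overall architecture is the right one and matches the paper's: write $\bar L\varphi_{k+1}=\bar S+\sum c_{ij}\bar Z_{ij}$, estimate $\|\bar S\|_{L^2}$, decompose $\varphi_{k+1}$ into a ``good'' part $\Psi$ plus projections onto the unstable and translational directions, prove coercivity of $-\bar L$ on the orthogonal complement by a compactness/contradiction argument, and reassemble. However, there is one genuine gap: you decompose with a \emph{single} positive eigenfunction per point, invoking ``a simple positive principal eigenvalue $\lambda_1$ of the system linearization.'' For the system this is false in part of the admissible range of $\beta$. Linearizing (\ref{limit}) at $(U,V)=(\alpha w,\gamma w)$ and diagonalizing the coupling matrix, one branch is $\Delta-1+3w^2$ (Morse index one, as in the scalar case), but the second branch is $\Delta-1+\nu_2 w^2$ with $\nu_2-1=2\gamma^2(\mu_2-\beta)$, which is positive whenever $\beta<\min\{\mu_1,\mu_2\}$; so for $\beta\in(-\beta^*,0)\cup(0,\min\{\mu_1,\mu_2\})$ the bump has Morse index $K=2$. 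This is exactly what the paper flags as ``the main technical difference'' between the scalar and vector problems. If you only subtract one eigenfunction $\Phi_i$ per point, the remainder $\Psi$ is not orthogonal to the second unstable direction, the weak limit $\Psi_\infty$ in your contradiction argument can be a nonzero second eigenfunction consistent with $Q(\Psi_\infty)\ge 0$, and the coercivity $\langle-\bar L\Psi,\Psi\rangle\ge c_0\|\Psi\|_{H^1}^2$ fails. The fix, which is what the paper does, is to enumerate all the positive eigenvalues $\lambda_1,\dots,\lambda_K$ of the vector linearization, set $\phi_{il}=\chi_i\phi_l(x-Q_i)$ for $l=1,\dots,K$, and decompose $\varphi_{k+1}=\Psi+\sum_{i,l}\ell_{il}\phi_{il}+\sum_{i,j}d_{ij}\bar Z_{ij}$, estimating each $\ell_{il}$ by testing against $\phi_{il}$.

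A secondary, minor point: your claim that $d_{ij}=0$ exactly for $i\le k$ carries over from the scalar case only if the cutoff eigenfunctions are exactly $L^2$-orthogonal to the $\bar Z_{ij}$; in the paper's vector setting one only gets $|d_{ij}|\le ce^{-\xi\rho}\sum_l|\ell_{il}|$, which is harmless but should be tracked since it feeds back into the bound on $\|\Psi\|_{H^1}$.
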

\begin{proof}
To prove (\ref{keyvars}), we need to perform a further decomposition.

From the non-degeneracy result of $(U,V)$, we have that there are finite many positive eigenvalues to the following
linearized operator:
\begin{equation}
\left(\begin{array}{c}
\Delta \phi_{j,1}-\phi_{j,1}+3\mu_1U^2\phi_{j,1}+\beta V^2\phi_{j,1}+2\beta UV\phi_{j,2}\\
\Delta \phi_{j,2}-\phi_{j,2}+3\mu_2V^2\phi_{j,2}+\beta U^2\phi_{j,2}+2\beta UV\phi_{j,1}
\end{array}
\right)=\lambda_j\vect{\phi_{j,1}}{\phi_{j,2}}
\end{equation}
and the eigenfunctions $\phi_j$ are exponential decay. Assume that $\lambda_j>0$ for $j=1,\cdots,K$. Denote by $\phi_{ij}=\chi_i\phi_j (x-Q_i)$, where $ \chi_i$ is the cut-off function introduced in Section 2 and $\phi_j=\vect{\phi_{j,1}}{\phi_{j,2}}$.

By the equations satisfied by $\varphi_{k+1}$, we have

\begin{equation}\label{varphis}
\bar{L}\varphi_{k+1}=\bar{S}+\sum_{i=1,\cdots,k+1, j=1,\cdots,N}c_{ij}\bar{Z}_{ij}
\end{equation}
for some constants $\{c_{ij}\}$, where
\begin{eqnarray*}
&&\bar{L}\vect{\varphi_{k+1,1}}{\varphi_{k+1,2}}\\
&&=\left(\begin{array}{l}
\Delta \varphi_{k+1,1}-(1+\delta a)\varphi_{k+1,1}+3\mu_1\tilde{U}^2\varphi_{k+1,1}+2\beta \tilde{V}\bar{U}\varphi_{k+1,2}+\beta(\bar{V}+\varphi_{k+1,2})^2\varphi_{k+1,1}\\
\Delta \varphi_{k+1,2}-(1+\delta b)\varphi_{k+1,2}+3\mu_2\hat{V}^2\varphi_{k+1,2}+2\beta \hat{U}\bar{V}\varphi_{k+1,1}+\beta(\bar{U}+\varphi_{k+1,1})^2\varphi_{k+1,2}
\end{array}
\right),
\end{eqnarray*}
and
\begin{eqnarray*}
&&3\tilde{U}^2=\left\{\begin{array}{l}
\frac{(\bar{U}+\varphi_{k+1,1})^3-\bar{U}^3}{ \varphi_{k+1,1}}, \ \mbox{if} \ \varphi_{k+1,1} \not =0\\
3\bar{U}^2, \ \mbox{if} \ \varphi_{k+1,1}=0,
\end{array}
\right.\\
&&2\tilde{V}=\left\{\begin{array}{l}
\frac{(\bar{V}+\varphi_{k+1,2})^2-\bar{V}^2}{ \varphi_{k+1,2}}, \ \mbox{if} \ \varphi_{k+1,2} \not =0\\
2\bar{V}, \ \mbox{if} \ \varphi_{k+1,2}=0,
\end{array}
\right.\\
&&3\hat{V}^2=\left\{\begin{array}{l}
\frac{(\bar{V}+\varphi_{k+1,2})^3-\bar{V}^3}{ \varphi_{k+1,2}}, \ \mbox{if} \ \varphi_{k+1,2} \not =0\\
3\bar{V}^2, \ \mbox{if} \ \varphi_{k+1,2}=0,
\end{array}
\right.\\
&&2\hat{U}=\left\{\begin{array}{l}
\frac{(\bar{U}+\varphi_{k+1,1})^2-\bar{U}^2}{ \varphi_{k+1,1}}, \ \mbox{if} \ \varphi_{k+1,1} \not =0\\
2\bar{U}, \ \mbox{if} \ \varphi_{k+1,1}=0,
\end{array}
\right.
\end{eqnarray*}
and
\begin{eqnarray*}
\bar{S}&=&\left(\begin{array}{l}
\mu_1[(\bar{U}^3-u_{Q_1,\cdots,Q_k}^3-U^3_{Q_{k+1}})]+\beta[\bar{V}^2\bar{U}-v_{Q_1,\cdots,Q_k}^2u_{Q_1,\cdots,Q_k
}-V_{Q_{k+1}}^2U_{Q_{k+1}}]\\
\mu_2[(\bar{V}^3-v_{Q_1,\cdots,Q_k}^3-V^3_{Q_{k+1}})]+\beta[\bar{U}^2\bar{V}-u_{Q_1,\cdots,Q_k}^2v_{Q_1,\cdots,Q_k
}-U_{Q_{k+1}}^2V_{Q_{k+1}}]
\end{array}
\right)\\
&-&\delta\vect{aU_{Q_{k+1}}}{bV_{Q_{k+1}}}.
\end{eqnarray*}

The $L^2$-norm of $\bar{S}$ is estimated first:

By the estimate in Proposition \ref{p401s}, and recall that $\eta>\frac{1}{2}$ we have the following estimate
\begin{eqnarray*}
&&\int_{\R^N}|\mu_1[(\bar{U}^3-u_{Q_1,\cdots,Q_k}^3-U_{Q_{k+1}}^3)]+\beta[\bar{V}^2\bar{U}-v_{Q_1,\cdots,Q_k}^2u_{Q_1,\cdots,Q_k
}-V_{Q_{k+1}}^2U_{Q_{k+1}}]|^2\\
&&+|\mu_2[(\bar{V}^3-v_{Q_1,\cdots,Q_k}^3-V_{Q_{k+1}}^3)]+\beta[\bar{U}^2\bar{V}-u_{Q_1,\cdots,Q_k}^2v_{Q_1,\cdots,Q_k
}-U_{Q_{k+1}}^2V_{Q_{k+1}}]|^2\\
&&\leq Ce^{-\xi\rho}\sum_{i=1}^kw(|Q_{k+1}-Q_i|),
\end{eqnarray*}
and
\begin{eqnarray*}
&&\int_{\R^N}(\delta a U_{Q_{k+1}})^2+(\delta b V_{Q_{k+1}})^2dx \leq C\delta^2\int_{\R^N}a^2U^2_{Q_{k+1}}+b^2V^2_{Q_{k+1}}dx.
\end{eqnarray*}
 So we have
\begin{equation}\label{ss}
\|\bar{S}\|^2_{L^2(\R^N)}\leq C(e^{-\xi\rho}\sum_{i=1}^kw( |Q_{k+1}-Q_i|)+\delta^2\int_{\R^N}a^2U^2_{Q_{k+1}}+b^2V^2_{Q_{k+1}}dx).
\end{equation}

Decompose $\varphi_{k+1}$ as
\begin{equation}\label{decoms}
\varphi_{k+1}=\Psi+\sum_{i=1,\cdots,k+1,l=1,\cdots,K}\ell_{il}\phi_{il}
+\sum_{i=1,\cdots,k+1,j=1,\cdots,N}d_{ij}\bar{Z}_{ij}
\end{equation}
for some $\ell_{il},d_{ij}$ such that
\begin{equation}
\label{345}
\langle \Psi, \phi_{il}\rangle=\langle\Psi, \bar{Z}_{ij}\rangle=0,\ i=1,..., k+1, \ j=1,..., N,\ l=1,\cdots,K.
\end{equation}

Since
\begin{equation}
\varphi_{k+1}=\vect{\phi_{Q_1,\cdots,Q_{k+1}}}{\psi_{Q_1,\cdots,Q_{k+1}}}
-\vect{\phi_{Q_1,\cdots,Q_k}}{\psi_{Q_1,\cdots,Q_k}},
\end{equation}

we have for $i=1,\cdots,k$,
\begin{eqnarray*}
d_{ij}&=&\langle\varphi_{k+1},\bar{Z}_{ij}\rangle+\sum_{l=1,\cdots,K}\ell_{il}\langle\phi_{il},\bar{Z}_{ij}\rangle\\
&=&\langle \vect{\phi_{Q_1,\cdots,Q_{k+1}}}{\psi_{Q_1,\cdots,Q_{k+1}}}
-\vect{\phi_{Q_1,\cdots,Q_k}}{\psi_{Q_1,\cdots,Q_k}},\bar{Z}_{ij}\rangle
+\sum_{l=1,\cdots,K}\ell_{il}\langle\phi_{il},\bar{Z}_{ij}\rangle\\
&=&\sum_{l=1,\cdots,K}\ell_{il}\langle\phi_{il},\bar{Z}_{ij}\rangle\\
&=&e^{-\xi \rho}\sum_{l=1,\cdots,K}\ell_{il}
\end{eqnarray*}
and
\begin{eqnarray*}
d_{k+1,j}&=&\langle\varphi_{k+1},\bar{Z}_{k+1,j}\rangle+
\sum_{l=1,\cdots,K}\ell_{k+1,l}\langle\phi_{k+1,l},\bar{Z}_{k+1,j}\rangle\\
&=&\langle \vect{\phi_{Q_1,\cdots,Q_{k+1}}}{\psi_{Q_1,\cdots,Q_{k+1}}}
-\vect{\phi_{Q_1,\cdots,Q_k}}{\psi_{Q_1,\cdots,Q_k}},\bar{Z}_{k+1,j}\rangle
+\sum_{l=1,\cdots,K}\ell_{k+1,l}\langle\phi_{k+1,l},\bar{Z}_{k+1,j}\rangle\\
&=&-\langle \vect{\phi_{Q_1,\cdots,Q_k}}{\psi_{Q_1,\cdots,Q_k}},\bar{Z}_{k+1,j}\rangle
+\sum_{l=1,\cdots,K}\ell_{k+1,l}\langle\phi_{k+1,l},\bar{Z}_{k+1,j}\rangle\\
&=&-\langle \vect{\phi_{Q_1,\cdots,Q_k}}{\psi_{Q_1,\cdots,Q_k}},\bar{Z}_{k+1,j}\rangle+e^{-\xi \rho}\sum_{l=1,\cdots,K}\ell_{k+1,l}
\end{eqnarray*}
where we have used the orthogonality of the eigenfunctions and the orthogonality conditions satisfied by $(\phi_{Q_1,\cdots,Q_k},\psi_{Q_1,\cdots,Q_k})$ and $(\phi_{Q_1,\cdots,Q_{k+1}},\psi_{Q_1,\cdots,Q_{k+1}})$.
So by Proposition \ref{p401s}, we have
\begin{equation}\label{ds}
\left\{\begin{array}{l}
|d_{ij}|\leq ce^{-\xi \rho}\sum_{l=1,\cdots,K}\ell_{il} \mbox{ for }i=1,\cdots,k,\\
\\
|d_{k+1,j}|\leq ce^{-\xi\rho }\sum_{i=1}^ke^{-\eta|Q_i-Q_{k+1}|}+e^{-\xi \rho }\sum_{l=1,\cdots,K}\ell_{k+1,l}.
\end{array}
\right.
\end{equation}

By (\ref{decoms}), we can rewrite (\ref{varphis}) as
\begin{equation}\label{decom1s}
\bar{L}\Psi+\sum_{i=1,\cdots,k+1,l=1,\cdots,K} \ell_{il}\bar{L}\phi_{il}+\sum_{i=1,\cdots,k+1,j=1,\cdots,N}d_{ij}\bar{L}\bar{Z}_{ij}
=\bar{S}+\sum_{i=1,\cdots,k+1,j=1,\cdots,N}c_{ij}\bar{Z}_{ij}.
\end{equation}

To obtain the estimates for the coefficients $\ell_{il}$ , we use the equation (\ref{decom1s}).

First, multiplying (\ref{decom1s}) by $\phi_{tl}$ and integrating over $\R^N$, we have
\begin{eqnarray}
\label{W345s}
\ell_{il}\langle\bar{L}(\phi_{il}),\phi_{il}\rangle&=&-\sum_{j=1}^N d_{ij}\langle \bar{L}(\bar{Z}_{ij}),\phi_{il}\rangle+\langle\bar{S},\phi_{il}\rangle\\
&+&\sum_{j\neq l}\ell_{ij}\langle\bar{L}(\phi_{ij}),\phi_{il}\rangle\nonumber
\end{eqnarray}
where
\begin{equation}
\left\{\begin{array}{ll}
\label{W346s}
|\langle\bar{S},\phi_{il}\rangle|\leq ce^{-\xi \rho} e^{-\eta|Q_i-Q_{k+1}|}+\delta |\langle\vect{aU_{Q_{k+1}}}{bV_{Q_{k+1}}},\phi_{il} \rangle| \mbox{ for }i=1,\cdots,k\\
\\
|\langle\bar{S},\phi_{k+1,l}\rangle|\leq ce^{-\xi\rho}\sum_{i=1}^k e^{-\eta|Q_i-Q_{k+1}|}+\delta |\langle\vect{aU_{Q_{k+1}}}{bV_{Q_{k+1}}},\phi_{k+1,l} \rangle|.
\end{array}
\right.
\end{equation}

By the equation satisfied by $\phi_l$, we have
\begin{equation}
\label{W234s}
\langle\bar{L}\phi_{ij},\phi_{il}\rangle =  - \delta_{jl}\lambda_l\lambda_j \langle \phi_l,\phi_j\rangle + O(e^{-\xi \, \rho}).
\end{equation}

Combining (\ref{ds}) and (\ref{W345s})-(\ref{W234s}), we have
\begin{equation}\label{cs}
\left\{\begin{array}{ll}
|\ell_{il}|\leq  Ce^{-\xi \rho} e^{-\eta|Q_i-Q_{k+1}|}\\
+\sum_{j=1,\cdots,N}\delta |\langle\vect{aU_{Q_{k+1}}}{bV_{Q_{k+1}}},\phi_{ij} \rangle|+e^{-\xi\rho}\|\Psi\|_{H^1(B_{\frac{\rho}{2}}(Q_i))}, \ i=1,..., k\\
\\
|\ell_{k+1,l}|\leq  Ce^{-\xi \rho}\sum_{i=1}^k e^{-\eta|Q_i-Q_{k+1}|}\\
+\sum_{j=1,\cdots,N}\delta|\langle\vect{aU_{Q_{k+1}}}{bV_{Q_{k+1}}},\phi_{k+1,j} \rangle|+e^{-\xi\rho}\|\Psi\|_{H^1(B_{\frac{\rho}{2}}(Q_{k+1}))},
\end{array}
\right.
\end{equation}
and
\begin{equation}\label{d1s}
\left\{\begin{array}{l}
|d_{ij}|\leq  ce^{-\xi \rho} e^{-\eta|Q_i-Q_{k+1}|}\\
+\sum_{j=1,\cdots,N}\delta|\langle\vect{aU_{Q_{k+1}}}{bV_{Q_{k+1}}},\phi_{ij} \rangle|
 +e^{-\xi\rho}\|\Psi\|_{H^1(B_{\frac{\rho}{2}}(Q_i))}\mbox{ for }i=1,\cdots,k,\\
 \\
|d_{k+1,j}|\leq ce^{-\xi\rho }\sum_{i=1}^ke^{-\eta|Q_i-Q_{k+1}|}\\
+\sum_{j=1,\cdots,N}\delta |\langle\vect{aU_{Q_{k+1}}}{bV_{Q_{k+1}}},\phi_{k+1,j} \rangle|
+e^{-\xi\rho}\|\Psi\|_{H^1(B_{\frac{\rho}{2}}(Q_{k+1}))}.
\end{array}
\right.
\end{equation}

Next let us estimate $\Psi$. Multiplying (\ref{decom1s}) by $\Psi$ and integrating over $\R^N$, we find
\begin{eqnarray}\label{psis}
\langle \bar{L}(\Psi),\Psi\rangle&=&\langle\bar{S} ,\Psi\rangle-\sum_{i=1,\cdots,k+1,j=1,\cdots,N}d_{ij}\langle \bar{L}(\bar{Z}_{ij}),\Psi\rangle\\
&-&\sum_{i=1,\cdots,k+1,l=1,\cdots,K}\ell_{il}\langle\bar{L}\phi_{il},\Psi\rangle.\nonumber
\end{eqnarray}
We claim that
\begin{equation}
-\langle\bar{L}(\Psi),\Psi\rangle\geq c_0 \|\Psi\|^2_{H^1(\R^N)}
\end{equation}
for some constant $c_0>0$.

Since the approximate solution is exponentially decay away from the points $Q_i$, we have
\begin{equation}
\langle\bar{L}(\Psi),\Psi\rangle_{\R^N \backslash  \cup_i B_{\frac{\rho-1}{2}}(Q_i)}
\geq \frac{1}{2}
\int_{\R^N \backslash \cup_i B_{\frac{\rho-1}{2}}(Q_i)}|\nabla \Psi_1|^2+|\Psi_1|^2+|\nabla \Psi_2|^2+|\Psi_2|^2dx.
\end{equation}
Now we only need to prove the above estimates in the domain $\cup_i B_{\frac{\rho-1}{2}}(Q_i)$. We prove it by contradiction. Otherwise, there exists a sequence $\rho_n\to +\infty$, and $Q_i^{(n)}$ such that
\begin{eqnarray*}
&&\int_{B_{\frac{\rho_n-1}{2}}(Q_i^{(n)})}|\nabla \Psi^{(n)}_1|^2+|\Psi^{(n)}_1|^2+|\nabla \Psi^{(n)}_2|^2+|\Psi^{(n)}_2|^2dx=1,\\ &&\langle\bar{L}(\Psi^{(n)}),\Psi^{(n)}\rangle_{B_{\frac{\rho_n-1}{2}}(Q_i^{(n)})}\to 0,\mbox{ as } n\to \infty.
\end{eqnarray*}
Then we can extract from the sequence $\Psi^{(n)}(\cdot-Q_i^{(n)})$ a subsequence which will converge weakly in $H^1(\R^n)$ to $\Psi_\infty$, such that
\begin{eqnarray}\label{phi1s}
&&\int_{\R^N}|\nabla \Psi_{\infty,1}|^2+|\Psi_{\infty,1}|^2-3\mu_1U^2\Psi_{\infty,1}^2+|\nabla \Psi_{\infty,2}|^2+|\Psi_{\infty,2}|^2-3\mu_2V^2\Psi_{\infty,2}^2\nonumber\\
&&-\beta U^2\Psi_{\infty,2}^2-\beta V^2\Psi_{\infty,1}^2-4\beta UV \Psi_{\infty,1}\Psi_{\infty,2}dx=0,
\end{eqnarray}
and
\begin{equation}\label{phi2s}
\langle\Psi_\infty, \phi_i\rangle=\langle\Psi_\infty ,\vect{\partial_{x_j}U}{\partial_{x_j}V}\rangle=0, \mbox{ for }i=1,\cdots,K, \ j=1,\cdots,N.
\end{equation}
From (\ref{phi1s}) and (\ref{phi2s}), we deduce that $\Psi_\infty=0$.

Hence
\begin{equation}
\Psi^{(n)}\rightharpoonup 0 \mbox{ weakly } \mbox{ in } H^1(\R^N).
\end{equation}
So

\begin{eqnarray*}
&&\int_{\R^N}3\mu_1\tilde{U}^2(\Psi^{(n)}_1)^2+2\beta \tilde{V}\bar{U}\Psi^{(n)}_2\Psi^{(n)}_1\\
&&+\beta(\bar{V}+\varphi_{k+1,2})^2(\Psi^{(n)}_1)^2+3\mu_2\hat{V}^2(\Psi^{(n)}_2)^2\\
&&+2\beta \hat{U}\bar{V}\Psi^{(n)}_1\Psi^{(n)}_2+\beta(\bar{U}+\varphi_{k+1,1})^2(\Psi^{(n)}_2)^2dx\to 0  \mbox{ as }n\to \infty.
\end{eqnarray*}

We have
\begin{equation}
\|\Psi^{(n)}\|_{H^1(B_{\frac{\rho_n-1}{2}})}\to 0 \mbox{ as }n\to \infty.
\end{equation}
This contradicts the assumption
\begin{equation}
\|\Psi^{(n)}\|_{H^1}=1.
\end{equation}

So we get that
\begin{equation}\label{psi1s}
-\langle \bar{L}(\Psi),\Psi \rangle \geq c_0 \|\Psi\|^2_{H^1(\R^N)}.
\end{equation}

From (\ref{psis}) and (\ref{psi1s}), we get
\begin{eqnarray}\label{Psis}
&&\|\Psi\|^2_{H^1 (\R^N) }\leq c(\sum_{ij}|d_{ij} | |\langle\bar{L}(\bar{Z}_{ij}), \Psi \rangle| +\sum_{il}|\ell_{il}||\langle\bar{L}\phi_{il},\Psi\rangle|+|\langle \bar{S}, \Psi\rangle | ) \\
&&\leq c(\sum_{ij}|d_{ij}|\|\Psi\|_{H^1 (B_{\frac{\rho}{2}}(Q_i)) }+\sum_{il} |\ell_{il}|\|\Psi\|_{H^1(B_{\frac{\rho}{2}}(Q_i)) }+\|\bar{S}\|_{L^2 (\R^N) }\|\Psi\|_{H^1 (\R^N)}).\nonumber
\end{eqnarray}

From (\ref{cs}) (\ref{d1s}) (\ref{ss}) and (\ref{Psis}), we choose $\eta>\frac{1}{2}$,  we get that
\begin{eqnarray}\label{varphi1s}
\|\varphi_{k+1}\|_{H^1 (\R^N )}&\leq& C(e^{-\xi\rho}\sum_{i=1}^ke^{-\eta |Q_{k+1}-Q_i|}+\delta \int_{\R^n}|a|U_{Q_{k+1}}+|b|V_{Q_{k+1}}dx+\|\bar{S}\|_{L^2})\nonumber\\
&\leq& C(e^{-\xi\rho}\sum_{i=1}^ke^{-\eta|Q_{k+1}-Q_i|}+e^{-\xi  \rho }(\sum_{i=1}^kw(|Q_{k+1}-Q_i|))^{\frac{1}{2}}\\
&+&\delta\int_{\R^N}|a|U_{Q_{k+1}}+|b|V_{Q_{k+1}}dx
+\delta(\int_{\R^N}a^2U^2_{Q_{k+1}}+b^2V^2_{Q_{k+1}}dx)^{\frac{1}{2}}),\nonumber
\end{eqnarray}

Since we choose $\eta>\frac{1}{2}$, we have
\begin{equation}\label{ineqs}
(\sum_{i=1}^ke^{-\eta|Q_i-Q_{k+1}|})^2\leq C\sum_{i=1}^kw(|Q_i-Q_{k+1}|).
\end{equation}

By (\ref{varphi1s}) and (\ref{ineqs}), we thus get that
\begin{eqnarray}
\|\varphi_{k+1}\|_{H^1 (\R^N )}
&\leq& C(e^{-\xi  \rho }(\sum_{i=1}^kw(|Q_{k+1}-Q_i|))^{\frac{1}{2}}
+\delta\int_{\R^N}|a|U_{Q_{k+1}}+|b|V_{Q_{k+1}}dx\nonumber\\
&+&\delta(\int_{\R^N}a^2U^2_{Q_{k+1}}+b^2V^2_{Q_{k+1}}dx)^{\frac{1}{2}}).
\end{eqnarray}

Moreover, from the estimate (\ref{cs}) and (\ref{d1s}), and take into consideration that $\chi_i$ is supported in $B_{\frac{\rho}{2}}(Q_i)$, using holder inequality, we can get a more accurate estimate on $\varphi_{k+1}$,
\begin{eqnarray}\label{varphi2s}
\|\varphi_{k+1}\|_{H^1 (\R^n )}
&\leq& C(e^{-\xi  \rho }(\sum_{i=1}^kw(|Q_{k+1}-Q_i|))^{\frac{1}{2}}\nonumber\\
&+&\delta\sum_{i=1,\cdots,k+1}(\int_{B_{\frac{\rho}{2}}(Q_i)}a^2U^2_{Q_{k+1}}+b^2V^2_{Q_{k+1}}dx)^{\frac{1}{2}}\nonumber\\
&+&\delta(\int_{\R^n}a^2U^2_{Q_{k+1}}+b^2V^2_{Q_{k+1}}dx)^{\frac{1}{2}}).
\end{eqnarray}

\end{proof}

\subsection{The Reduced Problem: A Maximization Procedure}\label{3.3}
\setcounter{equation}{0}
In this section, we study a maximization problem. Fix $\mathbf{Q}_k\in \Lambda_k$, we define a new functional
\begin{equation}
\mathcal{M}(\mathbf{Q}_k)=J(u_{\mathbf{Q}_k},v_{\mathbf{Q}_k}): \Lambda_k \rightarrow \R.
\end{equation}
Define
\begin{equation}
\mathcal{C}_k=\max_{\mathbf{Q}\in\Lambda_k}\{\mathcal{M}(\mathbf{Q}_k)\}.
\end{equation}
Since $\mathcal{M}(\mathbf{Q}_k)$ is continuous in $\mathbf{Q}_k$, the maximization problem has a solution. We will show below that the maximization problem has a solution.

We first prove that the maximum can be attained at finite points for each $\mathcal{C}_k$.

\begin{lemma}\label{lemma601s}
Let assumptions $(H'_1),(H'_2)$ and the assumptions in Proposition \ref{p401s} be satisfied. Then, for all $k$:
\begin{itemize}
\item
There exists $\mathbf{Q}_k=(Q_1,Q_2,\cdots,Q_k)\in \Lambda_k$ such that
\begin{equation}
\mathcal{C}_k=\mathcal{M}(\mathbf{Q}_k);
\end{equation}
\item
There holds
\begin{equation}\label{e501s}
\mathcal{C}_{k+1}>\mathcal{C}_k+I(U,V),
\end{equation}
where $I(U,V)$ is the energy of $(U,V)$,
\begin{eqnarray}
I(U,V)&=&\frac{1}{2}\int_{\R^n}|\nabla U|^2+U^2+|\nabla V|^2+V^2dx\nonumber\\
&-&\frac{1}{4}\int_{\R^n}\mu_1 U^4+\mu_2 V^4dx-\frac{\beta}{2}\int_{\R^n}U^2V^2dx
\end{eqnarray}
\end{itemize}
\end{lemma}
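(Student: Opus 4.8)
The plan is to repeat, essentially line by line, the proof of the scalar Lemma~\ref{lemma601}, with the bump $w$ replaced by the synchronized profile $(U,V)=(\alpha w,\gamma w)$ of (\ref{UV}), the reduction of Proposition~\ref{p401} replaced by Proposition~\ref{p401s}, and the key estimate of Lemma~\ref{lemma501} replaced by Lemma~\ref{lemma501s}. The only algebra specific to the system is the pair of identities forced by (\ref{alpha})--(\ref{UV}), namely $\mu_1\alpha^2+\beta\gamma^2=1$ and $\mu_2\gamma^2+\beta\alpha^2=1$, which give $\mu_1U^3+\beta V^2U=\alpha w^3$ and $\mu_2V^3+\beta U^2V=\gamma w^3$. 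Consequently the interaction of two well-separated copies of $(U,V)$ equals, by the interaction estimate recalled before Lemma~\ref{lemma501} (applied with $f(w)=w^3$), $(\alpha^2+\gamma^2)(\gamma_1+o(1))\,w(|Q_i-Q_{k+1}|)$ with $\gamma_1=\int_{\R^N}w^3e^{-y_1}\,dy>0$, and the $\delta$--potential contribution collapses to $\tfrac{\delta}{2}\int_{\R^N}(\alpha^2a+\gamma^2b)\,w_{Q_{k+1}}^2$, precisely the quantity bounded below in $(H'_2)$.

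Step~1. Expanding $J(u_Q,v_Q)$ about $(U_Q,V_Q)$ and using the bound $\|(\phi_Q,\psi_Q)\|_{**}\le C\delta(\ldots)$ from Proposition~\ref{p401s}, one obtains $J(u_Q,v_Q)\ge I(U,V)+\tfrac14\int_{B_{\rho/2}(Q)}\delta(\alpha^2a+\gamma^2b)\,w_Q^2-O(e^{-\frac32|Q|})$; by the slow decay in $(H'_2)$ the right-hand side exceeds $I(U,V)$ for $|Q|$ large, so $\mathcal{C}_1>I(U,V)$. That $\mathcal{C}_1$ is attained follows as in Step~1 of Lemma~\ref{lemma601}: along a maximizing sequence with $|Q_i|\to\infty$, both the error term and the potential term vanish because $a,b\to 0$, forcing $\mathcal{C}_1\le I(U,V)$, a contradiction.

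Step~2 (induction and the main obstacle). Assume $\mathcal{C}_k$ is attained at some bounded $\bar{\mathbf{Q}}_k=(\bar Q_1,\dots,\bar Q_k)\in\Lambda_k$. For a maximizing sequence of $\mathcal{C}_{k+1}$ with, say, $|Q_{k+1}^{(n)}|\to\infty$, write $(u_{\mathbf{Q}_{k+1}},v_{\mathbf{Q}_{k+1}})=(\bar U,\bar V)+\varphi_{k+1}$ and expand $J$ as in (\ref{eq601})--(\ref{eq604}): the terms quadratic in $\varphi_{k+1}$ and the $\langle\bar S,\varphi_{k+1}\rangle$ term are $O(\|\varphi_{k+1}\|_{H^1}^2+\|\bar S\|_{L^2}\|\varphi_{k+1}\|_{H^1})$, hence by (\ref{keyvars}) absorbed into $O(e^{-\xi\rho}\sum_i w(|Q_{k+1}-Q_i|))$ plus $\delta^2$--terms that tend to $0$; the new genuinely negative term is the bump--interaction, which by the algebra above equals $-\tfrac14(\alpha^2+\gamma^2)\gamma_1\sum_{i=1}^k w(|Q_{k+1}-Q_i|)+O(e^{-\xi\rho}\sum_i w(|Q_{k+1}-Q_i|))<0$, exactly parallel to (\ref{eq604}). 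This yields $\mathcal{C}_{k+1}\le\mathcal{C}_k+I(U,V)$. Conversely, using the competitor $(\bar Q_1,\dots,\bar Q_k,Q_{k+1})$ with $|Q_{k+1}|\ge(\max_i|\bar Q_i|+\ln\delta)/(\eta-\bar\eta)$ for a fixed $\eta>\bar\eta$, the sharper bound (\ref{varphi2s}) for $\varphi_{k+1}$, and the lower bound $\tfrac{\delta}{2}\int(\alpha^2a+\gamma^2b)w_{Q_{k+1}}^2\ge C\delta e^{-\bar\eta|Q_{k+1}|}$ from $(H'_2)$ --- which dominates the interaction error $O(\sum_i e^{-\eta|\bar Q_i-Q_{k+1}|})$ --- one gets $\mathcal{C}_{k+1}\ge J>\mathcal{C}_k+I(U,V)$, contradicting the previous inequality. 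Hence the maximizing sequence is bounded, $\mathcal{C}_{k+1}$ is attained in $\Lambda_{k+1}$, and $\mathcal{C}_{k+1}>\mathcal{C}_k+I(U,V)$, which is (\ref{e501s}). I expect the only real effort to be the bookkeeping in the energy expansion: the nonlinearities $\mu_1u^4$, $\mu_2v^4$, $\beta u^2v^2$ produce several more cross terms than in the scalar case, and the higher Morse index of $(U,V)$ forces the secondary reduction to carry $K$ positive directions rather than one --- but both are already handled by Proposition~\ref{p401s} and Lemma~\ref{lemma501s}, so no new idea is needed.
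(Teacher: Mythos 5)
Your proposal is correct and follows essentially the same route as the paper's proof: Step~1 for $\mathcal{C}_1$ via the slow decay of $\alpha^2a+\gamma^2b$, then the inductive upper/lower bound contradiction using (\ref{eq601s})--(\ref{eq604s}), the competitor point $Q_{k+1}$ with $|Q_{k+1}|\geq(\max_i|\bar Q_i|+\ln\delta)/(\eta-\bar\eta)$, and the refined estimate (\ref{varphi2s}). Your interaction coefficient $(\alpha^2+\gamma^2)\gamma_1$ agrees with the paper's $A\gamma_1$, $A=\mu_1\alpha^4+\mu_2\gamma^4+2\beta\alpha^2\gamma^2$, since the identities $\mu_1\alpha^2+\beta\gamma^2=\mu_2\gamma^2+\beta\alpha^2=1$ give $A=\alpha^2+\gamma^2$.
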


\begin{proof}
The proof is similar to the proof of Lemma \ref{lemma601}. We divide the proof into several steps.

{\bf Step 1:}
$\mathcal{C}_1>I(U,V)$, and $\mathcal{C}_1$ can be attained at finite point. First using standard Liapunov-Schmidt reduction, we have
\begin{equation}
\|(\phi_Q,\psi_Q\|_{H^1}\leq C\|\delta (aU_Q,bV_Q)\|_{L^2}.
\end{equation}

Assume that $|Q|\to \infty$, then
\begin{eqnarray*}
&&J(u_Q,v_Q)=\frac{1}{2}\int_{\R^N}|\nabla (U_Q+\phi_Q)|^2+(1+\delta a)(U_Q+\phi_Q)^2dx\\
&&+\frac{1}{2}\int_{\R^N}|\nabla(V_Q+\psi_Q)|^2+(1+\delta b)(V_Q+\psi_Q)^2dx\\
&&-\frac{1}{4}\int_{\R^N}\mu_1(U_Q+\phi_Q)^4+\mu_2(V_Q+\psi_Q)^4dx-\frac{\beta}{2}
\int_{\R^N}(U_Q+\phi_Q)^2(V_Q+\psi_Q)^2dx\\
&&=I(U_Q,V_Q)+\frac{\delta}{2}\int_{\R^N}aU^2_Q+bV^2_Qdx+\delta^2\|(aU_Q,bV_Q)\|^2_{L^2(\R^N)}\\
&&\geq I(U_Q,V_Q)\\
&&+\frac{1}{4}[\int_{B_{\frac{\rho}{2}}(Q)}\delta (aU_Q^2+bV_Q^2)dx -\sup_{B_{\frac{|Q|}{4}}(0)}(U_Q^2+V_Q^2)\int_{supp (\alpha^2a+\gamma^2 b)^-}\delta(|a|+|b|)dx]\\
&&\geq I(U,V)+\frac{1}{4}\int_{B_{\frac{\rho}{2}}(Q)}\delta (aU_Q^2+bV_Q^2) dx-O(e^{-\frac{3}{2} |Q|}).
\end{eqnarray*}

By the slow decay assumption on the potential $a,b$, we get that
\begin{equation*}
\frac{1}{2}\int_{B_{\frac{\rho}{2}}(Q)}\delta (aU_Q^2+bV_Q^2) dx-O(e^{-\frac{3}{2} |Q|})>0.
\end{equation*}
So
\begin{equation*}
\mathcal{C}_1\geq J(u_Q,v_Q)>I(U,V).
\end{equation*}

Let us prove now that $\mathcal{C}_1$ can be attained at finite point. Let $\{Q_i\}$ be a sequence such that
$\lim_{i\to \infty}\mathcal{M}(Q_i)=\mathcal{C}_1$, and assume that $|Q_i|\to \infty$, by the same argument as above,
\begin{eqnarray*}
J(u_{Q_i},v_{Q_i})&=&I(U,V)+\frac{1}{2}\delta \int_{\R^N}aU_{Q_i}^2+bV_{Q_i}^2dx\\
&+&O(\delta^2\int_{\R^N}a^2U_{Q_i}^2+b^2V_{Q_i}^2dx),
\end{eqnarray*}
as $Q_i\to \infty$, by the decay assumption on $a,b $, we have
\begin{eqnarray*}
\frac{\delta}{2} \int_{\R^N}aU_{Q_i}^2+bV_{Q_i}^2dx+O(\delta^2\int_{\R^N}a^2U_{Q_i}^2+b^2V_{Q_i}^2dx)\to 0\mbox{ as }i\to \infty.
\end{eqnarray*}
Thus,
\begin{equation}
\mathcal{C}_1=\lim_{i \to \infty}J(u_{Q_i},v_{Q_i})\leq I(U,V) \mbox{ as }i\to \infty.
\end{equation}
Contradiction! So $\mathcal{C}_1$ can be attained at finite point.

{\bf Step 2:}
Assume that there exists $\mathbf{Q}_k=(\bar{Q}_1,\cdots,\bar{Q}_k)\in \Lambda_k$ such that $\mathcal{C}_k=\mathcal{M}(\mathbf{Q}_k)$, and we denote the solution by $(u_{\bar{Q}_1,\cdots, \bar{Q}_k},v_{\bar{Q}_1,\cdots,\bar{Q}_k})$.

Next, we prove that there exists $(Q_1,\cdots,Q_{k+1})\in \Lambda_{k+1}$ such that $\mathcal{C}_{k+1}$ can be attained.

Let $((Q_1^{(n)},\cdots,Q_{k+1}^{(n)}))_n$ be a sequence such that
\begin{equation}\label{ck1s}
\mathcal{C}_{k+1}=\lim_{n\to \infty }\mathcal{M}(Q_1^{(n)},\cdots,Q_{k+1}^{(n)}).
\end{equation}

We claim that $(Q_1^{(n)},\cdots,Q_{k+1}^{(n)})$ is bounded. We prove it by contradiction. Wlog, we assume that $|Q_{k+1}^{(n)}|\to \infty$ as $n\to \infty$. In the following we omit the index $n$ for simplicity.

\begin{eqnarray}\label{eq601s}
&&J(u_{Q_1,\cdots,Q_{k+1}},v_{Q_1,\cdots,Q_{k+1}})\\
&&=J(\vect{u_{Q_1,\cdots,Q_k}}{v_{Q_1,\cdots,Q_k}}+\vect{U_{Q_{k+1}}}{V_{Q_{k+1}}}+\vect{\varphi_{k+1,1}}{\varphi_{k+1,2}})\nonumber\\
&&=J(\vect{u_{Q_1,\cdots,Q_k}}{v_{Q_1,\cdots,Q_k}}+\vect{U_{Q_{k+1}}}{V_{Q_{k+1}}})\nonumber\\
&&+\|\bar{S}(u_{Q_1,\cdots,Q_k}+U_{Q_{k+1}},v_{Q_1,\cdots,Q_k}+V_{Q_{k+1}})
\|_{L^2}\|\varphi_{k+1}\|_{H^1}\nonumber
\end{eqnarray}
\begin{eqnarray}
&&+\sum_{i=1,\cdots,k,j=1,\cdots,N}c_{ij}\langle \bar{Z}_{ij},\varphi_{k+1}\rangle+O(\|\varphi_{k+1}\|_{H^1}^2)\nonumber\\
&&=J(\vect{u_{Q_1,\cdots,Q_k}}{v_{Q_1,\cdots,Q_k}}+\vect{U_{Q_{k+1}}}{V_{Q_{k+1}}})\nonumber\\
&&+O(e^{-\xi\rho}\sum_{i=1}^kw( |Q_{k+1}- Q_i|)+\delta^2\int_{\R^N}a^2U_{Q_{k+1}}^2+b^2V_{Q_{k+1}}^2dx\nonumber\\
&&+\delta^2(\int_{\R^N}|a|U_{Q_{k+1}}+|b|V_{Q_{k+1}}dx)^2,\nonumber
\end{eqnarray}
where we use the condition that $\langle \bar{Z}_{ij},\varphi_{k+1}\rangle=0$ for $i=1,\cdots,k$,
and

\begin{eqnarray}\label{eq602s}
&&J(\vect{u_{Q_1,\cdots,Q_k}}{v_{Q_1,\cdots,Q_k}}+\vect{U_{Q_{k+1}}}{V_{Q_{k+1}}})\\
&&\leq \mathcal{C}_k+I(U_{Q_{k+1}},V_{Q_{k+1}})+\frac{\delta}{2}\int_{\R^N}aU_{Q_{k_1}}^2+bV_{Q_{k+1}}^2dx\nonumber\\
&&+\sum_{i=1,\cdots,k,j=1,\cdots,N}c_{ij}\langle \bar{Z}_{ij},\vect{U_{Q_{k+1}}}{V_{Q_{k+1}}}\rangle\nonumber\\
&&-\int_{\R^N}\mu_1U_{Q_{k+1}}^3u_{Q_1,\cdots,Q_k}+\mu_2V_{Q_{k+1}}^3v_{Q_1,\cdots,Q_k}dx\nonumber\\
&&-\beta\int_{\R^N}U_{Q_{k+1}}^2V_{Q_{k+1}}v_{Q_1,\cdots,Q_k}+V_{Q_{k+1}}^2U_{Q_{k+1}}u_{Q_1,\cdots,Q_k}dx\nonumber\\
&&+O(e^{-\xi\rho}\sum_{i=1}^kw( |Q_{k+1}- Q_i|)).\nonumber
\end{eqnarray}

By estimates (\ref{est2s}) and that the definition of $\bar{Z}_{ij}$,  we have
\begin{eqnarray}\label{eq603s}
|\sum_{i=1}^kc_{ij}\langle \bar{Z}_{ij},\vect{U_{Q_{k+1}}}{V_{Q_{k+1}}}\rangle|\leq ce^{-\xi \rho}\sum_{i=1}^kw(|Q_{k+1}-Q_i|).
\end{eqnarray}

Similar as the estimate (\ref{eq6031}), we can get that
\begin{eqnarray*}
&&\int_{\R^N}\mu_1U_{Q_{k+1}}^3\phi_{Q_1,\cdots,Q_k}+\mu_2V_{Q_{k+1}}^3\psi_{Q_1,\cdots,Q_k}dx\\
&&-\beta\int_{\R^N}U_{Q_{k+1}}^2V_{Q_{k+1}}\psi_{Q_1,\cdots,Q_k}+V_{Q_{k+1}}^2U_{Q_{k+1}}
\phi_{Q_1,\cdots,Q_k}dx\\
\end{eqnarray*}
\begin{eqnarray*}
&&\leq C(\delta e^{-\xi \rho}\int_{\R^N}\sum_{i=1}^ke^{-\eta|x-Q_i|}(aU_{Q_{k+1}}+bV_{Q_{k+1}})dx\\
&&+\delta\int_{\R^n}aU_{\mathbf{Q}_k}U_{Q_{k+1}}+bV_{\mathbf{Q}_k}V_{Q_{k+1}}dx+e^{-\xi \rho }\sum_{i=1}^kw( |Q_{k+1}-Q_i|)).
\end{eqnarray*}

So we have

\begin{eqnarray}\label{eq604s}
&&\int_{\R^N}\mu_1U_{Q_{k+1}}^3u_{Q_1,\cdots,Q_k}+\mu_2V_{Q_{k+1}}^3v_{Q_1,\cdots,Q_k}dx\\
&&+\beta\int_{\R^N}U_{Q_{k+1}}^2V_{Q_{k+1}}v_{Q_1,\cdots,Q_k}+V_{Q_{k+1}}^2U_{Q_{k+1}}u_{Q_1,\cdots,Q_k}dx\nonumber\\
&&=\int_{\R^N}\mu_1U_{Q_{k+1}}^3U_{Q_1,\cdots,Q_k}+\mu_2V_{Q_{k+1}}^3V_{Q_1,\cdots,Q_k}dx\nonumber\\
&&+\beta\int_{\R^N}U_{Q_{k+1}}^2V_{Q_{k+1}}V_{Q_1,\cdots,Q_k}+V_{Q_{k+1}}^2U_{Q_{k+1}}U_{Q_1,\cdots,Q_k}dx
\nonumber\\
&&+O(\delta e^{-\xi \rho}\int_{\R^n}\sum_{i=1}^ke^{-\eta|x-Q_i|}(aU_{Q_{k+1}}+bV_{Q_{k+1}})dx\nonumber\\
&&+\delta\int_{\R^N}aU_{\mathbf{Q}_k}U_{Q_{k+1}}
+bV_{\mathbf{Q}_k}V_{Q_{k+1}}dx+e^{-\xi \rho }\sum_{i=1}^kw( |Q_{k+1}-Q_i|))\nonumber\\
&&\geq \frac{1}{4}A\gamma_1 \sum_{i=1}^kw(|Q_{k+1}-Q_i|)+O(e^{-\xi \rho }\sum_{i=1}^kw( |Q_{k+1}-Q_i|))\nonumber\\
&&+O(\delta e^{-\xi \rho}\int_{\R^N}\sum_{i=1}^ke^{-\eta|x-Q_i|}(aU_{Q_{k+1}}+bV_{Q_{k+1}})dx\nonumber\\
&&+\delta\int_{\R^N}aU_{\mathbf{Q}_k}U_{Q_{k+1}}
+bV_{\mathbf{Q}_k}V_{Q_{k+1}}dx)\nonumber
\end{eqnarray}
where $\gamma_1$ is defined in (\ref{gamma1}) with $f(t)=t^3$ and $A=\mu_1\alpha^4+\mu_2\gamma^4+2\beta\alpha^2\gamma^2>0$.

So by (\ref{eq601s}), (\ref{eq602s}), (\ref{eq603s}) and (\ref{eq604s}), we obtain
\begin{eqnarray}\label{eq605s}
&&J(u_{Q_1,\cdots,Q_{k+1}},v_{Q_1,\cdots,Q_{k+1}})\\
&&\leq \mathcal{C}_k+I(U,V)+\frac{\delta}{2}\int_{\R^n} a U^2_{Q_{k+1}}+b V_{Q_{k+1}}^2dx\nonumber\\
&&-\frac{1}{4}A\gamma_1 \sum_{i=1}^kw(|Q_{k+1}-Q_i|)+O(e^{-\xi \rho }\sum_{i=1}^kw( |Q_{k+1}-Q_i|))\nonumber
\end{eqnarray}
\begin{eqnarray}
&&+O(\delta e^{-\xi \rho}\int_{\R^N}\sum_{i=1}^ke^{-\eta|x-Q_i|}(aU_{Q_{k+1}}+bV_{Q_{k+1}})dx+\delta\int_{\R^N}aU_{\mathbf{Q}_k}U_{Q_{k+1}}
+bV_{\mathbf{Q}_k}V_{Q_{k+1}}dx)\nonumber\\
&&+\delta^2\int_{\R^N}a^2U_{Q_{k+1}}^2+b^2V_{Q_{k+1}}^2
+\delta^2(\int_{\R^N}|a|U_{Q_{k+1}}+|b|V_{Q_{k+1}})^2dx.\nonumber
\end{eqnarray}

By the assumption that $|Q_{k+1}^{(n)}|\to \infty$,
\begin{eqnarray*}
&& \delta e^{-\xi\rho}\int_{\R^N}\sum_{i=1}^ke^{-\eta|x-Q_i|}(aU_{Q_{k+1}}+bV_{Q_{k+1}})dx\\
&&+\delta \int_{\R^N}aU_{\mathbf{Q}_k}U_{Q_{k+1}}+bV_{\mathbf{Q}_k}V_{Q_{k+1}}dx\\
&&+\delta^2\int_{\R^N}a^2U_{Q_{k+1}}^2+b^2V_{Q_{k+1}}^2
+\delta^2(\int_{\R^N}|a|U_{Q_{k+1}}+|b|V_{Q_{k+1}})^2
\to 0 \mbox{ as }n \to \infty ,
\end{eqnarray*}
and
 \begin{equation}
 -\frac{1}{4}A\gamma_1 \sum_{i=1}^kw(|Q_{k+1}-Q_i|)+O(e^{-\xi \rho}\sum_{i=1}^kw( |Q_{k+1}-Q_i|))<0.
  \end{equation}

Combining (\ref{ck1s}), (\ref{eq605s}) and the above estimates, we have
\begin{equation}\label{leqs}
\mathcal{C}_{k+1}\leq \mathcal{C}_k+I(U,V).
\end{equation}

On the other hand, since by the assumption, $\mathcal{C}_k$ can be attained at $(\bar{Q}_1,\cdots,\bar{Q}_k)$, so there exists other point $Q_{k+1}$ which is far away from the $k$ points which will be determined later. Next let us consider the solution concentrated at the points $(\bar{Q}_1,\cdots, \bar{Q_{k}}, Q_{k+1})$. We denote the solution by $(u_{\bar{Q}_1,\cdots,\bar{Q}_k,Q_{k+1}},v_{\bar{Q}_1,\cdots,\bar{Q}_k,Q_{k+1}})$. By similar argument as the above, using the estimate (\ref{varphi2s}) instead of (\ref{keyvars}), we have the following estimates:
\begin{eqnarray}
&&J(u_{\bar{Q}_1,\cdots,\bar{Q}_k, Q_{k+1}},v_{\bar{Q}_1,\cdots,\bar{Q}_k,Q_{k+1}})\\
&&=J(u_{\bar{Q}_1,\cdots,\bar{Q}_k},v_{\bar{Q}_1,\cdots,\bar{Q}_k})+I(U,V)\nonumber\\
&&+\frac{\delta}{2}\int_{\R^N}aU^2_{Q_{k+1}}+bV^2_{Q_{k+1}}dx+O(\sum_{i=1}^kw( |Q_{k+1}-\bar{Q}_i|))\nonumber\\
&&+O(\delta e^{-\xi \rho}\int_{\R^N}\sum_{i=1}^ke^{-\eta|x-Q_i|}(aU_{Q_{k+1}}+bV_{Q_{k+1}})dx
+\delta \int_{\R^N}a U_{\bar{\mathbf{Q}}_k}U_{Q_{k+1}}+b V_{\bar{\mathbf{Q}}_k}V_{Q_{k+1}}dx\nonumber\\
&&+\delta^2 (\int_{\R^N}a^2U_{Q_{k+1}}^2+b^2 V_{Q_{k+1}}^2 dx+(\sum_{i=1,\cdots,k+1}(\int_{B_{\frac{\rho}{2}}(Q_i)}a^2U^2_{Q_{k+1}}
+b^2V^2_{Q_{k+1}}dx)^{\frac{1}{2}})^2)).\nonumber
\end{eqnarray}

By the slow decay assumption of $a,b$ at infinity, i.e. $\lim_{|x|\to \infty}(\alpha^2a+\gamma^2b)e^{\bar{\eta}|x|}=+\infty$ as $|x|\to \infty$, for some $\bar{\eta}<1$, we can further choose $\eta>\bar{\eta}$, and choose $Q_{k+1}$ such that
\begin{equation}
|Q_{k+1}|\geq \frac{\max_{i=1}^k|\bar{Q}_i|+\ln \delta}{\eta-\bar{\eta}}.
\end{equation}
This implies that
\begin{eqnarray}
&&\frac{\delta}{2}\int_{\R^n}aU^2_{Q_{k+1}}+bV^2_{Q_{k+1}}dx+O(\sum_{i=1}^kw( |Q_{k+1}-\bar{Q}_i|))\\
&&+O(\delta e^{-\xi \rho}\int_{\R^n}\sum_{i=1}^ke^{-\eta|x-Q_i|}(aU_{Q_{k+1}}+bV_{Q_{k+1}})dx
+\delta \int_{\R^n}a U_{\bar{\mathbf{Q}}_k}U_{Q_{k+1}}+b V_{\bar{\mathbf{Q}}_k}V_{Q_{k+1}}dx\nonumber\\
&&+\delta^2 (\int_{\R^n}a^2U_{Q_{k+1}}^2+b^2 V_{Q_{k+1}}^2 dx+(\sum_{i=1,\cdots,k+1}(\int_{B_{\frac{\rho}{2}}(Q_i)}a^2U^2_{Q_{k+1}}+b^2V^2_{Q_{k+1}}dx)^{\frac{1}{2}})^2))\nonumber\\
&&\geq Ce^{-\bar{\eta}|Q_{k+1}|}-O(\sum_{i=1}^ke^{-\eta|Q_i-Q_{k+1}|})>0\nonumber,
\end{eqnarray}
so
\begin{equation}\label{geqs}
\mathcal{C}_{k+1}\geq J(u_{\bar{Q}_1,\cdots,\bar{Q}_k, Q_{k+1}},v_{\bar{Q}_1,\cdots,\bar{Q}_k, Q_{k+1}})>\mathcal{C}_k+I(U,V).
\end{equation}
Combining (\ref{geqs}) and (\ref{leqs}), one get that
\begin{equation}
\mathcal{C}_k+I(U,V)<\mathcal{C}_{k+1}\leq \mathcal{C}_k+I(U,V).
\end{equation}
We have reached a  contradiction with (\ref{leqs}). So we get that $\mathcal{C}_{k+1}$ can be attained at finite points in $\Lambda_{k+1}$.

Moreover, from the proof above, we can get a relation between $\mathcal{C}_{k+1}$ and $\mathcal{C}_k$:
\begin{equation}
\mathcal{C}_{k+1}\geq \mathcal{C}_k+I(U,V).
\end{equation}
\end{proof}

The next following Proposition excludes boundary maximization.

\begin{proposition}\label{p602s}
The maximization problem
\begin{equation}
\max_{\mathbf{Q}_k\in \bar{\Lambda}_k} \mathcal{M}(\mathbf{Q}_k)
\end{equation}
has a solution $\mathbf{Q}_k \in \Lambda_k^\circ$, i.e., the interior of $\Lambda_k$.
\end{proposition}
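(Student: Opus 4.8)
The plan is to argue by contradiction, in exact parallel with the proof of Proposition~\ref{p602}. The case $k=1$ is vacuous, since $\Lambda_1=\R^N$ has empty boundary, so assume $k\ge 2$ and suppose the maximum of $\mathcal{M}$ over $\bar{\Lambda}_k$ were attained at a point $\mathbf{Q}_k=(\bar{Q}_1,\cdots,\bar{Q}_k)\in\partial\Lambda_k$. By definition of $\Lambda_k$ there is then a pair of indices realizing the minimal separation, $|\bar{Q}_i-\bar{Q}_j|=\rho$; relabelling, take $j=k$. I would freeze $(\bar{Q}_1,\cdots,\bar{Q}_{k-1})$ and view $\bar{Q}_k$ as one extra bump glued onto that $(k-1)$-configuration, so that the energy splitting from the proof of Lemma~\ref{lemma601s}---the chain of estimates $(\ref{eq601s})$--$(\ref{eq605s})$, now describing the passage from step $k-1$ to step $k$---applies verbatim. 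It gives
\begin{align*}
\mathcal{C}_k
&=J\!\left(u_{\bar{Q}_1,\cdots,\bar{Q}_k},v_{\bar{Q}_1,\cdots,\bar{Q}_k}\right)\\
&\le \mathcal{C}_{k-1}+I(U,V)+\frac{\delta}{2}\int_{\R^N}\bigl(aU_{\bar{Q}_k}^2+bV_{\bar{Q}_k}^2\bigr)\,dx\\
&\quad-\frac14 A\gamma_1\sum_{i=1}^{k-1}w(|\bar{Q}_k-\bar{Q}_i|)+O\!\Bigl(e^{-\xi\rho}\sum_{i=1}^{k-1}w(|\bar{Q}_k-\bar{Q}_i|)\Bigr),
\end{align*}
where $A=\mu_1\alpha^4+\mu_2\gamma^4+2\beta\alpha^2\gamma^2>0$ and $\gamma_1$ is the constant of $(\ref{gamma1})$ with $f(t)=t^3$. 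Since $a,b$ are bounded and $U,V\in L^2(\R^N)$, the potential term is $O(\delta)$, with constant independent of $k$ and of $\mathbf{Q}_k$.

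Next I would invoke the packing estimate from the proof of Proposition~\ref{p602}: since the balls $B_{\rho/2}(\bar{Q}_i)$ are pairwise disjoint, at most $c_N:=6^N$ of the points $\bar{Q}_i$ lie within distance $\sim\rho$ of $\bar{Q}_k$, and grouping the remaining points into successive annular shells of width $\rho$ around $\bar{Q}_k$ yields the geometric bound
\[
\sum_{i\ne k}w(|\bar{Q}_k-\bar{Q}_i|)\ \le\ Ce^{-\rho}\sum_{m\ge 0}e^{\,m\ln c_N-m\rho/2}\ \le\ Ce^{-\rho},
\]
valid once $c_N<e^{\rho/2}$, i.e. for $\rho$ large, with $C$ independent of $k$. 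Retaining only the index with $|\bar{Q}_i-\bar{Q}_k|=\rho$ in the negative sum, inserting the bound above into the $O(e^{-\xi\rho}\cdot)$ term, and recalling $\delta<e^{-2\rho}$ while $w(\rho)\gtrsim\rho^{-(N-1)/2}e^{-\rho}$, I obtain
\[
\mathcal{C}_k\ \le\ \mathcal{C}_{k-1}+I(U,V)+c\delta-\frac14 A\gamma_1\,w(\rho)+O\!\bigl(e^{-(1+\xi)\rho}\bigr)\ <\ \mathcal{C}_{k-1}+I(U,V)
\]
for $\rho$ large enough (hence $\delta$ small enough). This contradicts the strict inequality $\mathcal{C}_k>\mathcal{C}_{k-1}+I(U,V)$ of Lemma~\ref{lemma601s}. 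Hence no maximizer can lie on $\partial\Lambda_k$, and the proposition follows.

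I do not expect a genuinely new obstacle: the analytic content is carried entirely by the energy expansion of Lemma~\ref{lemma601s}, and what remains is bookkeeping. The two points to get right are (i) that the interaction coefficient $A\gamma_1$ is strictly positive on the admissible range of $\beta$---which is precisely why the synchronized profile $(U,V)=(\alpha w,\gamma w)$ with the constants $(\ref{alpha})$ was introduced---and (ii) that the shell-counting makes the error sum $O(e^{-(1+\xi)\rho})$ with all constants uniform in $k$, so that for a fixed large $\rho$ (and correspondingly small $\delta$) the gap $-\tfrac14 A\gamma_1 w(\rho)$ genuinely dominates every remaining term. Note that, in contrast to the ``spike escaping to infinity'' argument inside Lemma~\ref{lemma601s}, here the decay rates in $(H'_1)$--$(H'_2)$ are irrelevant: mere boundedness of $a,b$ suffices, since the collision occurs at a finite location.
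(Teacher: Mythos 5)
Your argument is correct and follows essentially the same route as the paper: contradiction from a boundary maximizer with $|\bar{Q}_i-\bar{Q}_k|=\rho$, the energy splitting of Lemma \ref{lemma601s} applied to the $(k-1)$-configuration plus one bump, the shell-packing bound from Section \ref{2.3}, and the comparison $c\delta - \tfrac14 A\gamma_1 w(\rho) + O(e^{-(1+\xi)\rho})<0$ contradicting $\mathcal{C}_k>\mathcal{C}_{k-1}+I(U,V)$. You even correct the paper's typographical slip (it writes $\mathcal{C}_{k+1}$ where $\mathcal{C}_k$ is meant) and rightly observe that only boundedness of $a,b$ is used here.
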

\begin{proof}
We prove it by contradiction again.
If $\mathbf{Q}_k=(\bar{Q}_1,\cdots,\bar{Q}_k) \in \partial \Lambda_k$, then there exists $(i,j)$ such that $|\bar{Q}_i-\bar{Q}_j|=\rho$.  Without loss of generality, we assume $(i,j)=(i,k)$. Then following the estimates in (\ref{eq601s}), (\ref{eq602s}), (\ref{eq603s}) and (\ref{eq604s}), we have

\begin{eqnarray}
\mathcal{C}_{k+1}&&=J(u_{\bar{Q}_1,\cdots,\bar{Q}_k},v_{\bar{Q}_1,\cdots,\bar{Q}_k})\\
&&\leq \mathcal{C}_{k-1}+I(U,V)+\frac{\delta}{2}\int_{\R^N} a U^2_{Q_k}+b V_{Q_k}dx\nonumber\\
&&-\frac{1}{4}A\gamma_1 \sum_{i=1}^{k-1}w(|Q_k-Q_i|)+O(e^{-\xi \rho}\sum_{i=1}^{k-1}w( |Q_k-Q_i|))+O(\delta)\nonumber\\
&&\leq \mathcal{C}_{k-1}+I(U,V)\nonumber\\
&&+C\delta-\frac{1}{4}A\gamma_1 \sum_{i=1}^{k-1}w(|Q_k-Q_i|)+O(e^{-\xi \rho}\sum_{i=1}^{k-1}w( |Q_k-Q_i|)).\nonumber
\end{eqnarray}

Similar to Section \ref{2.3}, by the definition of the configuration set, we have
\begin{eqnarray}
\mathcal{C}_{k+1}
&&\leq \mathcal{C}_{k-1}+I(U,V)+c\delta-\frac{1}{8}\gamma_1 w(\rho)+O(e^{-(1+\xi) \rho})\\
&&< \mathcal{C}_{k-1}+I(U,V).\nonumber
\end{eqnarray}

This is a contradiction with Lemma \ref{lemma601s}.
\end{proof}

\subsection{Proof of Theorem \ref{theo2}}\label{3.4}
\setcounter{equation}{0}
In this section, we apply the results in Section \ref{3.1}, \ref{3.2} and \ref{3.3} to prove Theorem \ref{theo2}.

\medskip

\noindent
{\it Proof of Theorem \ref{theo2}:} By Proposition \ref{p401s} in Section \ref{3.1}, there exists $\rho_0$ such that for $\rho>\rho_0$, we have $C^1$ map which, to any $\mathbf{Q}^\circ \in \Lambda_k$, associates $\phi_{\mathbf{Q}^\circ}$ such that
\begin{equation}
S(\vect{U_{\mathbf{Q}^\circ}+\phi_{\mathbf{Q}^\circ}}{V_{\mathbf{Q}^\circ}+\psi_{\mathbf{Q}^\circ}})=\sum_{i=1,\cdots,k,j=1,\cdots,n}c_{ij}\bar{Z}_{ij},\ \
\langle \vect{\phi_{\mathbf{Q}^\circ}}{\psi_{\mathbf{Q}^\circ}},\bar{Z}_{ij}\rangle=0,
\end{equation}
for some constants $\{c_{ij}\}\in \R^{kN}$.

From Proposition \ref{p602s} in Section \ref{3.2}, there is a $\mathbf{Q}\in \Lambda_k^\circ$ that achieves the maximum for the maximization problem in Proposition \ref{p602s}. Let $\vect{u_{\mathbf{Q}^\circ}}{v_{\mathbf{Q}^\circ}}=\vect{U_{\mathbf{Q}^\circ}}{V_{\mathbf{Q}^\circ}}
+\vect{\phi_{\mathbf{Q}^\circ}}{\psi_{\mathbf{Q}^\circ}}$. Then we have
\begin{equation}
D_{Q_{ij}}|_{Q_i=Q_i^\circ}\mathcal{M}(\mathbf{Q}^\circ)=0,\  \ i=1,\cdots,k, \ \ j=1,\cdots,N.
\end{equation}
Similar to the proof of Theorem \ref{theo1},
\begin{equation}\label{e601s}
\sum_{i=1,\cdots,k, \ j=1,\cdots,N}c_{ij}\int_{\R^n} \bar{Z}_{ij,1}\frac{\partial (U_{\mathbf{Q}}+\phi_{\mathbf{Q}})}{\partial Q_{sl}}|_{Q_s=Q_s^\circ}+\bar{Z}_{ij,2}\frac{\partial (V_{\mathbf{Q}}+\psi_{\mathbf{Q}})}{\partial Q_{sl}}|_{Q_s=Q_s^\circ}dx=0,
\end{equation}
for $s=1,\cdots,k, l=1,\cdots,N$.

We can get that (\ref{e601s}) is a diagonally dominant system for $c_{sl}$. So $c_{sl}=0$ for $s=1,\cdots,k, l=1,\cdots,N$. Hence $(u_{\mathbf{Q}^\circ},v_{\mathbf{Q}^\circ})$ is a solution of (\ref{p-2}).

\end{document}